\DeclareMathOperator*{\argmin}{arg\,min}
\DeclareMathOperator{\sgn}{sgn}
\DeclareMathOperator{\dom}{dom}
\newcommand{\bvec}[1]{\bar{\bm{#1}}}
\newcommand{\tvec}[1]{\tilde{\bm{#1}}}
\newcommand{\grad}[1]{#1'}
\renewcommand{\exp}[1]{\operatorname{exp}\left({#1}\right)}
\newcommand{\tcol}[1]{\multicolumn{2}{c|}{#1}}
\newcommand{\bd}[1]{\textbf{#1}}
\newcommand{\inrange}[1]{\in \{1, ... , #1\}}
\newcommand{\TheTitle}{Gradient Methods with Memory for Minimizing Composite Functions}
\newcommand{\TheAuthors}{Mihai I. Florea}
\title{\TheTitle}
\author{Mihai I. Florea\thanks{Department of Mathematical Engineering, Catholic University of Louvain, Belgium; \newline{} Department of Electronics and Computers, Transilvania University of Brasov, Romania. \newline{} E-mail: \email{mihai.florea@uclouvain.be}}}
\begin{document}

\maketitle

\begin{abstract}
The recently introduced Gradient Methods with Memory use a subset of the past oracle information to create an accurate model of the objective function that enables them to surpass the Gradient Method in practical performance. The model introduces an overhead that is substantial on all problems but the smooth unconstrained ones. In this work, we introduce several Gradient Methods with Memory that can solve composite problems efficiently, including unconstrained problems with non-smooth objectives. The auxiliary problem at each iteration still cannot be solved exactly but we show how to alter the model and how to initialize the auxiliary problem solver to ensure that this inexactness does not degrade the convergence guarantees. Moreover, we dynamically increase the convergence guarantees as to provably surpass those of their memory-less counterparts. These properties are preserved when applying acceleration and the containment of inexactness further prevents error accumulation. Our methods are able to estimate key geometry parameters to attain state-of-the-art worst-case rates on many important subclasses of composite problems, where the objective smooth part satisfies a strong convexity condition or a relaxation thereof. In particular, we formulate a near-optimal restart strategy applicable to optimization methods with sublinear convergence guarantees of any order. We support the theoretical results with simulations.
\end{abstract}

\begin{keywords}
gradient method, bundle, piece-wise linear model, acceleration, adaptive, restart, composite problems
\end{keywords}

\begin{AMS}
68Q25, 65Y20, 65B99, 90C25
\end{AMS}

\section{Introduction}

The class of large-scale composite problems can be used to model an outstandingly vast array of applications, spanning multiple unrelated areas of mathematics, statistics, computer science, signal processing, optimal control, economics, business administration etc. The term \emph{composite} refers to the structure of the objective in these problems, which can be expressed as the sum between a smooth but expensive to compute function and a simple but possibly non-differentiable and/or non-finite regularizer. The smooth part is generally a data fidelity term that contains most if not all the problem data collected from observations. The regularizer usually imposes certain characteristics on the solution that are not evident from the observations, such as sparsity within a certain space. \emph{Large-scale} denotes a number of variables of the order of millions or more. Under these conditions, the only tractable operations are the computation of function values and gradient-like quantities (first-order operations), and these only at specific points. Second order information such as the Hessian is considered to be unavailable due to limitations in both memory and processing power.

The small number of tractable operations justifies the application of the black-box model to this problem class. Under the black-box assumption, optimization algorithms are only able to attain information on the problem by means of oracle functions, which have the same syntax for all problems within the class. One important advantage of this paradigm lies in the possibility of designing optimization algorithms that can be deployed without modification to solve the entire range of their applications.

Generally, black-box first-order methods fall into two categories: fixed-point schemes such as the Gradient Method and accelerated algorithms such Nesterov's Fast Gradient Method.\footnote{For a detailed exposition see, e.g., \cite{ref_014}. In this work we do not assume the existence of a dual problem formulation and thus do not consider primal-dual schemes.} The former use gradient-type information obtained at the latest test point while the latter aggregate past oracle information in the form of an estimate function or a similar construct, with a memory footprint comparable to the output of a single gradient call.

Considering that oracle information is expensive to obtain, it is worth inquiring how to most effectively utilize the information already collected, beyond an aggregate the size of one oracle call. The recently introduced Gradient Methods with Memory (GMM) attempt to address the issue of information scarcity by maintaining a subset of past gradient calls (also referred to as a bundle) in memory~\cite{ref_018}. New iterates are obtained by minimizing a regularized piece-wise linear model of the objective based on this bundle.

The complexity of the bundle implies that GMMs are nested schemes and we need to distinguish between the outer iterations of the main scheme used to solve the outer (original) problem and the inner (auxiliary) problem solved by the inner iterations of the inner scheme. The structure of the inner problem increases the outer performance without sacrificing generality but the bundle is also responsible for two weaknesses in the original GMMs. First, the inner problem cannot be solved exactly for non-trivial bundle sizes. This inexactness has a negative impact on the convergence guarantees and leads to error accumulation when acceleration is applied~\cite{ref_004}. Second, the inner problem is itself a large-scale problem in most cases and its difficulty may even approach in certain situations the one of the outer problem.

The Exact Gradient Methods with Memory have been recently proposed to address the effects of inexactness~\cite{ref_005}. By employing a two-stage minimization procedure, the errors induced by the bundle are removed from the convergence analysis. This strategy also prevents the acceleration induced accumulation of errors. Moreover, the two-stage system retains the generality of the original GMMs, particularly the ability to handle an extended class of composite problems defined in terms of relative smoothness. However, to ensure generality, the inner problem overhead remains high for many constrained problems or problems with non-differentiable objectives.

In this work, we construct Gradient Methods with Memory for the most common class of composite problems, where the smooth part has a Lipschitz gradient defined in terms of the standard Euclidean norm. Our methods are able not only to contain the inexactness in the model but actually remove it from the convergence analysis altogether. As a direct consequence, they can be accelerated without error accumulation. And most importantly, we redesign the model to ensure that the inner problem has an anti-dual in the form of a Quadratic Program (QP) whose dimensionality is given by the bundle size without any dependence on the outer problem size. Combined with an appropriate model update strategy, we show that the total overhead induced by the model becomes negligible even for constrained problems with non-smooth objectives.

We propose both fixed-point schemes as well as accelerated methods. Unlike the original GMMs, where inexactness degrades the convergence guarantees, we show that by carefully initializing the inner optimization schemes, not only can we preserve the guarantees but, using the dynamic adjustment procedure, our methods provably exceed the corresponding traditional Gradient Methods in terms of worst-case rates. We construct two Accelerated Gradient Methods with Memory, one for non-strongly convex objectives and a generalization that can take into account a known strong convexity parameter. Applying restarts to the former allows it to utilize the broader quadratic functional growth (QFG) property~\cite{ref_011}. We further show how the restart technique can estimate an unknown growth parameter. The benefits of restarting are not limited to our methods, and we formulate a framework that is general enough to be utilized by optimization methods with convergence guarantees of any order. All our methods are by design equipped with a dynamic Lipschitz constant estimation (line-search) procedure.

The theoretical findings are supported by simulations on a collection of synthetic problems that covers a wide range of fields.

\subsection{Organization}

In Section~\ref{label_005} we propose a fixed-point Gradient Method with Memory. We discuss how the model can be constructed and the inner problem can be initialized to negate the effects of inexactness and to ensure that the model overhead becomes negligible for any composite problem. We study the convergence of this algorithm on the entire class as well as on strongly-convex problems and on the more general class of problems with quadratic functional growth. In Section~\ref{label_076} we combine acceleration with our model to obtain an Accelerated Gradient Method with Memory (AGMM) that not only prevents the accumulation of errors but also increases the convergence guarantees dynamically at runtime. We formulate a version that converges faster than the state-of-the-art on strongly convex problems with known strong convexity parameter. For objectives endowed with the QFG property, we discuss how AGMM can be restarted to attain a linear rate. We consider both the case when the parameter known and when it is not. For estimating an unknown QFG parameter, we propose an general adaptive restart scheme. In Section~\ref{label_171} we test our methods on a collection of synthetic problems spanning a wide range of composite problems. We discuss the importance of our theoretical and practical results in Section~\ref{label_185}.

\subsection{Problem setup}

We consider the class of composite problems, given by

\begin{equation} \label{label_001}
\min_{\bm{x} \in \mathbb{R}^n} \{ F(\bm{x}) \overset{\operatorname{def}}{=} f(\bm{x}) + \psi(\bm{x}) \},
\end{equation}
where the function $f$ is full-domain differentiable with Lipschitz gradient and the regularizer $\Psi$ is a proper closed extended value convex function.
The Lipschitz gradient property of $f$ is defined in terms of the standard Euclidean norm $\| . \|_2$ with constant $L_f$ as
\begin{equation} \label{label_002}
f(\bm{y}) \leq f(\bm{x}) + \langle \grad{f}(\bm{x}), \bm{y} - \bm{x} \rangle + \frac{L_f}{2} \| \bm{x} - \bm{y} \|_2^2, \quad \bm{x},\bm{y} \in \mathbb{R}^n .
\end{equation}
The value of $L_f$ need not be known.

The regularizer $\Psi$ is considered proximable (or simple) in the sense that the proximal operator,
defined as
\begin{equation}
\operatorname{prox}_{\tau \Psi}(\bm{x}) \overset{\operatorname{def}}{=} \displaystyle \argmin_{\bm{z} \in \mathbb{R}^n}\left\{\Psi(\bm{z}) + \frac{1}{2\tau}\|\bm{z} - \bm{x}\|_2^2 \right\},
\end{equation}
can be computed with complexity $\mathcal{O}(n)$ for all $\bm{x} \in \mathbb{R}^n$ and $\tau > 0$.

We also assume that the optimization problem in \eqref{label_001} has a non-empty set $X^*$ of optimal points and that the regularizer $\Psi$ is infinite outside the feasible set $X$, which is thus given by $X \overset{\operatorname{def}}{=} \left\{ \bm{x} \ \middle| \ \Psi(\bm{x}) < + \infty \right\}$. We define the projection onto the optimal set and the distance to the optimal set, respectively, as
\begin{equation}
\bm{o}(\bm{x}) \overset{\operatorname{def}}{=} \displaystyle \argmin_{\bm{z} \in X^*}\|\bm{z} - \bm{x}\|_2, \quad d(\bm{x}) \overset{\operatorname{def}}{=} \displaystyle \| \bm{x} - \bm{o}(\bm{x}) \|_2, \quad \bm{x} \in \mathbb{R}^n .
\end{equation}

We also consider two important subclasses of composite problems. The first is the class where the objective displays the quadratic functional growth (QFG) property. Our definition here is a slight generalization over the original one provided in \cite{ref_011}. For a growth parameter $\mu \geq 0$, the QFG property is given by
\begin{equation} \label{label_003}
F(\bm{x}) \geq F^* + \frac{\mu}{2}d(\bm{x})^2, \quad \bm{x} \in \mathbb{R}^n .
\end{equation}
In the second subclass, the objective components possess strong convexity parameters. A function $\phi$ that is sub-differentiable on its entire domain has a strong convexity parameter $\mu_{\phi}$ if
\begin{equation} \label{label_004}
\phi(\bm{y}) \geq \phi(\bm{x}) + \langle \bm{\xi}, \bm{y} - \bm{x} \rangle + \frac{\mu_{\phi}}{2} \| \bm{y} - \bm{x} \|_2^2, \quad \bm{x}, \bm{y} \in \dom \phi ,
\end{equation}
where $\bm{\xi}$ is any subgradient of $\phi$ at $\bm{x}$, i.e., $\bm{\xi} \in \delta F(\bm{x})$. Note that every convex function has a strong convexity parameter of zero. We designate the objective $F$ as strongly convex if both $f$ and $\Psi$ have strong convexity parameters $\mu_f \geq 0$ and $\mu_{\Psi} \geq 0$, respectively, with $\mu \overset{\operatorname{def}}{=} \mu_f + \mu_{\Psi}$. When regular convex functions need to be excluded, strong convexity entails $\mu > 0$. Note that a strongly convex $F$ satisfies the QFG condition with growth parameter $\mu$ (see also \cite{ref_011} for proof).

\section{An Efficient Gradient Method with Memory} \label{label_005}

\subsection{Motivation}

The original Gradient Method with Memory proposed in \cite{ref_018} generates a new iterate $\bm{x}_{+}$ from a previous iterate $\bvec{x}$ in the following way:
\begin{equation} \label{label_006}
\bm{x}_{+} = \displaystyle \argmin_{\bm{y} \in \mathbb{R}^n} \left\{ l_{f}(\bm{y}) + \psi(\bm{y}) + \frac{L}{2} \| \bm{y} - \bvec{x} \|_2^2 \right\} .
\end{equation}
Here $L$ is a parameter dynamically estimated by the algorithm using a line-search procedure and $l_{f}$ is a piece-wise linear approximation of the function $f$ using a collection $\mathcal{Z}$ of test points. The number of test points is $| \mathcal{Z} | = m$ and $\mathcal{Z}$ must include $\bm{z}_1 = \bvec{x}$. The piece-wise linear model $l_{f}$ is given by
\begin{equation} \label{label_007}
l_{f}(\bm{y}) \overset{\operatorname{def}}{=} \displaystyle \max_{\bm{z}_i \in \mathcal{Z}} \{ f(\bm{z}_i) + \langle \grad{f}(\bm{z}_i), \bm{y} - \bm{z}_i \rangle \} , \quad \bm{y} \in \mathbb{R}^n .
\end{equation}
For brevity, we introduce the quantities $\bm{H}_{f}(\mathcal{Z}) \in \mathbb{R}^m$ and $\bm{G}_{f}(\mathcal{Z}) \in \mathbb{R}^n \times \mathbb{R}^m$ as
\begin{align}
(\bm{H}_{f}(\mathcal{Z}))_i &= f(\bm{z}_i) - \langle \grad{f}(\bm{z}_i), \bm{z}_i \rangle, \quad i \in \{ 1, ..., m \}, \\
\bm{G}_{f}(\mathcal{Z}) &= (\grad{f}(\bm{z}_1), \grad{f}(\bm{z}_2), ..., \grad{f}(\bm{z}_m)).
\end{align}
The model $l_{f}$ can be equivalently expressed as
\begin{equation} \label{label_008}
l_{f}(\bm{y}) = \displaystyle \max_{\bm{\lambda} \in \Delta_m} \langle \bm{\lambda}, \bm{H}_{f}(\mathcal{Z}) + \bm{G}_{f}^T(\mathcal{Z}) \bm{y} \rangle, \quad \bm{y} \in \mathbb{R}^n ,
\end{equation}
where $\Delta_m$ is the $m$-dimensional simplex, given by:
\begin{equation}
\Delta_m = \left\{ \bm{\lambda} \in \mathbb{R}^m \ \middle\vert \ \displaystyle \sum_{i = 1}^{m} \bm{\lambda}_i = 1 \mbox{ and } \bm{\lambda}_i \geq 0, \ i \in \{ 1, ..., m \} \right\}.
\end{equation}
The line-search condition that needs to be satisfied by the parameter $L$ is
\begin{equation} \label{label_009}
f(\bm{x}_{+}) \leq l_{f}(\bm{x}_{+}) + \frac{L}{2} \| \bm{x}_{+} - \bvec{x} \|_2^2 .
\end{equation}
From the Lipschitz gradient property of $f$ in \eqref{label_002} we have that the condition in \eqref{label_009} holds in the worst-case for $L > L_f$.

The iterate generation in \eqref{label_006} is a non-trivial optimization problem, partly because it has the same dimensionality as the outer problem \eqref{label_001}. Its anti-dual (the dual taken with the minus sign) has only $m$ variables, gathered in the vector $\bm{\lambda}$, and is written as
\begin{equation} \label{label_010}
\displaystyle \min_{\bm{\lambda} \in \Delta_m} \left\{ \alpha_L(\bm{\lambda}) \overset{\operatorname{def}}{=} \Phi_L(L \bvec{x} - \bm{G}_{f}(\mathcal{Z}) \bm{\lambda}) - \langle \bm{\lambda}, \bm{H}_{f}(\mathcal{Z}) \rangle \right\} ,
\end{equation}
where
\begin{equation} \label{label_011}
\Phi_{L}(\bm{x}) \overset{\operatorname{def}}{=} \displaystyle \max_{\bm{y} \in \mathbb{R}^n} \left\{ \langle \bm{x}, \bm{y} \rangle - \psi(\bm{y}) - \frac{L}{2} \|\bm{y}\|_2^2 \right\} .
\end{equation}
Lemma~2 in \cite{ref_018} establishes that $\Phi_{L}$ is differentiable with Lipschitz gradient for all $L > 0$ with Lipschitz constant $1 / L$ and its derivative is given by
\begin{equation} \label{label_012}
\grad{\Phi_{L}}(\bm{x}) = \operatorname{prox}_{\frac{1}{L} \Psi}\left(\frac{1}{L} \bm{x}\right), \quad \bm{x} \in \mathbb{R}^m .
\end{equation}
Consequently, the problem in \eqref{label_010} has a differentiable objective, with the gradient given by
\begin{equation} \label{label_013}
\alpha'_L(\bm{\lambda}) = -\bm{H}_{f}(\mathcal{Z}) - \bm{G}_{f}^T(\mathcal{Z}) \operatorname{prox}_{\frac{1}{L} \Psi}\left(\bvec{x} - \frac{1}{L} \bm{G}_{f}(\mathcal{Z}) \bm{\lambda}\right).
\end{equation}
The black-box model stipulates that the inner problem \eqref{label_010} can only be solved by first-order methods. Any such method needs to compute at every \emph{inner} iteration the gradient in \eqref{label_013}, which entails $\mathcal{O}(m n)$ floating point operations (FLOP) due to the reliance on the product $\bm{G}_{f}(\mathcal{Z}) \bm{\lambda}$. However, when the objective is smooth unconstrained, which equates to $\Psi = 0$, the gradient in \eqref{label_013} can be refactored to compute only $\bm{Q}_f(\mathcal{Z}) \bm{\lambda}$, where $\bm{Q}_f(\mathcal{Z}) \overset{\operatorname{def}}{=} \bm{G}_{f}^T(\mathcal{Z}) \bm{G}_{f}(\mathcal{Z})$ is an $m \times m$ matrix. Further, the standard Euclidean norm assumption allows the efficient update of $\bm{Q}_f(\mathcal{Z})$ only \emph{once} for every outer iteration, incurring $\mathcal{O}(m n)$ FLOPs, with every inner iteration requiring only $\mathcal{O}(m^2)$ FLOPs. The question arises whether we can benefit from this computational optimization for the common class of composite problems considered in this work.

\subsection{Turning to the composite gradient mapping}

We are able to answer this question affirmatively by relying on the \emph{composite gradient mapping} mathematical construct introduced by Nesterov in \cite{ref_014}. The composite gradient mapping is defined as follows
\begin{equation} \label{label_014}
g_L(\bm{x}) \overset{\operatorname{def}}{=} L (\bm{x} - T_L(\bm{x})), \quad \bm{x} \in \mathbb{R}^n,
\end{equation}
where
\begin{equation} \label{label_015}
T_L(\bm{x}) \overset{\operatorname{def}}{=} \argmin_{\bm{y} \in \mathbb{R}^n} \left\{ f(\bm{x}) + \langle \grad{f}(\bm{x}), \bm{y} - \bm{x} \rangle + \frac{L}{2}\| \bm{y} - \bm{x} \|_2^2 + \Psi(\bm{y}) \right\} ,
\quad \bm{x} \in \mathbb{R}^n ,
\end{equation}
and $L$ is a positive parameter.
The proximal gradient operator $T_L$ can be expressed in terms of the oracle functions as
\begin{equation} \label{label_016}
T_{L}(\bm{x}) = \operatorname{prox}_{\frac{1}{L} \Psi} \left(\bm{x} - \frac{1}{L} \grad{f}(\bm{x}) \right), \ \bm{x} \in \mathbb{R}^n, \ L > 0.
\end{equation}
Based on \eqref{label_016} we can define a step size for the algorithm, given by $1 / L$.

From the first-order optimality conditions in \eqref{label_015} we have that there exists a function $\bm{\xi}_L$ such that $\bm{\xi}_L(\bm{x})$ is a subgradient of $\Psi$ at $T_L(\bm{x})$ for all $\bm{x} \in \mathbb{R}^n$ and
\begin{equation} \label{label_017}
g_L(\bm{x}) = \grad{f}(\bm{x}) + \bm{\xi}_L(\bm{x}), \quad \bm{x} \in \mathbb{R}^n .
\end{equation}
From \eqref{label_017} we see that the composite gradient mapping is able to embed information from both $f$ and $\Psi$. In the sequel we show how to remove the regularizer $\Psi$ from the anti-dual objective and thus bring \eqref{label_010} to a much simpler and less computationally demanding form. Key to our endeavor is the following well-known result.
\begin{lemma} \label{label_018}
If for a certain $\bm{x} \in \mathbb{R}^n$ and $L > 0$ the following local upper bound rule holds
\begin{equation} \label{label_019}
f(T_L(\bm{x})) \leq f(\bm{x}) + \langle \grad{f}(\bm{x}), T_L(\bm{x}) - \bm{x} \rangle + \frac{L}{2}\| T_L(\bm{x}) - \bm{x} \|_2^2 ,
\end{equation}
then the composite objective $F$ is globally lower bounded as
\begin{equation} \label{label_020}
F(\bm{y}) \geq F(T_L(\bm{x})) + \frac{1}{2 L} \| g_L(\bm{x})\|_2^2 + \langle g_L(\bm{x}), \bm{y} - \bm{x} \rangle, \quad \bm{y} \in \mathbb{R}^n .
\end{equation}
\end{lemma}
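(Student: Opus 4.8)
The plan is to combine the convexity of the two objective components with the local upper bound \eqref{label_019}, using the first-order characterization \eqref{label_017} of the composite gradient mapping as the bridge. Since both $f$ and $\Psi$ are convex (their strong convexity parameters are nonnegative), I would first write down the two supporting-hyperplane inequalities: convexity of $f$ at $\bm{x}$ gives $f(\bm{y}) \geq f(\bm{x}) + \langle \grad{f}(\bm{x}), \bm{y} - \bm{x}\rangle$ for all $\bm{y}$, while the fact that $\bm{\xi}_L(\bm{x})$ is a subgradient of $\Psi$ at $T_L(\bm{x})$ gives $\Psi(\bm{y}) \geq \Psi(T_L(\bm{x})) + \langle \bm{\xi}_L(\bm{x}), \bm{y} - T_L(\bm{x})\rangle$. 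Adding these two bounds yields a lower bound on $F(\bm{y}) = f(\bm{y}) + \Psi(\bm{y})$ expressed through $\grad{f}(\bm{x})$ and $\bm{\xi}_L(\bm{x})$.

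Next, I would eliminate the two separate linear functionals in favor of the single vector $g_L(\bm{x})$. Substituting $\bm{\xi}_L(\bm{x}) = g_L(\bm{x}) - \grad{f}(\bm{x})$ from \eqref{label_017} and collecting terms, the cross contributions in $\grad{f}(\bm{x})$ telescope to $\langle \grad{f}(\bm{x}), T_L(\bm{x}) - \bm{x}\rangle + \langle g_L(\bm{x}), \bm{y} - T_L(\bm{x})\rangle$, so that the entire $\bm{y}$-dependence is now carried by $g_L(\bm{x})$ alone.

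With the linear-in-$\bm{y}$ part isolated, I would invoke the hypothesis \eqref{label_019} to replace $f(\bm{x}) + \langle \grad{f}(\bm{x}), T_L(\bm{x}) - \bm{x}\rangle$ by its lower bound $f(T_L(\bm{x})) - \frac{L}{2}\| T_L(\bm{x}) - \bm{x} \|_2^2$. Combining $f(T_L(\bm{x})) + \Psi(T_L(\bm{x})) = F(T_L(\bm{x}))$ then produces the objective value at the proximal point. Finally, using the defining identity $T_L(\bm{x}) - \bm{x} = -\frac{1}{L} g_L(\bm{x})$ implicit in \eqref{label_014}, I would rewrite $\frac{L}{2}\| T_L(\bm{x}) - \bm{x} \|_2^2 = \frac{1}{2L}\| g_L(\bm{x}) \|_2^2$ and convert $\langle g_L(\bm{x}), \bm{y} - T_L(\bm{x})\rangle = \langle g_L(\bm{x}), \bm{y} - \bm{x}\rangle + \frac{1}{L}\| g_L(\bm{x}) \|_2^2$. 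After collecting the $\| g_L(\bm{x}) \|_2^2$ terms, the coefficient lands exactly on $+\frac{1}{2L}\| g_L(\bm{x}) \|_2^2$, which is \eqref{label_020}.

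I expect no serious obstacle: this is a standard property of the composite gradient mapping and the argument is essentially an identity-chasing exercise once the three ingredients (convexity of $f$, the subgradient inequality for $\Psi$, and the upper bound \eqref{label_019}) are assembled. The only point requiring care is the sign bookkeeping when trading $\langle g_L(\bm{x}), \bm{y} - T_L(\bm{x})\rangle$ for $\langle g_L(\bm{x}), \bm{y} - \bm{x}\rangle$: this is precisely where the $-\frac{1}{2L}\| g_L(\bm{x}) \|_2^2$ coming from the quadratic term is overcompensated into a $+\frac{1}{2L}\| g_L(\bm{x}) \|_2^2$, and a momentary slip in the constant there would silently corrupt the stated bound.
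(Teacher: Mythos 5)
Your argument is correct: the chain (convexity of $f$ at $\bm{x}$) $+$ (subgradient inequality for $\Psi$ at $T_L(\bm{x})$ via the optimality certificate $\bm{\xi}_L(\bm{x}) = g_L(\bm{x}) - \grad{f}(\bm{x})$) $+$ (the descent condition \eqref{label_019}) assembles exactly to \eqref{label_020}, and your sign bookkeeping in trading $\langle g_L(\bm{x}), \bm{y} - T_L(\bm{x})\rangle$ for $\langle g_L(\bm{x}), \bm{y} - \bm{x}\rangle + \frac{1}{L}\|g_L(\bm{x})\|_2^2$ is right, with the net coefficient landing on $+\frac{1}{2L}$. The paper itself gives no in-line proof and merely cites external references, but the cited argument (Lemma~1 of \cite{ref_007} with $\mu = 0$) is precisely this standard derivation, so your proposal coincides with the intended one.
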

\begin{proof}
See Appendix C in \cite{ref_008} as well as the proof of Lemma~1 in \cite{ref_007} with the condition $\mu = 0$ applied.
\end{proof}
The lower bound \eqref{label_020} in Lemma~\ref{label_018} suggests an alternative means of storing the past oracle information, this time using both a set $\mathcal{Z}$ of past test points $\bm{z}_i$ as well as a corresponding collection $\mathcal{L}$ of $L(\bm{z}_i)$ parameters, all satisfying \eqref{label_019}. We first define for every $i \in \{ 1, ..., m \}$ the composite gradient mappings $g_F(\bm{z}_i) \overset{\operatorname{def}}{=} g_{L(\bm{z}_i)}(\bm{z}_i)$ and scalars
\begin{equation} \label{label_021}
h_F(\bm{z}_i) \overset{\operatorname{def}}{=} F(T_{L(\bm{z}_i)}(\bm{z}_i)) + \frac{1}{2 L(\bm{z}_i)} \| g_{L(\bm{z}_i)}(\bm{z}_i)\|_2^2 - \langle g_{L(\bm{z}_i)}(\bm{z}_i), \bm{z}_i \rangle ,
\end{equation}
which we concatenate into $\bm{H}_{F}(\mathcal{Z}) \in \mathbb{R}^m$ and $\bm{G}_{F}(\mathcal{Z}) \in \mathbb{R}^n \times \mathbb{R}^m$ as
\begin{align}
(\bm{H}_F(\mathcal{Z}))_i &\overset{\operatorname{def}}{=} h_F(\bm{z}_i), \quad i \in \{ 1, ..., m \}, \label{label_022}\\
\bm{G}_F(\mathcal{Z}) &\overset{\operatorname{def}}{=} ( g_{L(\bm{z}_1)}(\bm{z}_1), g_{L(\bm{z}_2)}(\bm{z}_2), ..., g_{L(\bm{z}_m)}(\bm{z}_m) ) \label{label_023}.
\end{align}

This new model can be used to generate a piece-wise linear global lower bound on the entire composite objective, not just $f$, as
\begin{equation} \label{label_024}
F(\bm{y}) \geq l_{F}(\bm{y}) \overset{\operatorname{def}}{=} \max \left\{ \bm{H}_F(\mathcal{Z}) + \bm{G}_F(\mathcal{Z})^T \bm{y} \right \}, \quad \bm{y} \in \mathbb{R}^n .
\end{equation}
Here $\max$ denotes the vector maximum.

We can thus formulate a new iterate generation procedure by substituting the global lower bound $l_{f} + \Psi$ on $F$ in \eqref{label_006} with $l_{F}$ in \eqref{label_024}. The new procedure is then given by
\begin{equation} \label{label_025}
\bm{x}_{+} = \displaystyle \argmin_{\bm{y} \in \mathbb{R}^n} \left\{ l_{F}(\bm{y}) + \frac{1}{2 a} \| \bm{y} - \bvec{x} \|_2^2 \right\} ,
\end{equation}
where $a = 1 / L$ is the algorithm step size. Note that $a$ is obtained separately from the values $L(\bm{z}_i)$ needed for populating the model.

\subsection{Efficient main iterate generation} \label{label_026}

The obvious starting point for an efficient method is \eqref{label_025}. However, at every iteration we choose a more general model made up of $\bm{H} \in \mathbb{R}^p$ and $\bm{G} \in \mathbb{R}^n \times \mathbb{R}^p$. The size $p$ can vary from one iteration to another, but it is upper bounded by the memory capacity $m$ imposed on the algorithm at startup. The components of the model can either be drawn directly from $\bm{H}_F(\mathcal{Z})$ and $\bm{G}_F(\mathcal{Z})$, can be any convex combination of those components or can even include problem specific global lower bound information, such as objective non-negativity. The advantage of using a convex combination of lower bounds will become more evident when we formulate an accelerated method.
At this point, the only condition we impose on the model for our method is that $(\bm{H})_1 = h_F(\bm{z}_1)$ and $(\bm{G})_1 = g_F(\bm{z}_1)$. Recall that by convention, the previous iterate $\bvec{x}$ is placed first in our model and denoted by $\bm{z}_1$.

The iterate generation procedure in \eqref{label_025} becomes
\begin{equation} \label{label_027}
\bm{x}_{+} = \displaystyle \argmin_{\bm{y} \in \mathbb{R}^n} \left\{ \max_{\bm{\lambda} \in \Delta_p} \langle \bm{\lambda} , \bm{H} + \bm{G}^T \bm{y} \rangle + \frac{1}{2 a} \| \bm{y} - \bvec{x} \|_2^2 \right\} .
\end{equation}
This problem is identical to the inner problem of the original Gradient Method with Memory under the special case of unconstrained minimization under the standard Euclidean setup. The solution is therefore given by
\begin{equation}
\bm{x}_{+} = \bvec{x} - a \bm{G} \bvec{\lambda},
\end{equation}
where, for non-trivial $p$, $\bvec{\lambda}$ is an approximate solution of the anti-dual problem
\begin{equation} \label{label_028}
\min_{\bm{\lambda} \in \Delta_p} \left\{d_{\bm{Q},\bm{B}}(\bm{\lambda}, a) \overset{\operatorname{def}}{=} \frac{a}{2} \langle \bm{\lambda}, \bm{Q} \bm{\lambda} \rangle - \langle \bm{B}, \bm{\lambda} \rangle \right\},
\end{equation}
where $\bm{Q} \overset{\operatorname{def}}{=} \bm{G}^T \bm{G}$ and $\bm{B} = \bm{G}^T \bvec{x} + \bm{H}$.
The anti-dual problem in \eqref{label_028} is a Quadratic Program (QP) with dimensionality $p$ that can be solved with minimal computational overhead (see~\cite{ref_018,ref_005} for a detailed discussion with computational results). We denote a generic QP solver that produces an approximate solution to problem~\eqref{label_028} as $\bm{\lambda}_{\bm{Q}, \bm{B}}(a)$. We place no requirements on the quality of this solution.

It may appear at first glance that altering the lower bounds would suffice in designing a new method. This is not the case because the worst-case results pertaining to line-search in \eqref{label_009} do not apply to $l_{F}$ and we cannot adopt the line-search procedure of the original Gradient Method with Memory. However, the nature of our model provides a simple alternative: searching for a step size $a$ that satisfies $F(\bm{x}_+) \leq -d_{\bm{Q},\bm{B}}(\bar{\bm{\lambda}}, a)$ and reverting to a fallback iterate $\bm{x}_{-}$ if the step size decreases below a threshold $\tau_{-}$. The iterate generation, which simultaneously produces the new iterate $\bm{x}_{+}$ and the corresponding step size $\tau$, becomes
\begin{equation} \label{label_029}
(\bm{x}_{+}, \tau) = \left\{\begin{array}{ll}
(\bvec{x} - a \bm{G} \bvec{\lambda}, a), &\mbox{if }
F(\bvec{x} - a \bm{G} \bvec{\lambda}) \leq -d_{\bm{Q},\bm{B}}(\bar{\bm{\lambda}}, a) \mbox{ and } a > \tau_{-} \\
(\bm{x}_{-}, \tau_{-}), & \mbox{otherwise}
\end{array}\right.,
\end{equation}
where $ \bar{\bm{\lambda}} = \bm{\lambda}_{\bm{Q}, \bm{B}}(a)$. In the current context, the obvious values of $\bm{x}_{-}$ and $\tau_{-}$ are $\bm{z}_1 = T_{L(\bvec{x})}(\bvec{x})$ and $1 / L(\bvec{x})$, respectively. However, in the case of strongly convex objectives, we can formulate bounds that are tighter than in Lemma~\ref{label_018}. Their structure slightly alters the fallback step sizes $\tau_{-}$, as we shall see in Subsubsection~\ref{label_051}.

Now we are ready to proceed with the stipulation of our method.

\subsection{Building the method}

When designing each iterative scheme we use the following convention. Every iteration is indexed by $k \geq 0$ with $k = 0$ marking the first iteration. Quantities already existing at the beginning of iteration $k$ are indexed with $k$ while those that are produced during iteration $k$ are indexed by $k + 1$.

We distinguish within a single outer iteration two independent line-search procedures. The first, the Lipschitz search, produces a Lipschitz constant estimate $L_{k + 1} > 0$ and a new test point $\tvec{x}_{k + 1} = T_{L_{k + 1}}(\bm{x}_k)$. The second procedure, the step size search, computes the actual step size $a_{k + 1} > 0$ that is taken by the algorithm. The strength of the piece-wise linear model lies in the ability of the algorithm to produce in practice step sizes $a_{k + 1}$ far exceeding the value $\tau_{k + 1} = 1 / L_{k + 1}$ of the Gradient Method. Another characteristic is that the second procedure may fail, in which case we can simply revert to $\bm{x}_{-}= \tvec{x}_{k + 1}$ and $\tau_{-} = \tau_{k + 1}$ while retaining the convergence gains and guarantees obtained at previous iterations.

For simplicity, we use an Armijo-style backtracking search for each procedure, both with the same parameters $r_u > 1$ and $r_d \leq 1$. To optimize memory usage, our method updates at every iteration a model of size $p_{k + 1} = \min\{k + 1, m\}$ with variables $\bm{H}_{k + 1} \in \mathbb{R}^{m_{k + 1}}$ and $\bm{G}_{k + 1} \in \mathbb{R}^n \times \mathbb{R}^{p_{k + 1}}$. The resulting efficient Gradient Method with Memory is listed in Algorithm~\ref{label_030}.

\begin{algorithm}[h!]
\caption{An Efficient Gradient Method with Memory for Composite Problems}
\label{label_030}
\begin{algorithmic}[1]
\STATE \bd{Input:} $\bm{x}_0 \in \mathbb{R}^n$, $L_0 > 0$, $r_u > 1 \geq r_d > 0$, $T > 0$.
\FOR{$k = 0,\ldots{},T-1$}
\STATE $L_{k + 1} := r_d L_k$

\LOOP
\STATE $\tvec{x}_{k + 1} := T_{L_{k + 1}}(\bm{x}_k)$
\IF {$f(\tvec{x}_{k + 1}) \leq f(\bm{x}_k) + \langle \grad{f}(\bm{x}_k), \tvec{x}_{k + 1} - \bm{x}_k \rangle + \frac{L_{k + 1}}{2} \| \tvec{x}_{k + 1} - \bm{x}_k \|_2^2$}
\STATE Break from loop
\ELSE
\STATE $L_{k + 1} := r_u L_{k + 1}$
\ENDIF
\ENDLOOP
\STATE $\tau_{k + 1} = 1 / L_{k + 1}$ \label{label_031}
\STATE $\bvec{g}_{k + 1} = L_{k + 1} (\bm{x}_k - \tvec{x}_{k + 1})$
\STATE $\bar{h}_{k + 1} = F(\tvec{x}_{k + 1}) + \frac{1}{2 L_{k + 1}} \|\bvec{g}_{k + 1}\|_2^2 - \langle \bvec{g}_{k + 1}, \bm{x}_k \rangle$ \label{label_032}
\STATE Generate $\bm{G}_{k + 1}$ and $\bm{H}_{k + 1}$ to include $\bvec{g}_{k + 1}$ and $\bar{h}_{k + 1}$
\STATE Generate $\bm{Q}_{k + 1}$ to equal $\bm{G}_{k + 1}^T \bm{G}_{k + 1}$
\STATE $\bm{B}_{k + 1} = \bm{H}_{k + 1} + \bm{G}_{k + 1}^T \bm{x}_k$
\STATE $a_{k + 1} = a_k / r_d$
\LOOP
\IF{$a_{k + 1} < \tau_{k + 1}$}
\STATE $a_{k + 1} = \tau_{k + 1}$
\STATE $\bm{x}_{k + 1} = \tvec{x}_{k + 1}$
\STATE Break from loop
\ENDIF
\STATE $\bm{\lambda}_{k + 1} := \bm{\lambda}_{\bm{Q}_{k + 1}, \bm{B}_{k + 1}}(a_{k + 1})$
\STATE $\bm{x}_{k + 1} := \bm{x}_k - a_{k + 1} \bm{G}_{k + 1} \bm{\lambda}_{k + 1}$
\IF {$F(\bm{x}_{k + 1}) \leq \langle \bm{B}_{k + 1}, \bm{\lambda}_{k + 1} \rangle - \frac{a_{k + 1}}{2} \langle \bm{\lambda}_{k + 1}, \bm{Q}_{k + 1} \bm{\lambda}_{k + 1} \rangle$}
\STATE Break from loop

\ENDIF
\STATE $a_{k + 1} = a_k / r_u$
\ENDLOOP
\ENDFOR
\end{algorithmic}
\end{algorithm}

\subsection{Convergence analysis}

Before we analyze the convergence of our method we need the following result.

\begin{theorem} \label{label_033}
Let the point $\bm{x}_+$ and the step size $\tau$ be generated by one iteration of our efficient Gradient Method with Memory as in \eqref{label_029} with $\bm{x}_{-} = T_{L(\bvec{x})}(\bvec{x})$ and $\tau_{-} = 1 / L(\bvec{x})$. Then, for any $\bm{y} \in \mathbb{R}^n$
\begin{equation}
\frac{1}{2}\| \bm{x}_+ - \bm{y} \|_2^2 \leq \frac{1}{2}\| \bvec{x} - \bm{y} \|_2^2 + \tau \left( F(\bm{y}) - F(\bm{x}_+) \right) .
\end{equation}
\end{theorem}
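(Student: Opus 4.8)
The plan is to treat the two branches of the iterate generation rule \eqref{label_029} separately, after first collapsing both to a common affine lower bound on $F$. In either branch the new iterate has the form $\bm{x}_+ = \bvec{x} - \tau \bm{v}$ for a suitable direction $\bm{v}$: in the fallback branch $\bm{v} = g_{L(\bvec{x})}(\bvec{x})$ with $\tau = 1/L(\bvec{x})$, and in the successful branch $\bm{v} = \bm{G}\bvec{\lambda}$ with $\tau = a$. Substituting $\bm{x}_+ = \bvec{x} - \tau \bm{v}$ into the left-hand side and expanding the square, the claimed inequality is seen to be equivalent, after cancelling $\frac{1}{2}\|\bvec{x}-\bm{y}\|_2^2$ and dividing by $\tau > 0$ (which is positive in both branches), to the single reduced inequality
\[
F(\bm{x}_+) \le F(\bm{y}) + \langle \bm{v}, \bvec{x} - \bm{y} \rangle - \tfrac{\tau}{2}\|\bm{v}\|_2^2 .
\]
It therefore suffices to verify this reduced inequality in each of the two cases.

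For the fallback branch I would note that $\bm{x}_+ = T_{L(\bvec{x})}(\bvec{x})$ and $\bm{v} = g_{L(\bvec{x})}(\bvec{x})$ indeed satisfy $\bm{x}_+ = \bvec{x} - \tau\bm{v}$ by the definition \eqref{label_014} of the composite gradient mapping, and that the local upper bound rule \eqref{label_019} holds at $\bvec{x}$ by construction of the Lipschitz search. The reduced inequality, rearranged as $F(\bm{y}) \ge F(\bm{x}_+) + \langle \bm{v}, \bm{y} - \bvec{x}\rangle + \frac{1}{2L(\bvec{x})}\|\bm{v}\|_2^2$, is then exactly the conclusion \eqref{label_020} of Lemma~\ref{label_018}, so this case is immediate.

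For the successful branch I would invoke the acceptance test from \eqref{label_029}, namely $F(\bm{x}_+) \le -d_{\bm{Q},\bm{B}}(\bvec{\lambda}, a)$. Writing $\bm{d} = \bm{G}\bvec{\lambda}$ and using $\bm{Q} = \bm{G}^T\bm{G}$, $\bm{B} = \bm{G}^T\bvec{x} + \bm{H}$, I would expand $-d_{\bm{Q},\bm{B}}(\bvec{\lambda}, a) = -\frac{a}{2}\|\bm{d}\|_2^2 + \langle \bvec{x}, \bm{d}\rangle + \langle \bm{H}, \bvec{\lambda}\rangle$. Substituting this bound for $F(\bm{x}_+)$ into the reduced inequality (with $\tau = a$, $\bm{v} = \bm{d}$) and cancelling the common terms $-\frac{a}{2}\|\bm{d}\|_2^2$ and $\langle\bvec{x},\bm{d}\rangle$ leaves, using $\langle\bm{d},\bm{y}\rangle = \langle\bvec{\lambda}, \bm{G}^T\bm{y}\rangle$, precisely
\[
\langle \bvec{\lambda}, \bm{H} + \bm{G}^T \bm{y}\rangle \le F(\bm{y}) .
\]
Since $\bvec{\lambda}\in\Delta_p$ is feasible for the inner problem, the left-hand side is at most $\max_{\bm{\lambda}\in\Delta_p}\langle\bm{\lambda},\bm{H}+\bm{G}^T\bm{y}\rangle = l_F(\bm{y})$, and $l_F(\bm{y})\le F(\bm{y})$ by the global lower bound property \eqref{label_024}, which closes the case.

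The step I expect to require the most care — and the only genuinely delicate point — is the justification that $l_F$ is a valid global lower bound on $F$ for the general model actually carried by the algorithm. Each stored pair $(\bm{H})_i,(\bm{G})_i$ is either one of the base cuts drawn from $H_F(\mathcal{Z}),G_F(\mathcal{Z})$, which is an affine minorant of $F$ by Lemma~\ref{label_018} whenever the corresponding test point satisfied \eqref{label_019} when generated (guaranteed by the Lipschitz search), or a convex combination of such cuts, which is again a minorant since a convex combination of affine lower bounds is a lower bound. Taking the pointwise maximum over the simplex preserves the inequality, giving $l_F \le F$ everywhere. It is worth emphasising that no accuracy assumption on the QP solver enters the argument: the acceptance test in \eqref{label_029} together with mere feasibility $\bvec{\lambda}\in\Delta_p$ is all that is used, which is exactly why the inexactness of the inner solve leaves the guarantee untouched.
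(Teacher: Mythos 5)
Your proof is correct and follows essentially the same route as the paper's: the same two-case split, the acceptance test $F(\bm{x}_+) \leq -d_{\bm{Q},\bm{B}}(\bvec{\lambda},a)$ combined with feasibility of $\bvec{\lambda}$ and the global lower-bound property of the model in the successful branch, and Lemma~\ref{label_018} in the fallback branch. The only cosmetic difference is that you expand $\|\bvec{x}-\tau\bm{v}-\bm{y}\|_2^2$ directly, whereas the paper packages the identical algebra into the auxiliary parabola $u_{\bar{\bm{\lambda}}}$ and its canonical form \eqref{label_037}.
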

\begin{proof}
We treat each case separately. First, if $F(\bm{x}_+) \leq -d_{\bm{Q},\bm{B}}(\bar{\bm{\lambda}}, a)$ and $a < 1 /L(\bvec{x})$, we define the following function built around $\bar{\bm{\lambda}}$:
\begin{equation} \label{label_034}
u_{\bar{\bm{\lambda}}}(\bm{y}) \overset{\operatorname{def}}{=} \langle \bar{\bm{\lambda}}, \bm{H} + \bm{G}^T \bm{y} \rangle + \frac{1}{2 a} \| \bm{y} - \bvec{x} \|_2^2 , \quad \bm{y} \in \mathbb{R}^n .
\end{equation}
It follows that the new iterate is the optimum of $u_{\bar{\bm{\lambda}}}$, namely
\begin{equation}
\bm{x}_+ = \bvec{x} - a \bm{G} \bar{\bm{\lambda}} = \argmin_{\bm{y} \in \mathbb{R}^n} u_{\bar{\bm{\lambda}}}(\bm{y}) .
\end{equation}
The first term in the function $u_{\bar{\bm{\lambda}}}$ is a global lower bound on the composite objective $F$ and therefore we have
\begin{equation} \label{label_035}
u_{\bar{\bm{\lambda}}}(\bm{y}) \leq F(\bm{y}) + \frac{1}{2 a} \| \bm{y} - \bvec{x} \|_2^2, \quad \bm{y} \in \mathbb{R}^n .
\end{equation}

The optimal value of $u_{\bar{\bm{\lambda}}}$ matches the one of the dual
\begin{equation} \label{label_036}
u_{\bar{\bm{\lambda}}}^* = u_{\bar{\bm{\lambda}}}(\bm{x}_+) = -d_{\bm{Q},\bm{B}}(\bar{\bm{\lambda}}, a) \geq F(\bm{x}_+) .
\end{equation}
The function $u_{\bar{\bm{\lambda}}}$ is a parabola (i.e. a quadratic function whose Hessian is a positive multiple of the identity matrix) and can also we written as
\begin{equation} \label{label_037}
u_{\bar{\bm{\lambda}}}(\bm{y}) = u_{\bar{\bm{\lambda}}}(\bm{x}_+) + \frac{1}{2 a} \| \bm{y} - \bm{x}_{+} \|_2^2 , \quad \bm{y} \in \mathbb{R}^n .
\end{equation}
Combining \eqref{label_036} with \eqref{label_037} we obtain
\begin{equation} \label{eq:u_F_x_+}
u_{\bar{\bm{\lambda}}}(\bm{y}) \geq F(\bm{x}_+) + \frac{1}{2 a} \| \bm{y} - \bm{x}_{+} \|_2^2, \quad \bm{y} \in \mathbb{R}^n .
\end{equation}
Putting together \eqref{label_035} with \eqref{eq:u_F_x_+} and rearranging terms using $a > 0$ gives
\begin{equation} \label{label_038}
\frac{1}{2}\| \bm{x}_+ - \bm{y} \|_2^2 \leq \frac{1}{2}\| \bvec{x} - \bm{y} \|_2^2 + a \left( F(\bm{y}) - F(\bm{x}_+) \right) , \quad \bm{y} \in \mathbb{R}^n .
\end{equation}

In the second case, expanding \eqref{label_020} applied to $\bm{x}_+$ using \eqref{label_014} we get
\begin{equation} \label{label_068mma_x_plus}
F(\bm{y}) \geq F(\bm{x}_+) + \frac{L(\bvec{x})}{2} \| \bvec{x} - \bm{x}_+ \|_2^2 + L(\bvec{x}) \langle \bvec{x} - \bm{x}_+ , \bm{y} - \bvec{x} \rangle, \quad \bm{y} \in \mathbb{R}^n .
\end{equation}
Rearranging terms in \eqref{label_068mma_x_plus} yields
\begin{equation} \label{label_040}
\frac{1}{2}\| \bm{x}_+ - \bm{y} \|_2^2 \leq \frac{1}{2}\| \bvec{x} - \bm{y} \|_2^2 + \frac{1}{ L(\bvec{x})} \left( F(\bm{y}) - F(\bm{x}_+) \right) , \quad \bm{y} \in \mathbb{R}^n .
\end{equation}
Finally, combining \eqref{label_038} in the first case with \eqref{label_040} in the second case gives the desired result.
\end{proof}
Using Theorem~\ref{label_033} we can formulate a convergence rate for our method.
\begin{theorem} \label{label_041}
The iterates produced by Algorithm~\ref{label_030} satisfy
\begin{equation}
F(\bm{x}_k) - F^* \leq \frac{\| \bm{x}_0 - \bm{x}^*\|_2^2}{2 A_k} \leq \frac{L_u \| \bm{x}_0 - \bm{x}^*\|_2^2}{2 k}, \quad k \geq 1 , \quad \bm{x}^* \in X^*,
\end{equation}
where $A_k$ is the convergence guarantee, in this case given by the sum of all step sizes, namely
\begin{equation} \label{label_042}
A_k \overset{\operatorname{def}}{=} \sum_{i = 1}^{k} a_i, \quad k \geq 0,
\end{equation}
and $L_u \overset{\operatorname{def}}{=} \max \{ r_u L_f, r_d L_0 \}$ is the worst-case Lipschitz constant estimate.
\end{theorem}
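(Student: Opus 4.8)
The plan is to obtain the first inequality from a telescoping estimate built on Theorem~\ref{label_033}, and the second from a worst-case bound on the line-search output that forces $A_k \geq k/L_u$.

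First I would invoke Theorem~\ref{label_033} at each outer iteration $i \in \{1,\dots,k\}$, identifying $\bm{x}_+ = \bm{x}_i$, $\bvec{x} = \bm{x}_{i-1}$, $\tau = a_i$, and choosing $\bm{y} = \bm{x}^* \in X^*$ so that $F(\bm{y}) = F^*$. Rearranging the resulting inequality gives
\begin{equation}
a_i \left( F(\bm{x}_i) - F^* \right) \leq \tfrac{1}{2}\| \bm{x}_{i-1} - \bm{x}^* \|_2^2 - \tfrac{1}{2}\| \bm{x}_i - \bm{x}^* \|_2^2 .
\end{equation}
Summing over $i$ telescopes the right-hand side and discards the nonnegative final term, yielding $\sum_{i=1}^k a_i (F(\bm{x}_i) - F^*) \leq \tfrac{1}{2}\| \bm{x}_0 - \bm{x}^* \|_2^2$.

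To turn the weighted sum into $A_k (F(\bm{x}_k) - F^*)$, I would show the objective values are non-increasing, $F(\bm{x}_i) \geq F(\bm{x}_k)$ for $i \leq k$, which must be checked on both branches of \eqref{label_029}. In the fallback branch $\bm{x}_i = \tvec{x}_i = T_{L_i}(\bm{x}_{i-1})$, and since the Lipschitz search guarantees the premise \eqref{label_019}, Lemma~\ref{label_018} applied with $\bm{y} = \bm{x}_{i-1}$ gives $F(\bm{x}_i) \leq F(\bm{x}_{i-1})$. In the accepted-step branch the acceptance test reads $F(\bm{x}_i) \leq -d_{\bm{Q},\bm{B}}(\bvec{\lambda}, a_i)$ with $\bvec{\lambda} = \bm{\lambda}_i$; by \eqref{label_036} the right-hand side equals $\min_{\bm{y}} u_{\bvec{\lambda}}(\bm{y})$, so evaluating $u_{\bvec{\lambda}}$ at $\bm{y} = \bm{x}_{i-1}$ (where its quadratic term vanishes) gives $F(\bm{x}_i) \leq u_{\bvec{\lambda}}(\bm{x}_{i-1}) = \langle \bvec{\lambda}, \bm{H} + \bm{G}^T \bm{x}_{i-1} \rangle \leq l_F(\bm{x}_{i-1}) \leq F(\bm{x}_{i-1})$, the last two steps using $\bvec{\lambda} \in \Delta_p$ and the lower bound \eqref{label_024}. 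Monotonicity then gives $A_k (F(\bm{x}_k) - F^*) \leq \sum_{i=1}^k a_i (F(\bm{x}_i) - F^*)$, which combined with the telescoped bound proves the first inequality.

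For the second inequality it suffices to verify $A_k \geq k/L_u$, since the step-size search always returns $a_i \geq \tau_i = 1/L_i$ (it either accepts such an $a_i$ or falls back to $a_i = \tau_i$). I would bound the Lipschitz search by noting that \eqref{label_002} makes the condition \eqref{label_019} hold once $L_{k+1} \geq L_f$; as the search starts at $r_d L_k$ and inflates by $r_u$, the accepted value obeys $L_{k+1} \leq \max\{r_d L_k, r_u L_f\}$, treating the immediate-acceptance case $r_d L_k \geq L_f$ and the inflation case $r_d L_k < L_f$ separately. A one-line induction, using $r_d \leq 1$ to absorb $r_d L_u$ into $L_u$, then gives $L_k \leq \max\{r_d L_0, r_u L_f\} = L_u$ for all $k \geq 1$, hence $a_i \geq 1/L_u$ and $A_k \geq k/L_u$, which is the claim.

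The main obstacle is the monotonicity argument in the accepted-step branch: unlike the fallback case it does not reduce to a single descent lemma but requires chaining the acceptance test through the exact dual value \eqref{label_036}, the evaluation of $u_{\bvec{\lambda}}$ at the previous iterate, and the global lower bound $l_F \leq F$. The line-search bound $L_k \leq L_u$ is routine but one must keep the immediate-acceptance and inflation cases distinct to justify the factor $r_u$.
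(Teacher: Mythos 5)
Your proof is correct and follows essentially the same route as the paper: telescoping Theorem~\ref{label_033} at $\bm{y}=\bm{x}^*$, converting the weighted sum into $A_k(F(\bm{x}_k)-F^*)$ via monotonicity of the objective values, and bounding $a_i \geq 1/L_i \geq 1/L_u$ through the two line-search procedures. The only divergence is the monotonicity step, which the paper dispatches in one line by applying Theorem~\ref{label_033} with $\bm{y}=\bvec{x}$ (the left-hand side $\tfrac{1}{2}\|\bm{x}_+-\bvec{x}\|_2^2$ is nonnegative and $\tau>0$, so $F(\bm{x}_+)\leq F(\bvec{x})$ follows immediately), whereas your two-branch case analysis essentially retraces the proof of that theorem specialized to $\bm{y}=\bvec{x}$ --- correct, but the ``main obstacle'' you identify is already absorbed into the cited result.
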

\begin{proof}
Taking Theorem~\ref{label_033} with $\bm{y} = \bvec{x}$ we obtain that Algorithm~\ref{label_030} produces iterates with monotonically decreasing objective function values, namely $F(\bm{x}_{k + 1}) \leq F(\bm{x}_k)$ for all $k \geq 1$.
Applying Theorem~\ref{label_033} with $\bm{y} = \bm{x}^*$ and $\tau = a_{k + 1}$ yields
\begin{equation} \label{label_043}
\frac{1}{2} \| \bm{x}_{i + 1} - \bm{x}^* \|_2^2 \leq \frac{1}{2} \| \bm{x}_i - \bm{x}^* \|_2^2 + a_{k + 1} ( F^* - F(\bm{x}_{i + 1}) ), \quad i \geq 0 .
\end{equation}
Summing up \eqref{label_043} for every $i \in \{0, ..., k - 1\}$ and canceling matching terms gives
\begin{equation} \label{label_044}
\frac{1}{2} \| \bm{x}_k - \bm{x}^* \|_2^2 \leq \frac{1}{2} \| \bm{x}_0 - \bm{x}^* \|_2^2 + \displaystyle \sum_{i = 1}^{k} a_i (F(\bm{x}^*) - F(\bm{x}_i)) , \quad k \geq 1.
\end{equation}
Taking the monotonicity of Algorithm~1 into account and refactoring \eqref{label_044}, we obtain the first inequality in our theorem.
The line-search conditions ensure that $L(\bm{x}_k) \leq L_u$ and $a_{k} \geq 1 / L(\bm{x}_{k})$ for all $k \geq 1$, which gives the second inequality.
\end{proof}

Note that the convergence guarantees in Theorem~\ref{label_041} are not influenced by the quality of the inner problem solutions, a considerable improvement over the original Gradient Method with Memory, whose convergence analysis critically depends on the inner problem objective accuracy $\delta$ \cite{ref_018}. This allows inner solvers to terminate when a certain amount of computing resources have been expended (see also \cite{ref_005}).

\subsubsection{Quadratic functional growth}
\begin{proposition}
If $F$ has the QFG property with parameter $\mu > 0$, then Algorithm~1 exhibits a linear convergence rate measured both in iterates and in objective function values, given for every $k \geq 1$ by
\begin{align}
d^2(\bm{x}_k) &\leq \displaystyle \left( \prod_{i = 1}^{k} \frac{1}{1 + a_i \mu} \right) d^2(\bm{x}_0) \leq \left(1 + \frac{\mu}{L_u} \right)^{-k} d^2(\bm{x}_0) , \label{label_045} \\
F(\bm{x}_k) - F^* &\leq \displaystyle \left( \prod_{i = 1}^{k - 1} \frac{1}{1 + a_i \mu} \right) \frac{d^2(\bm{x}_0)}{2 a_k} \leq \left(1 + \frac{\mu}{L_u} \right)^{1-k} \frac{L_u d^2(\bm{x}_0)}{2} . \label{label_046}
\end{align}
\end{proposition}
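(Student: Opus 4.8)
The plan is to convert the generic descent estimate of Theorem~\ref{label_033} into a one-step contraction of the squared distance to the optimal set, and then unroll it. I would invoke Theorem~\ref{label_033} at the single iteration mapping $\bvec{x} = \bm{x}_k$ to $\bm{x}_+ = \bm{x}_{k+1}$ with step size $\tau = a_{k+1}$. The decisive modeling choice is the reference point: instead of fixing one optimal point for all iterations, I would set $\bm{y} = o(\bm{x}_k)$, the projection of the current iterate. Since $o(\bm{x}_k) \in X^*$ gives $F(o(\bm{x}_k)) = F^*$, Theorem~\ref{label_033} yields
\begin{equation}
\frac{1}{2}\| \bm{x}_{k+1} - o(\bm{x}_k) \|_2^2 \leq \frac{1}{2}\| \bm{x}_k - o(\bm{x}_k) \|_2^2 + a_{k+1}\left(F^* - F(\bm{x}_{k+1})\right).
\end{equation}

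Next I would bridge successive projections. Because $o(\bm{x}_{k+1})$ is by definition the nearest optimal point to $\bm{x}_{k+1}$, the estimate $\| \bm{x}_{k+1} - o(\bm{x}_{k+1}) \|_2^2 \leq \| \bm{x}_{k+1} - o(\bm{x}_k) \|_2^2$ lower-bounds the left-hand side above. I would then apply the QFG property \eqref{label_003} at $\bm{x}_{k+1}$ in the form $F^* - F(\bm{x}_{k+1}) \leq -\frac{\mu}{2}\| \bm{x}_{k+1} - o(\bm{x}_{k+1}) \|_2^2$ to absorb the objective gap into a distance term. Collecting the two distance terms gives the contraction
\begin{equation}
(1 + a_{k+1}\mu)\,\| \bm{x}_{k+1} - o(\bm{x}_{k+1}) \|_2^2 \leq \| \bm{x}_k - o(\bm{x}_k) \|_2^2 .
\end{equation}
Telescoping this over the index yields the left inequality of \eqref{label_045}; substituting $a_i \geq 1/L_u$ (established in the proof of Theorem~\ref{label_041}) gives $1 + a_i\mu \geq 1 + \mu/L_u$ and hence the right inequality.

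For the objective bound \eqref{label_046}, I would return to Theorem~\ref{label_033} at the step producing $\bm{x}_k$, again with $\bm{y} = o(\bm{x}_{k-1})$, and discard the nonnegative term $\frac{1}{2}\| \bm{x}_k - o(\bm{x}_{k-1}) \|_2^2$ on the left. Rearranging leaves $F(\bm{x}_k) - F^* \leq \| \bm{x}_{k-1} - o(\bm{x}_{k-1}) \|_2^2 / (2 a_k)$. Inserting the distance bound \eqref{label_045} evaluated at index $k-1$ produces the left inequality of \eqref{label_046}, and using $a_i \geq 1/L_u$ in the product together with $1/a_k \leq L_u$ in the prefactor delivers the right inequality. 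The main obstacle is precisely the handling of the moving projection: QFG controls only the distance to $o(\bm{x}_{k+1})$, whereas the descent estimate naturally produces the distance to the reference $o(\bm{x}_k)$ inherited from the previous iterate. The nonexpansiveness-style inequality $\| \bm{x}_{k+1} - o(\bm{x}_{k+1}) \|_2 \leq \| \bm{x}_{k+1} - o(\bm{x}_k) \|_2$ is exactly what reconciles the two in the correct direction, and it is the step where the argument would collapse if one naively fixed a single $\bm{x}^* \in X^*$ throughout.
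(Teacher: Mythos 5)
Your proposal is correct and follows essentially the same route as the paper: Theorem~\ref{label_033} with the moving reference point $\bm{y} = o(\bm{x}_k)$, the projection inequality $\| \bm{x}_{k+1} - o(\bm{x}_{k+1}) \|_2 \leq \| \bm{x}_{k+1} - o(\bm{x}_k) \|_2$, the QFG bound to absorb the objective gap, telescoping with $a_i \geq 1/L_u$, and the same derivation of \eqref{label_046} from \eqref{label_047} at $i = k-1$ combined with \eqref{label_045}. Your one-step contraction even states the sign correctly where the paper's intermediate display \eqref{eq:ssrk_i+1_i} carries a small typo.
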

\begin{proof}
Theorem~\ref{label_033} with $\bm{y} = \bm{o}(\bm{x}_i)$ for all $i \geq 0$ yields
\begin{equation} \label{label_047}
\frac{1}{2} \| \bm{x}_{i + 1} - \bm{o}(\bm{x}_i) \|_2^2 \leq \frac{1}{2} \| \bm{x}_i - \bm{o}(\bm{x}_i) \|_2^2 - a_{i + 1} (F(\bm{x}_{i + 1}) - F^*) .
\end{equation}
From the definition of the projection operation $o$ we have that
\begin{equation} \label{label_048}
\| \bm{x}_{i + 1} - \bm{o}(\bm{x}_{i + 1}) \|_2 \leq \| \bm{x}_{i + 1} - \bm{x}^* \|_2, \quad \bm{x}^* \in X^* .
\end{equation}
Combining \eqref{label_048} using $\bm{x}^* = \bm{o}(\bm{x}_i)$, \eqref{label_047} and the QFG condition in \eqref{label_003} we obtain
\begin{equation} \label{eq:ssrk_i+1_i}
\frac{1}{2} \| \bm{x}_{i + 1} - \bm{o}(\bm{x}_{i + 1}) \|_2^2 \leq \frac{1}{2} \| \bm{x}_{i + 1} - \bm{o}(\bm{x}_{i}) \|_2^2 \leq \frac{1}{2} \| \bm{x}_i - \bm{o}(\bm{x}_i) \|_2^2 - \frac{\mu a_{i + 1}}{2} \| \bm{x}_{i + 1} - \bm{o}(\bm{x}_{i + 1}) \|_2^2 .
\end{equation}
Refactoring \eqref{eq:ssrk_i+1_i} yields
\begin{equation} \label{label_049}
\frac{1}{2} \| \bm{x}_{i + 1} - \bm{o}(\bm{x}_{i + 1}) \|_2^2 \leq \frac{1}{1 + \mu a_{i + 1}} \| \bm{x}_{i} - \bm{o}(\bm{x}_{i}) \|_2^2 .
\end{equation}
Iterating \eqref{label_049} for $i \in \{0, ..., k - 1\}$ and using $a_{i + 1} \geq 1 / L_u$ we get \eqref{label_045}.

On the other hand, \eqref{label_047} with $i = k - 1$ also implies that
\begin{equation} \label{label_050}
F(\bm{x}_{k}) - F^* \leq \frac{1}{2 a_{k}} \| \bm{x}_{k - 1} - \bm{o}(\bm{x}_{k - 1}) \|_2^2, \quad k \geq 1.
\end{equation}
Applying \eqref{label_045} to \eqref{label_050} gives \eqref{label_046}.

\end{proof}

\subsubsection{Strong convexity} \label{label_051}

We have seen that the quadratic functional growth property is sufficient to obtain a linear convergence rate for Algorithm~\ref{label_030}. We consider the case when $f$ and $\Psi$ have strong convexity parameters $\mu_f \geq 0$ and $\mu_{\Psi} \geq 0$, respectively, and derive a much better rate.

We first formulate a tighter lower bound than the one stipulated in Lemma~\ref{label_018}. To do so, we extend the definition of the composite gradient mapping in \eqref{label_014} as
\begin{equation} \label{label_052}
g_L(\bm{x}) \overset{\operatorname{def}}{=} (L + \mu_{\Psi}) (\bm{x} - T_L(\bm{x})), \quad \bm{x} \in \mathbb{R}^n ,
\end{equation}
with the definition of $T_L(\bm{x})$ unchanged.
\begin{lemma} \label{label_053}
If the objective is strongly convex and the condition in \eqref{label_019} holds, then we can formulate a global lower bound on the objective $F$ for all $\bm{y} \in \mathbb{R}^n$ as
\begin{equation} \label{label_054}
F(\bm{y}) \geq F(T_L(\bm{x})) + \frac{1}{2 (L + \mu_{\Psi})} \| g_{L}(\bm{x})\|_2^2 + \langle g_{L}(\bm{x}), \bm{y} - \bm{x} \rangle + \frac{\mu_f + \mu_{\Psi}}{2} \| \bm{y} - \bm{x} \|_2^2.
\end{equation}
\end{lemma}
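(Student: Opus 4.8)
The plan is to mirror the proof of Lemma~\ref{label_018} but to carry along the two strong convexity moduli separately, letting the extra $\mu_\Psi$ terms generated along the way account precisely for the modified definition of the gradient mapping in \eqref{label_052}. First I would extract from the first-order optimality conditions of the subproblem \eqref{label_015} defining $T_L(\bm{x})$ a subgradient $\bm{\xi}_L(\bm{x}) \in \partial\Psi(T_L(\bm{x}))$ satisfying $\grad{f}(\bm{x}) + L(T_L(\bm{x}) - \bm{x}) + \bm{\xi}_L(\bm{x}) = \bm{0}$, equivalently $\grad{f}(\bm{x}) + \bm{\xi}_L(\bm{x}) = L(\bm{x} - T_L(\bm{x}))$. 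This is the single algebraic identity that the rest of the argument leans on, and since $T_L$ is unchanged it holds just as in the non-strongly-convex setting.

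Next I would lower bound $F(\bm{y}) = f(\bm{y}) + \Psi(\bm{y})$ by applying the strong convexity inequality \eqref{label_004} to $f$ at the anchor $\bm{x}$ (with modulus $\mu_f$) and to $\Psi$ at the point $T_L(\bm{x})$ using the subgradient $\bm{\xi}_L(\bm{x})$ (with modulus $\mu_\Psi$), then summing the two estimates. Writing $\bm{y} - T_L(\bm{x}) = (\bm{y} - \bm{x}) + (\bm{x} - T_L(\bm{x}))$ and expanding the resulting squared norm $\| \bm{y} - T_L(\bm{x}) \|_2^2$, I would regroup all terms according to their dependence on $\bm{y} - \bm{x}$: a constant part, a part linear in $\bm{y} - \bm{x}$, and a part quadratic in $\bm{y} - \bm{x}$.

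The crux is the bookkeeping of the linear and quadratic terms. The quadratic contributions of the two strong convexity estimates combine to exactly $\frac{\mu_f + \mu_\Psi}{2} \| \bm{y} - \bm{x} \|_2^2$. More delicately, the cross term $\mu_\Psi \langle \bm{y} - \bm{x}, \bm{x} - T_L(\bm{x}) \rangle$ produced by expanding $\| \bm{y} - T_L(\bm{x}) \|_2^2$ adds to the linear coefficient $\grad{f}(\bm{x}) + \bm{\xi}_L(\bm{x}) = L(\bm{x} - T_L(\bm{x}))$, upgrading it from $L(\bm{x} - T_L(\bm{x}))$ to $(L + \mu_\Psi)(\bm{x} - T_L(\bm{x})) = g_L(\bm{x})$; this is precisely why the definition in \eqref{label_052} carries the factor $L + \mu_\Psi$, and I expect matching this coefficient cleanly to be the main obstacle in the accounting.

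Finally, for the $\bm{y}$-independent remainder I would invoke the local upper bound \eqref{label_019} to replace $-f(T_L(\bm{x}))$ and then substitute the optimality identity $\grad{f}(\bm{x}) + \bm{\xi}_L(\bm{x}) = L(\bm{x} - T_L(\bm{x}))$. A short computation collapses the constant remainder to $\frac{L + \mu_\Psi}{2} \| \bm{x} - T_L(\bm{x}) \|_2^2$, which upon inserting $\| \bm{x} - T_L(\bm{x}) \|_2 = \| g_L(\bm{x}) \|_2 / (L + \mu_\Psi)$ becomes exactly $\frac{1}{2(L + \mu_\Psi)} \| g_L(\bm{x}) \|_2^2$, yielding \eqref{label_054}. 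Setting $\mu_f = \mu_\Psi = 0$ recovers Lemma~\ref{label_018}, which serves as a useful consistency check on the constants.
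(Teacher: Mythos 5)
Your proposal is correct: the optimality identity $\grad{f}(\bm{x}) + \bm{\xi}_L(\bm{x}) = L(\bm{x} - T_L(\bm{x}))$, the two strong-convexity inequalities (for $f$ at $\bm{x}$ and for $\Psi$ at $T_L(\bm{x})$), and the bound \eqref{label_019} do combine exactly as you describe, with the cross term $\mu_{\Psi}\langle \bm{y}-\bm{x}, \bm{x}-T_L(\bm{x})\rangle$ promoting the linear coefficient to $(L+\mu_{\Psi})(\bm{x}-T_L(\bm{x})) = g_L(\bm{x})$ and the constant remainder collapsing to $\frac{1}{2(L+\mu_{\Psi})}\|g_L(\bm{x})\|_2^2$. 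The paper itself only cites Lemma~1 of \cite{ref_007} here, but your argument is precisely the method the paper spells out for the closely related Lemma~\ref{label_120} (the $\mu_f = 0$ case), so this is essentially the same approach, written out in full rather than deferred to the reference.
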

\begin{proof}
See the proof of Lemma~1 in \cite{ref_007}.
\end{proof}
Lemma~\ref{label_053} confirms that the extended definition of the composite gradient preserves its gradient-like properties previously present in the \eqref{label_014} form as long as we define the fallback step size as $\tau_{-} = 1 / (L + \mu_{\Psi})$. These properties are the descent update (gradient step) and the descent rule (see also \cite{ref_008}), respectively given for all $\bm{x} \in \mathbb{R}^n$ by
\begin{equation}
T_L(\bm{x}) = \bm{x} - \frac{1}{L + \mu_{\Psi}} \cdot g_L(\bm{x}), \quad
F(T_L(\bm{x})) \leq F(\bm{x}) - \frac{1}{2 (L + \mu_{\Psi})} \| g_L(\bm{x}) \|_2^2.
\end{equation}

The form of the lower bound also suggests a simple way of extending the scalars introduced in \eqref{label_021} as
\begin{equation} \label{label_055}
h_F(\bm{z}_i) \overset{\operatorname{def}}{=} F(T_{L(\bm{z}_i)}(\bm{z}_i)) + \frac{1}{2 (L(\bm{z}_i) + \mu_{\Psi})} \| g_{L(\bm{z}_i)}(\bm{z}_i)\|_2^2 - \langle g_{L(\bm{z}_i)}(\bm{z}_i), \bm{z}_i \rangle ,
\end{equation}
with \eqref{label_022} and \eqref{label_023} as well as all other definitions now referencing \eqref{label_052} and \eqref{label_055}.

Using this tighter bound we can now obtain a stronger version of Theorem~\ref{label_041} in the form of Proposition~\ref{label_033_sc}. Moreover, strong convexity is not a necessary condition for this version. Within the scope of Proposition~\ref{label_033_sc}, $\mu_f$ and $\mu_{\Psi}$ do not need to satisfy \eqref{label_004} for $f$ and $\Psi$, respectively. Their properties can be limited to those mentioned explicitly in within the scope of the proposition. Acknowledging the potential of the following results, we leave the application of this observation to a more general context as a topic for future work.
\begin{proposition} \label{label_033_sc}
If $\bm{x}_+$ and $\tau$ have been generated using \eqref{label_029} with $\bm{x}_{-} = T_{L(\bvec{x})}(\bvec{x})$ and $\tau_{-} = 1 / ({L(\bvec{x})} + \mu_{\Psi})$ where the first entry in the model is given by $(\bm{H})_1 = h_F(\bvec{x})$, $(\bm{G})_1 = g_F(\bvec{x})$ and the following property holds with parameter $\mu \geq 0$ for a certain $\bm{x}^* \in X^*$ :
\begin{equation} \label{label_057}
F^* \geq \max \{ \bm{H} + \bm{G}^T \bm{x}^* \} + \frac{\mu}{2}\| \bvec{x} - \bm{x}^* \|_2^2,
\end{equation}
then we have
\begin{equation}
\frac{1}{2}\| \bm{x}_+ - \bm{x^*} \|_2^2 \leq \frac{1}{2}(1 - \tau \mu)\| \bvec{x} - \bm{x}^* \|_2^2 + \tau \left( F^* - F(\bm{x}_+) \right).
\end{equation}
\end{proposition}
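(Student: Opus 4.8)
The plan is to mirror the two-case structure of the proof of Theorem~\ref{label_033}, the single new ingredient being the growth-augmented bound \eqref{label_057} evaluated at the optimizer $\bm{x}^*$. Where Theorem~\ref{label_033} only used that the model is a global lower bound on $F$, here the assumption \eqref{label_057} injects the extra term $\frac{\mu}{2}\|\bvec{x}-\bm{x}^*\|_2^2$ which, once scaled by the step size $\tau$, becomes exactly the contraction factor $(1-\tau\mu)$ on the right-hand side. I would treat the successful step-size case and the fallback case separately and check that each yields the same target inequality.

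First I would handle the case where the step-size search succeeds, so $\tau = a$ and $\bm{x}_+ = \bvec{x} - a\bm{G}\bar{\bm{\lambda}}$. I would reuse the function $u_{\bar{\bm{\lambda}}}$ from \eqref{label_034}, its parabola identity \eqref{label_037}, and the dual optimality relation \eqref{label_036} giving $u_{\bar{\bm{\lambda}}}(\bm{x}_+) \geq F(\bm{x}_+)$. The only change is the evaluation at $\bm{y}=\bm{x}^*$: since $\bar{\bm{\lambda}}\in\Delta_p$ we have $\langle\bar{\bm{\lambda}},\bm{H}+\bm{G}^T\bm{x}^*\rangle\le\max\{\bm{H}+\bm{G}^T\bm{x}^*\}$, and then \eqref{label_057} replaces the right side by $F^*-\frac{\mu}{2}\|\bvec{x}-\bm{x}^*\|_2^2$. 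Combining the parabola identity at $\bm{x}^*$ with $u_{\bar{\bm{\lambda}}}(\bm{x}_+)\ge F(\bm{x}_+)$ gives $F(\bm{x}_+)+\frac{1}{2a}\|\bm{x}^*-\bm{x}_+\|_2^2 \le F^*-\frac{\mu}{2}\|\bvec{x}-\bm{x}^*\|_2^2+\frac{1}{2a}\|\bm{x}^*-\bvec{x}\|_2^2$, which on multiplying by $a=\tau$ and rearranging is precisely the claim.

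Next I would treat the fallback case, where $\bm{x}_+=T_{L(\bvec{x})}(\bvec{x})$ and $\tau=1/(L(\bvec{x})+\mu_{\Psi})$. Because the proposition does not assume genuine strong convexity, Lemma~\ref{label_053} is unavailable as a standalone lower bound; instead I would use that, by the extended definitions \eqref{label_052} and \eqref{label_055} together with the stipulation $(H)_1=h_F(\bvec{x})$, $(G)_1=g_F(\bvec{x})$, the first model entry equals exactly the linear part of \eqref{label_054}, namely $(H)_1+\langle(G)_1,\bm{x}^*\rangle = F(\bm{x}_+)+\frac{\tau}{2}\|\bvec{g}\|_2^2+\langle\bvec{g},\bm{x}^*-\bvec{x}\rangle$ with $\bvec{g}=g_{L(\bvec{x})}(\bvec{x})$. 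Bounding $\max\{\bm{H}+\bm{G}^T\bm{x}^*\}$ below by this first entry and applying \eqref{label_057} then produces a lower bound on $F^*$ carrying the growth term. I would finish by substituting the descent-update identity $\bvec{g}=\tfrac1\tau(\bvec{x}-\bm{x}_+)$ (so that $\frac{\tau}{2}\|\bvec{g}\|_2^2=\frac{1}{2\tau}\|\bvec{x}-\bm{x}_+\|_2^2$) and the elementary expansion $\frac{1}{2}\|\bvec{x}-\bm{x}_+\|_2^2+\langle\bvec{x}-\bm{x}_+,\bm{x}^*-\bvec{x}\rangle=\frac{1}{2}\|\bm{x}_+-\bm{x}^*\|_2^2-\frac{1}{2}\|\bvec{x}-\bm{x}^*\|_2^2$, reaching the same inequality.

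The main obstacle is exactly this fallback case: strong convexity has been weakened to the single scalar condition \eqref{label_057}, so the growth term cannot be supplied by convexity of $F$ but only by \eqref{label_057} at $\bm{x}^*$. This forces the proof to route the growth through the model rather than through a Lemma~\ref{label_053}-style inequality, which is why the proposition pins down the first model entry and the fallback step size $\tau_{-}=1/(L(\bvec{x})+\mu_{\Psi})$: without the $\mu_{\Psi}$ correction the relation $\bvec{g}=\tfrac1\tau(\bvec{x}-\bm{x}_+)$ would fail and the term $\frac{\tau}{2}\|\bvec{g}\|_2^2$ would not collapse cleanly into $\frac{1}{2\tau}\|\bvec{x}-\bm{x}_+\|_2^2$. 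Finally I would confirm that the contraction factor agrees in the two cases, which it does, since in both the growth enters as $\tau\cdot\frac{\mu}{2}\|\bvec{x}-\bm{x}^*\|_2^2$.
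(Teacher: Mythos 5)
Your proposal is correct and follows essentially the same two-case route as the paper: the successful-step case reuses $u_{\bar{\bm{\lambda}}}$, \eqref{label_036} and \eqref{label_037} with \eqref{label_057} evaluated at $\bm{x}^*$ (the paper's \eqref{label_058}--\eqref{label_060}), and the fallback case routes the growth term through the first model entry via \eqref{label_052} and \eqref{label_055}, which is exactly how the paper arrives at \eqref{label_061}--\eqref{label_062}. Your closing remarks on why the first model entry and the $\mu_{\Psi}$-corrected fallback step size are pinned down accurately reflect the role these hypotheses play in the paper's argument.
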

\begin{proof}
If $F(\bm{x}_+) \leq -d_{\bm{Q},\bm{B}}(\bar{\bm{\lambda}}, a)$ and $a < \tau_{-}$, the condition in \eqref{label_057} implies for any $\bvec{\lambda} \in \Delta_m$ that
\begin{equation} \label{label_058}
u_{\bar{\bm{\lambda}}}(\bm{x}^*) = \langle \bar{\bm{\lambda}}, \bm{H} + \bm{G}^T \bm{x}^* \rangle + \frac{1}{2 a} \| \bm{x}^* - \bvec{x} \|_2^2 \leq F^* + \left(\frac{1}{2a} - \frac{\mu}{2}\right) \| \bm{x}^* - \bvec{x} \|_2^2 .
\end{equation}
In this case \eqref{label_036} still holds and applying \eqref{label_037} we get
\begin{equation} \label{label_059}
u_{\bar{\bm{\lambda}}}(\bm{x}^*) = u_{\bar{\bm{\lambda}}}^* + \frac{1}{2a} \| \bm{x}^* - \bvec{\bm{x}}_+ \|_2^2 \geq F(\bm{x}_+) + \frac{1}{2a} \| \bm{x}^* - \bvec{\bm{x}}_+ \|_2^2.
\end{equation}
Combining \eqref{label_058} with \eqref{label_059} and rearranging terms using $a > 0$ gives
\begin{equation} \label{label_060}
\frac{1}{2}\| \bm{x}_+ - \bm{x^*} \|_2^2 \leq \frac{1}{2}(1 - a \mu)\| \bvec{x} - \bm{x}^* \|_2^2 + a \left( F^* - F(\bm{x}_+) \right) .
\end{equation}
If the step size search condition is not satisfied, using \eqref{label_057} and the assumption $(\bm{H})_1 = h_F(\bvec{x})$ and $(\bm{G})_1 = g_F(\bvec{x})$, we obtain that
\begin{equation} \label{label_061}
F^* \geq F(\bm{x}_+) + \frac{1}{2 \tau_{-}} \| \bvec{x} - \bm{x}_{+} \|_2^2 + \frac{1}{\tau_{-}}\langle \bvec{x} - \bm{x}_{+}, \bm{x}^* - \bvec{x} \rangle + \frac{\mu}{2}\| \bvec{x} - \bm{x}^* \|_2^2 .
\end{equation}
Expanding and rearranging terms in \eqref{label_061} using $\tau_{-} > 0$ gives
\begin{equation} \label{label_062}
\frac{1}{2}\| \bm{x}_+ - \bm{x}^* \|_2^2 \leq \frac{1}{2} (1 - \mu \tau_{-} ) \| \bvec{x} - \bm{x}^* \|_2^2 + \tau_{-} ( F(\bm{x}^*) - F(\bm{x}_+) ) .
\end{equation}
Putting together \eqref{label_060} and \eqref{label_062} completes the proof.
\end{proof}
The conditions in Proposition~\ref{label_033_sc} relate to strong convexity in the following way.
\begin{proposition} \label{label_063}
If $\mu_f$ and $\mu_{\psi}$ are strong convexity parameters satisfying \eqref{label_004} for $f$ and $\Psi$, respectively, and the model is built exclusively by incorporating convex combinations of past iterates, then the conditions in Proposition~\ref{label_033_sc} are satisfied for every $\bm{x} \in X^*$.
\end{proposition}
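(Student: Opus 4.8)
The plan is to note first that, among the three hypotheses of Proposition~\ref{label_033_sc}, the fallback choices $\bm{x}_{-} = T_{L(\bvec{x})}(\bvec{x})$, $\tau_{-} = 1/(L(\bvec{x}) + \mu_{\Psi})$ and the first-entry requirement $(H)_1 = h_F(\bvec{x})$, $(G)_1 = g_F(\bvec{x})$ are definitional features of the method in the strongly convex regime described in Subsubsection~\ref{label_051}; the trivial convex combination placing full weight on $\bvec{x} = \bm{z}_1$ is admissible, so these are automatically met. The only substantive condition left to verify is the growth inequality \eqref{label_057}, which I aim to establish with $\mu = \mu_f + \mu_{\Psi}$. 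Since $\max \{ \bm{H} + \bm{G}^T \bm{x}^* \}$ is the largest of the affine pieces of the model evaluated at $\bm{x}^*$, it suffices to show that \emph{every} piece obeys $(\bm{H})_l + \langle (\bm{G})_l, \bm{x}^* \rangle \leq F^* - \frac{\mu}{2} \| \bvec{x} - \bm{x}^* \|_2^2$; taking the maximum over $l$ then delivers \eqref{label_057}.

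Next I would expand a generic piece. By hypothesis each entry $(\bm{H})_l$ and column $(\bm{G})_l$ is a convex combination, with weights $\alpha_i \geq 0$ and $\sum_i \alpha_i = 1$, of the gradient-mapping data $h_F(\bm{x}_i)$ and $g_F(\bm{x}_i)$ attached to past iterates $\bm{x}_i$, so that $(\bm{H})_l + \langle (\bm{G})_l, \bm{x}^* \rangle = \sum_i \alpha_i \left( h_F(\bm{x}_i) + \langle g_F(\bm{x}_i), \bm{x}^* \rangle \right)$. Each anchor $\bm{x}_i$ satisfies the Lipschitz line-search condition \eqref{label_019}, so Lemma~\ref{label_053} applies at $\bm{x} = \bm{x}_i$, $\bm{y} = \bm{x}^*$; combined with the definition \eqref{label_055} of $h_F$ it yields the per-point bound $h_F(\bm{x}_i) + \langle g_F(\bm{x}_i), \bm{x}^* \rangle \leq F^* - \frac{\mu}{2} \| \bm{x}_i - \bm{x}^* \|_2^2$. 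Convexity of the weights then reduces the task to replacing each quadratic $\| \bm{x}_i - \bm{x}^* \|_2^2$ by the single quadratic $\| \bvec{x} - \bm{x}^* \|_2^2$ that appears in \eqref{label_057}.

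This replacement is the key step and the main obstacle, and it is precisely where the restriction to past iterates is essential. The iterate-to-optimum distance is non-expansive, $\| \bm{x}_{i+1} - \bm{x}^* \|_2 \leq \| \bm{x}_i - \bm{x}^* \|_2$: in the step-size-search branch of \eqref{label_029} this is the inequality of Theorem~\ref{label_033} read with $\bm{y} = \bm{x}^*$ and $F(\bm{x}_{i+1}) \geq F^*$, while in the fallback branch $\bm{x}_{i+1} = T_{L}(\bm{x}_i)$ and the same contraction follows from Lemma~\ref{label_053} applied at $\bm{x}_i$. Neither branch invokes the growth property \eqref{label_057}, so no circularity with Proposition~\ref{label_033_sc} arises. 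Since $\bvec{x}$ is the most recent of the iterates indexing the model, monotonicity gives $\| \bm{x}_i - \bm{x}^* \|_2 \geq \| \bvec{x} - \bm{x}^* \|_2$ for every anchor; substituting this into each per-point bound and using $\sum_i \alpha_i = 1$ produces $(\bm{H})_l + \langle (\bm{G})_l, \bm{x}^* \rangle \leq F^* - \frac{\mu}{2} \| \bvec{x} - \bm{x}^* \|_2^2$, and the maximum over $l$ completes the verification of \eqref{label_057}. The argument uses only $F(\bm{x}_i) \geq F^* = F(\bm{x}^*)$ and Lemma~\ref{label_053} at a minimizer, both of which hold for any optimal point, so the conclusion is valid for every $\bm{x}^* \in X^*$.
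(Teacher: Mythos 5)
Your proposal is correct and follows essentially the same route as the paper's proof: the per-anchor strongly convex lower bound from Lemma~\ref{label_053} evaluated at $\bm{x}^*$, the monotone contraction of the iterate-to-optimum distance obtained by iterating Theorem~\ref{label_033}, and the preservation of the resulting inequality under convex combinations followed by the maximum over the model entries. Your explicit per-piece weights $\alpha_i$ are just the rows of the paper's combination matrix $\bm{S}$, and your remark on non-circularity, together with anchoring the quadratic at $\bvec{x}$ rather than $\bm{x}_+$, is if anything slightly more careful than the paper's own write-up.
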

\begin{proof}
From \eqref{label_054} in Lemma~\ref{label_018} we have for all $\bm{y} \in \mathbb{R}^n$ that
\begin{equation} \label{label_064}
F(\bm{y}) \geq h_F(\bm{z}_i) + \langle g_F(\bm{z}_i), \bm{y} \rangle + \frac{\mu}{2} \| \bm{y} - \bm{z}_i \|_2^2, \quad i \in \{ 1, ..., m \} .
\end{equation}

Because each $\bm{z}_i$ is either the current or a previous iterate, we can iteratively apply Theorem~\ref{label_033} with $\bm{y} \in X^*$ and obtain
\begin{equation} \label{label_065}
\|\bm{x}_+ - \bm{x}^*\|_2 \leq \|\bm{z}_i - \bm{x}^*\|_2, \quad \bm{x}^* \in X^*.
\end{equation}
Using \eqref{label_065} in \eqref{label_064} with $\bm{y} \in X^*$ yields
\begin{equation} \label{label_066}
F^* \bm{1}_m \succeq H_F(\mathcal{Z}) + G_F(\mathcal{Z}) \bm{x}^* + \left(\frac{\mu}{2} \|\bm{x}_+ - \bm{x}^*\|_2^2\right) \bm{1}_m , \quad \bm{x}^* \in X^*,
\end{equation}
where $\bm{1}_m$ is the vector of all ones of size $m$.

By assumption, our model is built using a matrix with non-negative entries $\bm{S} \in \mathbb{R}_+^{p \times m}$, $p \leq m$, with $\bm{S}_{1,1} = 1$ and $\bm{S} \bm{1}_m = \bm{1}_p$, such that $\bm{H} = \bm{S} H_F(\mathcal{Z})$ and $\bm{G} = \bm{S} G_F(\mathcal{Z})$. Multiplying both sides of \eqref{label_066} by $\bm{S}$ and applying the vector maximum yields the desired result.
\end{proof}
Proposition~\ref{label_063} provides us with a means of modifying Algorithm~1 to attain a faster rate on composite objectives when \eqref{label_057} holds. Specifically, we alter lines~\ref{label_031} through \ref{label_032} to, respectively, become:
\begin{align}
\tau_{k + 1} &= 1 / (L_{k + 1} + \mu_{\psi}), \label{label_067}\\
\bvec{g}_{k + 1} &= (L_{k + 1} + \mu_{\psi}) (\bm{x}_k - \tvec{x}_{k + 1}), \\
\bar{h}_{k + 1} &= F(\bm{x}_{k + 1}) + \frac{1}{2 (L_{k + 1} + \mu_{\psi})} \|\bvec{g}_{k + 1}\|_2^2 - \langle \bvec{g}_{k + 1}, \bm{x}_k \rangle . \label{label_068}
\end{align}
Note that for $\mu_{\Psi} = 0$, Algorithm~1 is not altered. Therefore, our modification is merely a generalization meant to take advantage of a relaxed form of strong convexity in $\Psi$, if present.

\begin{theorem} \label{label_069}
If the objective $F$ and the model update procedure in the generalized version of Algorithm~1, incorporating \eqref{label_067} through \eqref{label_068}, satisfy the conditions in Proposition~\ref{label_033_sc}, then the produced iterates exhibit a linear convergence rate in function values, given by
\begin{equation} \label{label_070}
F(x_{k}) - F^* \leq \frac{\| \bm{x}_0 -\bm{x}^* \|_2^2}{2 \sum_{i = 1}^{k} a_i \pi_i}
\leq \frac{\mu \left(1 - q_u \right)^k \| \bm{x}_0 -\bm{x}^* \|_2^2}
{1 - \left(1 - q_u \right)^k} , \quad k \geq 1,
\end{equation}
where $\pi_k \overset{\operatorname{def}}{=} \displaystyle \prod_{j = 1}^{k} \frac{1}{1 - a_j \mu}$ for all $k \geq 1$ and $q_u \overset{\operatorname{def}}{=} \frac{\mu}{L_u + \mu_{\Psi}}$.
\end{theorem}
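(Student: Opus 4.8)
The plan is to convert the one-step contraction of Proposition~\ref{label_033_sc} into a weighted telescoping sum, mirroring the passage from Theorem~\ref{label_033} to Theorem~\ref{label_041} but now tracking the strongly convex factor $(1 - \tau\mu)$. Writing $D_i \overset{\operatorname{def}}{=} \tfrac{1}{2}\|\bm{x}_i - \bm{x}^*\|_2^2 \geq 0$ and $\delta_i \overset{\operatorname{def}}{=} F(\bm{x}_i) - F^* \geq 0$, I would apply Proposition~\ref{label_033_sc} at iteration $i$ with $\bvec{x} = \bm{x}_{i-1}$, $\bm{x}_+ = \bm{x}_i$ and $\tau = a_i$ (both the accepted and the fallback branch of \eqref{label_029} assign $a_i$ to the relevant step size, and the hypotheses hold by assumption), obtaining
\begin{equation}
D_i \leq (1 - a_i\mu)\,D_{i-1} - a_i\,\delta_i, \quad i \geq 1.
\end{equation}

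Before telescoping I would record monotonicity $F(\bm{x}_i) \leq F(\bm{x}_{i-1})$: in the accepted branch $F(\bm{x}_i) \leq -d_{\bm{Q},\bm{B}}(\bm{\lambda}_i,a_i) = u_{\bar{\bm{\lambda}}}^* \leq u_{\bar{\bm{\lambda}}}(\bm{x}_{i-1}) = \langle \bar{\bm{\lambda}}, \bm{H} + \bm{G}^T \bm{x}_{i-1}\rangle \leq l_F(\bm{x}_{i-1}) \leq F(\bm{x}_{i-1})$, while in the fallback branch the descent rule for $T_L$ gives the same. Then, using the defining identity $\pi_i(1 - a_i\mu) = \pi_{i-1}$ (with $\pi_0 = 1$), I multiply the recursion by $\pi_i$ and telescope over $i = 1,\dots,k$,
\begin{equation}
\pi_i D_i + \pi_i a_i \delta_i \leq \pi_{i-1} D_{i-1}, \qquad \pi_k D_k + \sum_{i=1}^{k} \pi_i a_i \delta_i \leq D_0 = \tfrac{1}{2}\| \bm{x}_0 - \bm{x}^* \|_2^2 .
\end{equation}
Dropping $\pi_k D_k \geq 0$ and using $\delta_i \geq \delta_k$ (monotonicity) to factor out $\delta_k$ gives $\delta_k \sum_{i=1}^k a_i\pi_i \leq D_0$, which is exactly the first inequality of \eqref{label_070}.

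The step that needs care, and which I expect to be the main obstacle, is the legitimacy of multiplying by $\pi_i$: this preserves the inequality only if every $\pi_i > 0$, i.e. only if $a_j\mu < 1$ for all $j \leq k$. Since the step-size search imposes no explicit cap, I would extract this from the recursion itself. If some $a_i\mu \geq 1$ occurs while $D_{i-1} > 0$, the right-hand side of the recursion is non-positive, forcing $D_i = 0$, that is $\bm{x}_i \in X^*$; by monotonicity $F(\bm{x}_k) = F^*$ for all subsequent $k$ and the claimed bound is trivial. Hence in the non-degenerate regime $a_j\mu < 1$ throughout, so each $\pi_i$ is positive and finite and the telescoping is valid.

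Finally, for the explicit geometric rate I would bound $\sum_{i=1}^k a_i\pi_i$ from below. The fallback step size together with the Lipschitz search gives $a_i \geq 1/(L_{i}+\mu_{\Psi}) \geq 1/(L_u + \mu_{\Psi})$, whence $a_i\mu \geq q_u$, so $1 - a_i\mu \leq 1 - q_u$ and $\pi_i \geq (1 - q_u)^{-i}$. Summing the resulting geometric series and using $(L_u + \mu_{\Psi})\,q_u = \mu$,
\begin{equation}
\sum_{i=1}^{k} a_i\pi_i \geq \frac{1}{L_u + \mu_{\Psi}}\sum_{i=1}^{k}(1 - q_u)^{-i} = \frac{(1 - q_u)^{-k} - 1}{(L_u + \mu_{\Psi})\,q_u} = \frac{(1 - q_u)^{-k} - 1}{\mu} .
\end{equation}
Substituting into the first inequality and multiplying numerator and denominator by $(1-q_u)^k$ yields the stated second bound in \eqref{label_070}; in fact this estimate carries an extra factor of $2$ in the denominator, so the displayed inequality holds with room to spare.
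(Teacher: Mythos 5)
Your proof follows essentially the same route as the paper's: apply Proposition~\ref{label_033_sc} at each step to get the contraction $\tfrac{1}{2}\|\bm{x}_i-\bm{x}^*\|_2^2 \leq \tfrac{1}{2}(1-a_i\mu)\|\bm{x}_{i-1}-\bm{x}^*\|_2^2 + a_i(F^*-F(\bm{x}_i))$, multiply by $\pi_i$ so that it telescopes, discard the final distance term, and lower-bound $\sum_i a_i\pi_i$ by a geometric series using $a_i \geq 1/(L_u+\mu_\Psi)$. The additional details you supply --- the explicit monotonicity argument needed to factor $F(\bm{x}_k)-F^*$ out of the weighted sum, the positivity of the $\pi_i$, and the observation that the derivation actually yields the stated second bound with an extra factor of $2$ to spare --- are correct and are points the paper's proof leaves implicit.
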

\begin{proof}
Taking Proposition~\ref{label_033_sc} with $\bvec{x} = \bm{x}_i$ for $i \geq 0$ we obtain
\begin{equation} \label{label_071}
\frac{1}{2} \| \bm{x}_{i + 1} -\bm{x}^* \|_2^2 \leq \frac{1}{2}(1 - a_{i + 1} \mu) \| \bm{x}_i -\bm{x}^* \|_2^2 + a_{i + 1} (F^* - F(\bm{x}_{i + 1}) ) .
\end{equation}
Multiplying both sides of \eqref{label_071} with $\pi_{i + 1}$ produces
\begin{equation} \label{label_072}
\frac{\pi_{i + 1}}{2} \| \bm{x}_{i + 1} -\bm{x}^* \|_2^2 \leq \frac{\pi_i}{2} \| \bm{x}_i -\bm{x}^* \|_2^2 + a_{i + 1} \pi_{i + 1} (F^* - F(\bm{x}_{i + 1}) ) .
\end{equation}
Adding together \eqref{label_072} for $i \in \{0, ..., k - 1 \}$ and canceling matching terms yields
\begin{equation} \label{label_073}
\left( \sum_{i = 1}^{k} a_i \pi_i \right) (F(\bm{x}_k) - F^*) \leq \frac{1}{2} \| \bm{x}_{0} -\bm{x}^* \|_2^2 - \frac{\pi_k}{2} \| \bm{x}_{k} -\bm{x}^* \|_2^2 .
\end{equation}
Discarding the $(\pi_k / 2) \| \bm{x}_{k} -\bm{x}^* \|_2^2$ term gives the first inequality in \eqref{label_070}.
The $a_i \pi_i$ terms are increasing in $a_i$ and applying $a_i \geq \tau_u \overset{\operatorname{def}}{=} 1 / (L_u + \mu_{\Psi})$ we get
\begin{equation} \label{label_074}
a_i \pi_i \geq \tau_u \left( 1 - \mu \tau_u \right)^{-i}, \quad i \geq 1.
\end{equation}
Summing up \eqref{label_074} for $i \in \{1, ..., k - 1 \}$ we obtain the sum of a geometric progression which applied to the former inequality in \eqref{label_070} yields the latter one.
\end{proof}

\subsection{Model overhead} \label{label_075}

In Algorithm~\ref{label_030}, the model overhead comprises the inner problem setup followed by the generation of approximate solutions for several values of the step size $a_{k + 1}$, decreasing until fit. The inner problem setup at iteration $k \geq 0$ entails the update of the model $\bm{Z}_{k + 1}$ and $\bm{H}_{k + 1}$ as well as of the compound quantities $\bm{Q}_{k + 1}$ and $\bm{B}_{k + 1}$. The model can be updated directly from the output of the first state and hence the complexity of the setup lies in the update of the compounds. As discussed in our precursor work \cite{ref_006}, the update of $\bm{Q}_{k + 1}$ can be performed recursively and in place. The corresponding computational load is dominated by the matrix-vector product $\bm{G}_k \bar{\bm{g}}_{k + 1}$ with complexity $\mathcal{O}(p_{k+ 1} n)$ FLOPs. The update of $\bm{B}_{k + 1}$ is also expensive because of the need to compute $\bm{G}_{k + 1} \bm{x}_{k}$, also with complexity $\mathcal{O}(p_{k+ 1} n)$.

To obtain an approximate solution $\bm{\lambda}_{\bm{Q}_{k + 1}, \bm{B}_{k + 1}}(A, \delta)$, which is a $p_{k + 1}$ dimensional QP, one may resort to any optimization scheme. The accuracy of the algorithm output does not affect the worst-case rate of Algorithm~\ref{label_030}. Therefore, we assume that the complexity of the inner problem is bounded by a small constant.

However, every inner problem instance is accompanied by the recomputation of $F(\bm{x}_{k + 1})$, needed to evaluate the step size search stopping criterion. Therefore, each step size search iteration incurs an additional oracle cost aside from the otherwise negligible cost of the inner problem, which is notably separate from the backtrack overhead in the Lipschitz search.

\section{Accelerating the Efficient Gradient Method with Memory} \label{label_076}

We have seen in Theorem~\ref{label_041} that for fixed-point schemes, a convergence guarantee can be computed directly from the step sizes. Higher rates in accelerated methods can be obtained by maintaining the convergence guarantee explicitly as part of an \emph{estimate function}.\footnote{The results in this section pertaining to the most general composite problem class have been previously presented in \cite{ref_006}. More detailed derivations can be found therein.}

\subsection{The general case} \label{sec:algo-nsc}

\subsubsection{Estimate functions}
At least two different forms of estimate functions have been proposed in the literature (e.g., \cite{ref_013,ref_014}). To accommodate models with memory, we propose a modification based on both forms, which allows the starting point to be infeasible as in \cite{ref_014} while retaining the estimate sequence property according to \cite{ref_013}, as follows:
\begin{equation} \label{label_077}
\psi_{k}(\bm{x}) = W_k(\bm{x}) + \frac{1}{2 A_k} \| \bm{x} - \bm{x}_0 \|_2^2, \quad \bm{x} \in \mathbb{R}^n,
\end{equation}
where $\psi_{k}$ is the estimate function at the beginning of iteration $k \geq 1$. Here, $W_k$ can be any expression that satisfies $W_k(\bm{x}^*) \leq F(\bm{x}^*)$, with $\bm{x}^*$ being a specific point in the optimal set, which we considered fixed. A sufficient condition is that $W_k$ is a global lower bound on the composite objective $F$. The quantity $A_k$ takes on the role convergence guarantee if the following estimate sequence property is maintained at every iteration:
\begin{equation} \label{label_078}
F(\bm{x}_k) \leq \psi_k^* \overset{\operatorname{def}}{=} \min_{\bm{x} \in \mathbb{R}^n} \psi_k(\bm{x}), \quad k \geq 1 .
\end{equation}
The proof follows directly from the definition as
\begin{equation} \label{label_079}
\begin{gathered}
F(\bm{x}_k) - F(\bm{x}^*) \leq \psi_k^* - F(\bm{x}^*) \leq \psi_k(\bm{x}^*) - F(\bm{x}^*) \\
\leq F(\bm{x}^*) + \frac{1}{2 A_k} \| \bm{x}^* - \bm{x}_0 \|_2^2 - F(\bm{x}^*) = \frac{1}{2 A_k} \| \bm{x}^* - \bm{x}_0 \|_2^2 .
\end{gathered}
\end{equation}
The structure of the estimate function in \eqref{label_077} reveals a simple way of improving accelerated algorithms: the tightening of $W_k$ around $\bm{x}_k$, which permits the increase of the guarantee $A_k$ without violating the estimate sequence property in \eqref{label_078}.

\subsubsection{Piece-wise linear lower bounds} Gradient Methods with Memory maintain a piece-wise linear lower bound on the objective function leading to a marked performance increase over the Gradient Method (GM). This bound also has the tightness that we seek. Therefore, we propose an estimate function built around a piece-wise linear lower model on the objective $F$, determined by $\bm{H}$ and $\bm{G}$, and further parametrized by the convergence guarantee $A > 0$ of the form
\begin{equation} \label{label_080}
\tilde{\psi}(\bm{x}) = \max\left\{ \bm{H} + \bm{G}^T \bm{x} \right\} + \frac{1}{2 A} \| \bm{x} - \bm{x}_0 \|_2^2, \quad \bm{x} \in \mathbb{R}^n .
\end{equation}
The constraints imposed of $\bm{H}$ and $\bm{G}$ will be discussed in Subsubsection~\ref{label_076_model}.
The estimate sequence property in \eqref{label_078} becomes
\begin{equation} \label{label_078_opt_prob}
F(\bm{x}_{+}) \leq \tilde{\psi}^* = \min_{\bm{x} \in \mathbb{R}^n} \max_{\bm{\lambda} \in \Delta_m} \left\{ \langle \bm{\lambda}, \bm{H} + \bm{G}^T \bm{x} \rangle + \frac{1}{2 A} \| \bm{x} - \bm{x}_0 \|_2^2 \right\}.
\end{equation}
The right-hand side is an optimization problem. It's anti-dual is given by
\begin{equation} \label{label_082}
\min_{\bm{\lambda} \in \Delta_m} \left\{d_{\bm{Q},\bm{C}}(\bm{\lambda}, A) \overset{\operatorname{def}}{=} \frac{A}{2} \langle \bm{\lambda}, \bm{Q} \bm{\lambda} \rangle - \langle \bm{C}, \bm{\lambda} \rangle \right\} ,
\end{equation}
with $\bm{Q} \overset{\operatorname{def}}{=} \bm{G}^T \bm{G}$ and $\bm{C} \overset{\operatorname{def}}{=} \bm{H} + \bm{G}^T \bm{x}_0$. The anti-dual is a QP with a structure identical to \eqref{label_028}. An approximate solution is given by $\bm{\lambda}_{\bm{Q}, \bm{C}}(A)$. Because an exact solution of \eqref{label_082} is most often unattainable, we cannot reliably compute the optimal value of the estimate function $\tilde{\psi}^*$ in \eqref{label_080} to enforce the estimate sequence property in \eqref{label_078_opt_prob}. Instead, we can define a simpler estimate function around an approximate solution $\bm{\lambda} = \bm{\lambda}_{\bm{Q}, \bm{C}}(A)$ as
\begin{equation} \label{label_083}
\psi(\bm{x}) = \langle \bm{\lambda}, \bm{H} + \bm{G}^T \bm{x} \rangle + \frac{1}{2 A} \| \bm{x} - \bm{x}_0 \|_2^2, \quad \bm{x} \in \mathbb{R}^n ,
\end{equation}
with $\psi^* = -d_{\bm{Q},\bm{C}}(\bm{\lambda}, A)$.

\subsubsection{The middle problem} Having defined the lower bound and estimate function, we need to find a means of increasing the convergence guarantee. As opposed to the outer problem in \eqref{label_001} (where the variable is $\bm{x}$) and the inner problem in \eqref{label_078_opt_prob} (with the variable given by $\bm{\lambda}$), this constitutes a middle problem (with variable $A$). To facilitate derivations, we temporarily assume that the inner problem can be solved exactly, with the solution written as $\bvec{\lambda}(A)$, and then extend the middle scheme to the inexact case. The task becomes a root finding problem for the function
\begin{equation} \label{label_084}
\Gamma(A) \overset{\operatorname{def}}{=} -d_{\bm{Q},\bm{C}}(\bvec{\lambda}(A), A) - F(\bm{x}_{+}), \quad A > 0.
\end{equation}
An efficient method for solving this problem is the Newton method, with the update at inner iteration $t \geq 0$ given by $A^{(t + 1)} = A^{(t)} - \Gamma(A^{(t)}) / \Gamma'(A^{(t)}) .$
Our exactness assumption allows us to apply Danskin's lemma and obtain
$\Gamma'(A) = -\frac{1}{2} \langle \bvec{\lambda}(A), \bm{Q} \bvec{\lambda}(A) \rangle$. The Newton iterations can be written in closed form as
\begin{equation} \label{label_085}
A^{(t + 1)} = A^{(t)} + 2 \frac{\Gamma(A^{(t)})}{\langle \bm{\lambda}(A^{(t)}), \bm{Q} \bm{\lambda}(A^{(t)}) \rangle} .
\end{equation}
In the exact case, Newton's method has a quadratic convergence rate. Inexactness degrades these guarantees but does not affect the validity of the procedure as long as we ensure feasibility. Namely, we enforce the estimate sequence property at every Newton iteration, that is $\Gamma(A^{(t)}) \geq 0$ for $t \geq 0$. The denominator is non-negative, with a value of zero meaning an exact solution to the outer problem has already been found. Therefore, a Newton update will only lead to an \emph{increase} in convergence guarantees and the estimate sequence property provides and effective stopping criterion for the algorithm (aside from the zero denominator check). The entire middle method is summarized in Algorithm~\ref{label_086}, with parameters $\bm{Q} \in \mathbb{R}^{p \times p}$, $\bm{C} \in \mathbb{R}^{p}$, $F(\bm{x}_+) < \infty$, $\bm{\lambda}^{(0)} \in \mathbb{R}^{p}$ and $A^{(0)} > 0$.

\begin{algorithm}[h!]
\caption{The middle method \newline{}
Newton($\bm{Q}, \bm{C}, F(\bm{x}_+), \bm{\lambda}^{(0)}, A^{(0)} )$}
\label{label_086}
\begin{algorithmic}[1]
\STATE $\bm{\lambda}_{\operatorname{valid}} := \bm{\lambda}^{(0)}$
\STATE $A_{\operatorname{valid}} := A := A^{(0)}$
\FOR{$t = 0, \ldots{}, N - 1$}
\STATE $\bm{\lambda} := \bm{\lambda}_{\bm{Q}, \bm{C}}(A)$ with starting point $\bm{\lambda}^{(0)}$
\STATE $\psi^* := \langle \bm{C}, \bm{\lambda} \rangle - \frac{A}{2} \langle \bm{\lambda}, \bm{Q} \bm{\lambda} \rangle$
\IF {$\psi^* < F(\bm{x}_+)$}
\STATE Break from loop
\ENDIF
\STATE $\bm{\lambda}_{\operatorname{valid}} := \bm{\lambda}$
\STATE $A_{\operatorname{valid}} := A$
\STATE $A := A + 2 \frac{\psi^* - F(\bm{x}_+)}{\langle \bm{\lambda}, \bm{Q} \bm{\lambda} \rangle}$
\ENDFOR
\RETURN $\bm{\lambda}_{\operatorname{valid}}, A_{\operatorname{valid}}$
\end{algorithmic}
\end{algorithm}

With the middle method formulated, it remains to construct a model and determine a way to initialize the Newton method with a large enough but feasible value of $A$.

\subsubsection{Test points and weights} Fixed-point schemes impose two unnecessary constraints on the iterates and weights: the iterates are always chosen among the points where the oracle is called (test points) and the weights $a_{k + 1}$ with $k \geq 0$, correspond to step sizes taken using the results of those oracle calls. The Fast Gradient Method (FGM) attains the asymptotically optimal rate (up to a constant) on the composite problem class by discarding these assumptions. Instead, each test point is given by the weighted averages between an iterate and an estimate function optimal point. When dealing with non-strongly convex objectives, we have
\begin{equation} \label{label_087}
\bm{y}_{k + 1} = \frac{1}{A_{k} + a_{k + 1}}(A_k \bm{x}_k + a_{k + 1} \bm{v}_k), \quad k \geq 0.
\end{equation}
where $\bm{v}_k = \argmin_{\bm{z} \in \mathbb{R}^n} \psi_k(\bm{z})$, $k \geq 1$. The initial estimate function is undefined and we set $\bm{v}_0 = \bm{x}_0$. Each estimate function $W_{k + 1}$ term, $k \geq 0$, is likewise obtained as the weighted average of the previous $W_k$ term and $w_{k + 1}$, a simple lower bound on $F$ at the point $\bm{x}^*$, with weights $A_k$ and $a_{k + 1} > 0$, respectively. The lower bound $w_{k + 1}$ is simple because it is constructed using the oracle output obtained during a single iteration. Note that accelerated methods differ substantially from the Gradient Method because they maintain an aggregate lower bound of \emph{all} past information (without necessarily storing a bundle) as opposed to using only the more recently acquired simple bound. The values $a_{k + 1}$ no longer correspond to step sizes, but instead become weights applied to simple lower bounds.

The algorithm that produces the largest weights and highest worst-case convergence guarantees currently known on the composite problem class is the Accelerated Composite Gradient Method (ACGM)~\cite{ref_008,ref_009}. ACGM not only displays excellent theoretical and practical performance, but includes several accelerated first-order methods as particular cases. Due to its aggressive weight update, generality and performance, we choose this method as a template when designing our methods.

ACGM has the convergence guarantee given by $A_{k + 1} = A_k + a_{k + 1}$, $k \geq 0$, just as in Theorem~\ref{label_041}, with the weight given by
\begin{equation} \label{label_088}
L_{k + 1} a_{k + 1}^2 = a_{k + 1} + A_{k}, \quad k \geq 0.
\end{equation}
We set the update rule in \eqref{label_088} as the starting point for the convergence guarantee update and mandate that
\begin{equation} \label{label_089}
A_{k + 1} \geq A_k + a_{k + 1}, \quad k \geq 0.
\end{equation}

\subsubsection{Model} \label{label_076_model}

Now that we have identified the conditions on the weights and test points required to achieve a state-of-the-art worst-case rate, we can formulate the model.

Before we begin, we need to address the special case that arises in the first iteration, corresponding to $k = 0$. Acceleration cannot be applied at this point because the oracle call history is empty. A bundle cannot be utilized either. Therefore, it only remains to perform a simple GM step, which for the same reasons coincides with the first step of FGM. We thus have $\bm{v}_0 = \bm{x}_0$, $\bm{y}_1 = \bm{x}_0$, $\bm{x}_1 = T_{L_1}(\bm{x}_0)$. The model becomes $\bm{H}_1 = h_1$, $\bm{G}_1 = g_1$. The inner problem has the unique exact solution $\lambda_1 = 1$. We set $A_0 = 0$ because $F(\bm{x}_0)$ may be unbounded and cannot be subject to any guarantee. The preliminary choice $A_1 = a_1 = 1 / L_1$ leads to $\psi_1^* = \bar{h}_1 + \langle \bvec{g}_1, \bm{x}_0 \rangle- (A_1 / 2) \| \bvec{g}_1 \|_2^2 = F(\bm{x}_1)$. This value of $A_1$ is an exact root of $\Gamma$ and cannot be increased. Therefore, the middle scheme in Algorithm~\ref{label_086} is not called.

At the start of the second and latter iterations ($k \geq 1$), an estimate function $\psi_k$ is already available, having been constructed using \eqref{label_083}. This function is uniquely determined by 3 parameters: $A_k$, $h_k \overset{\operatorname{def}}{=} \langle \bm{\lambda}_k, \bm{H}_k \rangle$ and $\bm{g}_k \overset{\operatorname{def}}{=} \bm{G}_k \bm{\lambda}_k$. We have seen that the middle scheme requires a feasible starting point. The lower bound weighted average condition suggests some simple constraints that can be imposed on our model to achieve this. The constraints need to affect only \emph{two} entries in our model (consequently, we only call the middle method when $p_{k + 1} \geq 2$) and the starting point of the inner optimization scheme. Specifically, we impose for all $k \geq 1$ that
\begin{equation} \label{label_091}
\begin{aligned}
&(\bm{H}_{k + 1})_1 = h_k, & \quad & (\bm{G}_{k + 1})_1 = \bm{g}_k, \\
&(\bm{H}_{k + 1})_2 = h_F(\bm{y}_{k + 1}), & \quad & (\bm{G}_{k + 1})_2 = g_F(\bm{y}_{k + 1}) .
\end{aligned}
\end{equation}
The starting point of the innermost scheme $\bm{\lambda}_{k + 1}^{(0)}$ is set as
\begin{equation} \label{label_092}
(\bm{\lambda}^{(0)}_{k + 1})_i = \frac{1}{A_k + a_{k + 1}}
\left\{ \begin{array}{ll} A_k ,& i = 1, \\ a_{k + 1} ,& i = 2, \\ 0, & i \in \{ 3, ..., p \}. \end{array} \right.,\quad k \geq 1.
\end{equation}
The above assumptions with be validated in the convergence analysis.

With the outer, middle and inner problems defined, we construct an efficient Accelerated Gradient Method with Memory using the same Armijo-style line-search as in the first stage of Algorithm~\ref{label_030} iterations. Note that the middle scheme takes on the role performed by the step size search procedure in Algorithm~\ref{label_030} but benefits from the parameter-free quality of Newton's rootfinding algorithm. Our method is listed in Algorithm~\ref{label_093}.

\begin{algorithm}[h!]
\caption{An Efficient Accelerated Gradient Method with Memory for Composite Problems}
\label{label_093}
\begin{algorithmic}[1]
\STATE \bd{Input:} $\bm{x}_0 \in \mathbb{R}^n$, $L_0 > 0$, $r_u > 1 \geq r_d > 0$, $T > 0$
\STATE $\bm{v}_0 = \bm{x}_0$, $A_0 = 0$
\FOR{$k = 0,\ldots{},T-1$}
\STATE $L_{k + 1} := r_d L_k$
\LOOP
\STATE $a_{k + 1} := \frac{1 + \sqrt{1 + 4 L_{k + 1} A_k}}{2 L_{k + 1}}$ \label{label_094}
\STATE $\bm{y}_{k + 1} := \frac{1}{A_k + a_{k + 1}}(A_k \bm{x}_k + a_{k + 1} \bm{v}_k)$ \label{label_095}
\STATE $\bm{x}_{k + 1} := T_{L_{k + 1}}(\bm{y}_{k + 1})$ \label{label_096} \\[1mm]

\IF {$f(\bm{x}_{k + 1}) \leq f(\bm{y}_{k + 1}) + \langle \grad{f}(\bm{y}_{k + 1}), \bm{x}_{k + 1} - \bm{y}_{k + 1} \rangle + \frac{L_{k + 1}}{2} \| \bm{x}_{k + 1} - \bm{y}_{k + 1} \|_2^2$} \label{label_097}
\STATE Break from loop
\ELSE
\STATE $L_{k + 1} := r_u L_{k + 1}$
\ENDIF
\ENDLOOP
\STATE $\bar{\bm{g}}_{k + 1} = L_{k + 1} (\bm{y}_{k + 1} - \bm{x}_{k + 1})$
\STATE $\bar{h}_{k + 1} = F(\bm{x}_{k + 1}) + \frac{1}{2 L_{k + 1}} \|\bar{\bm{g}}_{k + 1}\|_2^2 - \langle \bar{\bm{g}}_{k + 1}, \bm{y}_{k + 1} \rangle$
\IF {$k = 0$}
\STATE $\bm{H}_1 = \bar{h}_{1}$, $\bm{G}_1 = \bvec{g}_1$, $\bm{Q}_1 = \| \bvec{g}_1 \|_2^2$, $\bm{C}_1 = \bar{h}_{1} + \langle \bvec{g}_1, \bm{x}_0 \rangle$, $\bm{\lambda}_{1} = 1$, $A_1 = a_1$
\ELSE
\STATE Generate $\bm{H}_{k + 1}$ and $\bm{G}_{k + 1}$ to satisfy \eqref{label_091}
\STATE Generate $\bm{Q}_{k + 1}$ to equal $\bm{G}_{k + 1}^T \bm{G}_{k + 1}$
\STATE Generate $\bm{C}_{k + 1}$ to equal $\bm{H}_{k + 1} + \bm{G}_{k + 1}^T \bm{x}_0$
\STATE Generate $\bm{\lambda}^{(0)}_{k + 1}$ according to \eqref{label_092}
\STATE $\bm{\lambda}_{k + 1}, A_{k + 1} = \operatorname{Newton}(\bm{Q}_{k + 1}, \bm{C}_{k + 1}, F( \bm{x}_{k + 1}), \bm{\lambda}^{(0)}_{k + 1}, A_k + a_{k + 1})$
\ENDIF
\STATE $h_{k + 1} = \langle \bm{H}_{k + 1}, \bm{\lambda}_{k + 1} \rangle$
\STATE $\bm{g}_{k + 1} = \bm{G}_{k + 1} \bm{\lambda}_{k + 1}$
\STATE $\bm{v}_{k + 1} = \bm{x}_0 - A_{k + 1} \bm{G}_{k + 1}^T \bm{\lambda}_{k + 1}$
\ENDFOR
\end{algorithmic}
\end{algorithm}

\subsubsection{Convergence analysis} \label{label_098}

We have seen in \eqref{label_079} that the estimate functions ensure a worst-case convergence rate as long as the estimate sequence property in \eqref{label_078} is maintained at every iteration by Algorithm~\ref{label_093}. For $k = 1$, \eqref{label_078} is satisfied with equality. It remains to show that \eqref{label_078} carries over to subsequent iterations. The estimate function is outputted by the middle scheme, which is able to preserve the estimate sequence property only if it holds in its initial state. Therefore, we must first verify the feasibility of the starting conditions of the middle scheme, for any algorithmic state.

We use the following notation for the estimate function at iteration $k \geq 1$ just before the middle scheme is invoked:
\begin{equation} \label{label_099}
\bar{\psi}_{k + 1}(\bm{x}) \overset{\operatorname{def}}{=} \langle \bm{\lambda}_{k + 1}^{(0)}, \bm{H}_{k + 1} + \bm{G}_{k + 1}^T \bm{x} \rangle + \frac{1}{2 (A_k + a_{k + 1})}\| \bm{x} - \bm{x}_0 \|_2^2, \quad \bm{x} \in \mathbb{R}^n.
\end{equation}
We prove the feasibility condition as follows.
\begin{theorem} \label{label_100}
If at iteration $k \geq 1$ we have that $\psi_k^* \geq F(\bm{x}_k)$, then $\bar{\psi}_{k + 1}^* \geq F(\bm{x}_{k + 1})$.
\end{theorem}
\begin{proof}
Throughout this proof we consider every $\bm{x} \in \mathbb{R}^n$ and $k \geq 1$. Our conditions on the model in \eqref{label_091} and \eqref{label_092} allow us to expand \eqref{label_099} as
\begin{align}
(A_k + a_{k + 1}) \bar{\psi}_{k + 1}(\bm{x})
&= A_k (h_k + \langle \bm{g}_k, \bm{x} \rangle ) + a_{k + 1} ( \bar{h}_{k + 1} + \langle \bar{\bm{g}}_{k + 1}, \bm{x} \rangle ) + \frac{1}{2} \| \bm{x} - \bm{x}_0 \|_2^2 \nonumber \\
&= A_k \psi_k(\bm{x}) + a_{k + 1} ( \bar{h}_{k + 1} + \langle \bar{\bm{g}}_{k + 1}, \bm{x} \rangle ) . \label{label_101}
\end{align}
The previous estimate function $\psi_k$ can be written as a parabola, given by
\begin{equation} \label{label_102}
\psi_k(\bm{x}) = \psi_k^* + \frac{1}{A_k} \| \bm{x} - \bm{v}_k \|_2^2 .
\end{equation}
Expanding $\psi_k(\bm{x})$ in \eqref{label_101} using \eqref{label_102} and taking the minimum on both sides yields
\begin{align}
(A_k + a_{k + 1}) \bar{\psi}_{k + 1}^*
& = A_k \psi_k^* + \min_{\bm{x} \in \mathbb{R}^n} \left\{ a_{k + 1} (\bar{h}_{k + 1} + \langle \bar{\bm{g}}_{k + 1}, \bm{x} \rangle ) + \frac{1}{2} \| \bm{x} - \bm{v}_k \|_2^2 \right\} \nonumber \\
& = A_k \psi_k^* + a_{k + 1} (\bar{h}_{k + 1} + \langle \bar{\bm{g}}_{k + 1}, \bm{v}_k \rangle ) - \frac{a_{k + 1}^2}{2} \| \bar{\bm{g}}_{k + 1} \|_2^2 . \label{label_103}
\end{align}
Using the assumption along with \eqref{label_020} we obtain
\begin{equation} \label{label_104}
\psi_k^* \geq F(\bm{x}_k) \geq \bar{h}_{k + 1} + \langle \bar{\bm{g}}_{k + 1}, \bm{x}_k \rangle .
\end{equation}
Applying \eqref{label_104} in \eqref{label_103} we get that
\begin{equation} \label{label_105}
(A_k + a_{k + 1})\bar{\psi}_{k + 1}^* \geq (A_k + a_{k + 1}) \bar{h}_{k + 1}
+ \langle \bar{\bm{g}}_{k + 1}, A_k \bm{x}_k + a_{k + 1} \bm{v}_k \rangle - \frac{a_{k + 1}^2}{2} \| \bar{\bm{g}}_{k + 1} \|_2^2 .
\end{equation}
We complete the proof by employing both \eqref{label_087} and \eqref{label_088} in \eqref{label_105}
as follows:
\begin{align}
(A_k + a_{k + 1})\bar{\psi}_{k + 1}^* \geq & (A_k + a_{k + 1}) \bar{h}_{k + 1}
+ \langle \bar{\bm{g}}_{k + 1}, (A_k + a_{k + 1}) \bm{y}_{k + 1} \rangle
\nonumber \\
& - \frac{A_k + a_{k + 1}}{2 L_{k + 1}} \| \bar{\bm{g}}_{k + 1} \|_2^2 = (A_k + a_{k + 1}) F(\bm{x}_{k + 1}).
\end{align}
\end{proof}
With a feasible starting point, the middle scheme will only produce iterates that satisfy the estimate sequence property in \eqref{label_078}. The Newton iterations will only increase the convergence guarantees, as much as possible, thereby ensuring \eqref{label_089}. We can now apply \eqref{label_079} to obtain
\begin{equation} \label{label_106}
F(\bm{x}_k) - F(\bm{x}^*) \leq \frac{1}{2 A_k} \| \bm{x}_0 - \bm{x}^* \|_2^2, \quad k \geq 1.
\end{equation}
The growth rate of the convergence guarantees is governed by \eqref{label_088} and \eqref{label_089}. These conditions are compatible with the worst-case results in \cite[Appendix~E]{ref_008}, stated as
\begin{equation} \label{label_107}
A_k \geq \frac{(k + 1)^2}{4 L_u} , \quad k \geq 1,
\end{equation}
yielding
\begin{equation}
F(x_k) - F(x^*) \leq \frac{2 L_u \| \bm{x}_0 - \bm{x}^* \|_2^2}{(k + 1)^2} , \quad k \geq 1.
\end{equation}

\subsection{Strong convexity}

Accelerated first-order methods can be modified to exhibit linear convergence when the objective is strongly convex. This procedure is described in detail for ACGM in \cite{ref_008,ref_009}. ACGM retains on this subclass the best known convergence guarantees and displays excellent practical performance. For these reasons, we use ACGM as a starting point for extending the efficient Accelerated Gradient Method with Memory to handle strongly convex objectives.

We therefore consider the special case stipulated in Lemma~\ref{label_053} as well as in Proposition~\ref{label_063}, wherein $f$ and $\Psi$ have strong convexity parameters $\mu_f \geq 0$ and $\mu_{\Psi} \geq 0$, respectively, with $\mu = \mu_f + \mu_{\Psi}$.

When constructing our method, the general form of the estimate functions remains the same as in \eqref{label_077}, with strong convexity affecting only the structure of the lower bounds, which according to \eqref{label_054} in Lemma~\ref{label_053} become
\begin{equation} \label{label_108}
F(\bm{y}) \geq h_F(\bm{z}_i) + \langle g_F(\bm{z}_i), \bm{y} \rangle + \frac{\mu}{2}\|\bm{y} - \bm{z}_i \|_2^2, \quad \bm{y} \in \mathbb{R}^n, \quad i \in \{ 1, ..., m \} ,
\end{equation}
with $h_F$ and $g_F$ generalized to handle strong convexity using \eqref{label_055} and \eqref{label_052}, respectively. To simplify the analysis, we center the quadratic term in each bound on the same point, the initial $\bm{x}_0$, and define for $i \in \{ 1, ..., m \}$ the quantities
\begin{equation} \label{label_109}
\bar{h}_F(\bm{z}_i) = h_F(\bm{z}_i) + \frac{\mu}{2} \left( \| \bm{z}_i \|_2^2 - \| \bm{x}_0 \|_2^2 \right) , \qquad
\bar{g}_F(\bm{z}_i) = g_F(\bm{z}_i) + \mu (\bm{x}_0 - \bm{z}_i) .
\end{equation}
The lower bounds in \eqref{label_108} can be expressed using \eqref{label_109} as
\begin{equation} \label{label_110}
F(\bm{y}) \geq \bar{h}_F(\bm{z}_i) + \langle \bar{g}_F(\bm{z}_i) , \bm{y} \rangle + \frac{\mu}{2}\|\bm{y} - \bm{x}_0 \|_2^2, \quad \bm{y} \in \mathbb{R}^n, \ i \in \{ 1, ..., m \} .
\end{equation}
The form of the lower bounds in \eqref{label_110} allows us to generalize the ideal estimate function in \eqref{label_080} as
\begin{equation} \label{label_111}
\tilde{\psi}(\bm{x}) = \max\left\{ \bm{H} + \bm{G}^T \bm{x} \right\} + \frac{1}{2}\left(\frac{1}{A} + \mu\right) \| \bm{x} - \bm{x}_0 \|_2^2, \quad \bm{x} \in \mathbb{R}^n .
\end{equation}
The necessary constraints on $\bm{H}$ and $\bm{G}$ require additional notation and will be stated in the sequel.

We see from \eqref{label_111} that strong convexity replaces $A$ in \eqref{label_080} with the quantity
$\sigma(A) \overset{\operatorname{def}}{=} A / (1 + \mu A)$.
The anti-dual problem arising from the estimate sequence property is now given by $d_{\bm{Q}, \bm{C}}(\bm{\lambda}, \sigma(A))$ with $d$ having the same expression as in \eqref{label_082}. The estimate function built around the approximate solution $\bm{\lambda} = \bm{\lambda}_{\bm{Q}, \bm{C}}(\sigma(A))$, with $\bm{Q} \overset{\operatorname{def}}{=} \bm{G}^T \bm{G}$ and $\bm{C} \overset{\operatorname{def}}{=} \bm{H} + \bm{G}^T \bm{x}_0$, is
\begin{equation} \label{label_112}
\psi(\bm{x}) = \langle \bm{\lambda}, \bm{H} + \bm{G}^T \bm{x} \rangle + \frac{1}{2 \sigma(A)} \| \bm{x} - \bm{x}_0 \|_2^2, \quad \bm{x} \in \mathbb{R}^n ,
\end{equation}
with $\psi^* = -d_{\bm{Q}, \bm{C}}(\bm{\lambda}, \sigma(A))$.

We proceed to constructing the middle method under the same assumptions as those in Subsection~\ref{sec:algo-nsc}. The estimate sequence gap $\Gamma(A) = \psi^*(A) - F(\bm{x}_+)$ now has the derivative given by
\begin{equation} \label{label_113}
\Gamma'(A) = -\frac{1}{2 (1 + \mu A)^2} \langle \bm{\lambda}(A), Q \bm{\lambda}(A) \rangle .
\end{equation}
We notice from \eqref{label_113} that the Newton iterations ensure an increase the convergence guarantees in the strongly convex case as well. We can now state the middle method in Algorithm~\ref{label_086-sc-method}.

\begin{algorithm}[h!]
\caption{The middle method for strongly convex objectives \newline{}
Newton-SC($\bm{Q}, \bm{C}, F(\bm{x}_+), \mu, \bm{\lambda}^{(0)}, A^{(0)} )$}
\label{label_086-sc-method}
\begin{algorithmic}[1]
\STATE $\bm{\lambda}_{\operatorname{valid}} := \bm{\lambda}^{(0)}$
\STATE $A_{\operatorname{valid}} := A := A^{(0)}$
\FOR{$t = 0, \ldots{}, N - 1$}
\STATE $\bm{\lambda} := \bm{\lambda}_{\bm{Q}, \bm{C}}\left(\frac{A}{1 + \mu A}\right)$ with starting point $\bm{\lambda}^{(0)}$
\STATE $\psi^* := \langle \bm{C}, \bm{\lambda} \rangle - \frac{A}{2 (1 + \mu A)} \langle \bm{\lambda}, \bm{Q} \bm{\lambda} \rangle$
\IF {$\psi^* < F(\bm{x}_+)$}
\STATE Break from loop
\ENDIF
\STATE $\bm{\lambda}_{\operatorname{valid}} := \bm{\lambda}$
\STATE $A_{\operatorname{valid}} := A$
\STATE $A := A + 2 (1 + \mu A)^2 \cdot{} \frac{ \psi^* - F(\bm{x}_+) }{\langle \bm{\lambda}, \bm{Q} \bm{\lambda} \rangle}$
\ENDFOR
\RETURN $\bm{\lambda}_{\operatorname{valid}}, A_{\operatorname{valid}}$
\end{algorithmic}
\end{algorithm}

With ACGM as our basis for algorithm design, we generate the new weights and test points as
\begin{gather}
(L_{k + 1} + \mu_{\Psi}) a^2_{k + 1} = \bar{A}_{k + 1} \bar{\gamma}_{k + 1}, \label{label_114}\\
\bm{y}_{k + 1} = \frac{1}{A_k \bar{\gamma}_{k + 1} + a_{k + 1} \gamma_k}(A_k \bar{\gamma}_{k + 1} \bm{x}_k + a_{k + 1} \gamma_k \bm{v}_k ) , \label{label_115}
\end{gather}
where
\begin{equation}
\bar{A}_{k + 1} \overset{\operatorname{def}}{=} A_k + a_{k + 1}, \quad \bar{\gamma}_{k + 1} \overset{\operatorname{def}}{=} 1 + \mu (A_k + a_{k + 1}), \quad
\gamma_k \overset{\operatorname{def}}{=} 1 + \mu A_k, \quad k \geq 0 .
\end{equation}
Next, by rearranging and canceling matching terms in \eqref{label_109}, we simplify the expression of the new model entries and obtain
\begin{align}
\bar{\bm{g}}_{k + 1} &= (L_{k + 1} - \mu_f) \bm{y}_{k + 1} - (L_{k + 1} + \mu_{\Psi}) \bm{x}_{k + 1} + \mu \bm{x}_0, \label{label_116} \\
\bar{h}_{k + 1} &= F(\bm{x}_{k + 1}) - \frac{L_{k + 1} - \mu_f}{2} \| \bm{y}_{k + 1} \|_2^2 + \frac{L_{k + 1} + \mu_{\Psi}}{2} \| \bm{x}_{k + 1} \|_2^2 - \frac{\mu}{2} \| \bm{x}_0 \|_2^2 . \label{label_117}
\end{align}
We can now write down the constraints on the model for all $k \geq 1$, using the updated definitions $h_k \overset{\operatorname{def}}{=} \langle \bm{\lambda}_k, \bm{H}_k \rangle$ and $\bm{g}_k \overset{\operatorname{def}}{=} \bm{G}_k \bm{\lambda}_k$, as
\begin{equation} \label{label_118}
\begin{aligned}
&(\bm{H}_{k + 1})_1 = h_k, & \quad & (\bm{G}_{k + 1})_1 = g_k, \\
&(\bm{H}_{k + 1})_2 = \bar{h}_F(\bm{y}_{k + 1}), & \quad & (\bm{G}_{k + 1})_2 = \bar{h}_F(\bm{y}_{k + 1}) .
\end{aligned}
\end{equation}

The starting point of the innermost scheme $\bm{\lambda}_{k + 1}^{(0)}$ is set according to \eqref{label_092}.

The model is updated and the middle method in Algorithm~\ref{label_086-sc-method} is called. Upon termination, the new estimate function optimum point in \eqref{label_112} is given by
\begin{equation}
\bm{v}_{k + 1} = \bm{x}_0 - \frac{A_{k + 1}}{1 + \mu A_{k + 1}} \bm{G}_{k + 1}^T \bm{\lambda}_{k + 1} .
\end{equation}

With all components in place, we state the Efficient Accelerated Gradient Method with Memory for strongly convex composite problems in Algorithm~\ref{label_119}.

\begin{algorithm}[h!]
\caption{An Efficient Accelerated Gradient Method with Memory for Strongly Convex Composite Problems}
\label{label_119}
\begin{algorithmic}[1]
\STATE \bd{Input:} $\bm{x}_0 \in \mathbb{R}^n$, $\mu_f \geq 0$, $\mu_{\Psi} \geq 0$, $L_0 > 0$, $r_u > 1 \geq r_d > 0$, $T > 0$
\STATE $\bm{v}_0 = \bm{x}_0$, $A_0 = 0$, $\mu = \mu_f + \mu_{\Psi}$
\FOR{$k = 0,\ldots{},T-1$}
\STATE $\gamma_k = 1 + \mu A_k$
\STATE $L_{k + 1} := r_d L_k$
\LOOP
\STATE $a_{k + 1} := \frac{\gamma_k + A_k \mu}{2 (L_{k + 1} - \mu_f)} \left(1 + \sqrt{1 + \frac{4 (L_{k + 1} - \mu_f) A_k \gamma_k}{(\gamma_k + A_k \mu)^2}} \right)$\\[1mm]
\STATE $\bar{\gamma}_{k + 1} := 1 + \mu (A_k + a_{k + 1})$\\[1mm]
\STATE $\bm{y}_{k + 1} := \frac{1}{A_k \bar{\gamma}_{k + 1} + a_{k + 1} \gamma_k}(A_k \bar{\gamma}_{k + 1} \bm{x}_k + a_{k + 1} \gamma_k \bm{v}_k)$\\[1mm]
\STATE $\bm{x}_{k + 1} := T_{L_{k + 1}}(\bm{y}_{k + 1})$ \\[1mm]

\IF {$f(\bm{x}_{k + 1}) \leq f(\bm{y}_{k + 1}) + \langle \grad{f}(\bm{y}_{k + 1}), \bm{x}_{k + 1} - \bm{y}_{k + 1} \rangle + \frac{L_{k + 1}}{2} \| \bm{x}_{k + 1} - \bm{y}_{k + 1} \|_2^2$}
\STATE Break from loop
\ELSE
\STATE $L_{k + 1} := r_u L_{k + 1}$
\ENDIF
\ENDLOOP
\STATE $\bar{\bm{g}}_{k + 1} = (L_{k + 1} - \mu_f) \bm{y}_{k + 1} - (L_{k + 1} + \mu_{\Psi}) \bm{x}_{k + 1} + \mu \bm{x}_0$\\[1mm]
\STATE $\bar{h}_{k + 1} = F(\bm{x}_{k + 1}) - \frac{L_{k + 1} - \mu_f}{2} \| \bm{y}_{k + 1} \|_2^2 +
\frac{L_{k + 1} + \mu_{\Psi}}{2} \| \bm{x}_{k + 1} \|_2^2 - \frac{\mu}{2} \| \bm{x}_0 \|_2^2$\\[1mm]
\IF {$k = 0$}
\STATE $\bm{H}_1 = \bar{h}_{1}$, $\bm{G}_1 = \bvec{g}_1$, $\bm{Q}_1 = \| \bvec{g}_1 \|_2^2$, $\bm{C}_1 = \bar{h}_{1} + \langle \bvec{g}_1, \bm{x}_0 \rangle$, $\bm{\lambda}_{1} = 1$, $A_1 = a_1$
\ELSE
\STATE Generate $\bm{H}_{k + 1}$ and $\bm{G}_{k + 1}$ to satisfy \eqref{label_091}
\STATE Generate $\bm{Q}_{k + 1}$ to equal $\bm{G}_{k + 1}^T \bm{G}_{k + 1}$
\STATE Generate $\bm{C}_{k + 1}$ to equal $\bm{H}_{k + 1} + \bm{G}_{k + 1}^T \bm{x}_0$
\STATE Generate $\bm{\lambda}^{(0)}_{k + 1}$ according to \eqref{label_092}
\STATE $\bm{\lambda}_{k + 1}, A_{k + 1} := \operatorname{Newton-SC}(\bm{Q}_{k + 1}, \bm{C}_{k + 1}, F( \bm{x}_{k + 1}), \mu, \bm{\lambda}^{(0)}_{k + 1}, A_k + a_{k + 1})$
\ENDIF
\STATE $h_{k + 1} = \langle \bm{H}_{k + 1}, \bm{\lambda}_{k + 1} \rangle$
\STATE $\bm{g}_{k + 1} = \bm{G}_{k + 1} \bm{\lambda}_{k + 1}$
\STATE $\bm{v}_{k + 1} = \bm{x}_0 - \frac{A_{k + 1}}{1 + \mu A_{k + 1}} \bm{g}_{k + 1}$
\ENDFOR
\end{algorithmic}
\end{algorithm}

\subsubsection{Convergence analysis} \label{label_120}

As we have seen in Subsubsection~\ref{label_098}, a worst-case convergence rate can be guaranteed if the estimate sequence property in \eqref{label_078} can be shown to hold for all $k \geq 1$.
Following the non-strongly convex case, the first iteration of Algorithm~\ref{label_119} is a descent step. We have $\bm{y}_1 = \bm{v}_0 = \bm{x}_0$, $\bm{x}_1 = T_{L_1}(\bm{x}_0)$. The model is determined by $\bm{H}_1 = \bar{h}_1$ and $\bm{G}_1 = \bvec{g}_1$ which directly gives $\bm{\lambda}_1 = 1$ without employing the inner method. Because $A_1 = a_1 = 1 / (L_1 - \mu_f)$ we have that $\psi_1^* = \bar{h}_1 + \langle \bvec{g}_1, \bm{x}_0 \rangle- \frac{A_1}{2 (1 + \mu A_1)} \| \bvec{g}_1 \|_2^2 = F(\bm{x}_1)$. Consequently, the middle method is not called and the value of $A_1$ stays $1 / (L_1 - \mu_f)$.

It remains to prove by induction that the estimate sequence property is preserved from one iteration to another \emph{before} the middle method is called thus ensuring the feasibility of the middle problem. Interestingly, this preservation does not require $f$ to be strongly convex which is why we first present the following result, which notably differs from Lemma~\ref{label_053}.

\begin{lemma} \label{label_121}
If $\Psi$ has a strong convexity parameter $\mu_{\Psi}$ and the condition in \eqref{label_019} holds, then $F$ is lower bounded for all $\bm{x}, \bm{y} \in \mathbb{R}^n$ as
\begin{equation} \label{label_122}
F(\bm{y}) \geq F(T_L(\bm{x})) + \frac{1}{2 (L + \mu_{\Psi})} \| g_{L}(\bm{x})\|_2^2 + \langle g_{L}(\bm{x}), \bm{y} - \bm{x} \rangle + \frac{\mu_{\Psi}}{2} \| \bm{y} - \bm{x} \|_2^2 .
\end{equation}
\end{lemma}
\begin{proof}
Herein we consider all $\bm{x}, \bm{y} \in \mathbb{R}^n$.
From the convexity of $f$ at $\bm{y}$ we have
\begin{equation} \label{label_123}
f(\bm{y}) \geq f(\bm{x}) + \langle f'(\bm{x}) , \bm{y} - \bm{x} \rangle .
\end{equation}
The first-order optimality conditions in \eqref{label_015} imply that there exists a subgradient $\bm{\xi}(\bm{x})$ of $\Psi$ at $T_L(\bm{x})$ such that
\begin{equation} \label{label_124}
L(\bm{x} - T_L(\bm{x})) = \bm{\xi}(\bm{x}) + f'(\bm{x}) .
\end{equation}
The definition of the strong convexity parameter in \eqref{label_004} applied for $\bm{\xi}(\bm{x})$ gives
\begin{equation} \label{label_125}
\Psi(\bm{y}) \geq \Psi(T_L(\bm{x})) + \langle \bm{\xi}(\bm{x}), \bm{y} - T_L(\bm{x}) \rangle + \frac{\mu_{\Psi}}{2} \| \bm{y} - T_L(\bm{x}) \|_2^2 .
\end{equation}
Adding together \eqref{label_019}, \eqref{label_124}, \eqref{label_125}, and canceling matching terms leads to
\begin{equation} \label{label_126}
F(\bm{x}) \geq F(T_L(\bm{x})) + \frac{\mu_{\Psi}}{2} \| \bm{y} - T_L(\bm{x}) \|_2^2 - \frac{L}{2} \| T_L(\bm{x}) - \bm{x} \|_2^2 | + L \langle \bm{x} - T_L(\bm{x}), \bm{y} - T_L(\bm{x}) \rangle .
\end{equation}
Rearranging terms in \eqref{label_126} yields \eqref{label_122}.
\end{proof}
Now we can proceed to the main result.
\begin{theorem} \label{label_127}
For a composite objective $F$ wherein we only know that $\Psi$ has a strong convexity parameter $\mu_{\Psi} \geq 0$, if at iteration $k \geq 1$ we have that $\psi_k^* \geq F(\bm{x}_k)$, then $\bar{\psi}_{k + 1}^* \geq F(\bm{x}_{k + 1})$, where $\bar{\psi}_{k + 1}$ is now given for all $\bm{x} \in \mathbb{R}^n$ and $k \geq 1$ by
\begin{equation} \label{label_128}
\bar{\psi}_{k + 1}(\bm{x}) \overset{\operatorname{def}}{=} \langle \bm{\lambda}_{k + 1}^{(0)}, \bm{H}_{k + 1} + \bm{G}_{k + 1}^T \bm{x} \rangle + \frac{\bar{\gamma}_{k + 1}}{2 \bar{A}_{k + 1}}\| \bm{x} - \bm{x}_0 \|_2^2.
\end{equation}
\end{theorem}
\begin{proof}
The results in this proof hold for all $\bm{x} \in \mathbb{R}^n$ and $k \geq 1$. The constraints on the model in \eqref{label_118} along with the middle scheme starting conditions in \eqref{label_092} enable us to write down the estimate function in \eqref{label_128} as
\begin{equation} \label{label_129}
\begin{aligned}
(A_k + a_{k + 1}) \bar{\psi}_{k + 1}(\bm{x})
& = A_k (h_k + \langle \bm{g}_k, \bm{x} \rangle ) + a_{k + 1} ( \bar{h}_{k + 1} + \langle \bvec{g}_{k + 1}, \bm{x} \rangle ) \\
& \quad + \frac{1 + \mu (A_k + a_{k + 1})}{2} \| \bm{x} - \bm{x}_0 \|_2^2 .
\end{aligned}
\end{equation}
The previous estimate function can be expressed in two ways. First, using \eqref{label_112} as
\begin{equation} \label{label_130}
\psi_k(\bm{x}) = h_k + \langle \bm{g}_k, \bm{x} \rangle + \frac{1}{2}\left(\mu + \frac{1}{A_k} \right) \| \bm{x} - \bm{x}_0 \|_2^2,
\end{equation}
and second using the canonical form as
\begin{equation} \label{label_131}
\psi_k(\bm{x}) = \psi_k^* + \frac{\gamma_k}{A_k} \| \bm{x} - \bm{v}_k \|_2^2 .
\end{equation}
The definition in \eqref{label_128} allows us to bring that estimate function also to a canonical form, given by
\begin{equation} \label{label_132}
\bar{\psi}_{k + 1}(\bm{x}) = \bar{\psi}_{k + 1}^* + \frac{\bar{\gamma}_{k + 1}}{\bar{A}_{k + 1}} \| \bm{x} - \bvec{v}_{k + 1} \|_2^2 .
\end{equation}
Moreover, new model entries in \eqref{label_116} and \eqref{label_117} by definition satisfy
\begin{equation} \label{label_133}
\begin{gathered}
\bar{h}_{k + 1} + \langle \bvec{g}_{k + 1}, \bm{x} \rangle + \frac{\mu}{2} \| \bm{x} - \bm{x}_0 \|_2^2 = F(\bm{x}_{k + 1}) + \frac{1}{2 \bar{L}_{k + 1}} \| \bm{c}_{k + 1} \|_2^2 \\+ \langle \bm{c}_{k + 1} , \bm{x} - \bm{y}_{k + 1} \rangle + \frac{\mu}{2} \| \bm{x} - \bm{y}_{k + 1} \|_2^2,
\end{gathered}
\end{equation}
where we define the extended Lipschitz constant estimate $\bar{L}_{k + 1} \overset{\operatorname{def}}{=} L_{k + 1} + \mu_{\Psi}$ and the composite gradient $\bm{c}_{k + 1} \overset{\operatorname{def}}{=} (L_{k + 1} + \mu_{\Psi})( \bm{y}_{k + 1} - \bm{x}_{k + 1} )$ to simplify notation.
Applying \eqref{label_133}, \eqref{label_132} and substituting \eqref{label_130} with \eqref{label_131} in \eqref{label_129} gives
\begin{equation} \label{label_134}
\begin{gathered}
\bar{A}_{k + 1} \bar{\psi}_{k + 1}^* + \frac{\bar{\gamma}_{k + 1}}{2} \| \bm{x} - \bvec{v}_{k + 1} \|_2^2 = A_k \psi_k^* + \frac{\gamma_k}{2} \| \bm{x} - \bm{v}_{k} \|_2^2 + a_{k + 1} F(\bm{x}_{k + 1}) \\ + a_{k + 1} \left( \frac{1}{2 \bar{L}_{k + 1}} \| \bm{c}_{k + 1} \|_2^2
+ \langle \bm{c}_{k + 1} , \bm{x} - \bm{y}_{k + 1} \rangle + \frac{\mu}{2} \| \bm{x} - \bm{y}_{k + 1} \|_2^2 \right) .
\end{gathered}
\end{equation}
Differentiating \eqref{label_134} with respect to $\bm{x}$ we obtain
\begin{equation} \label{label_135}
\bar{\gamma}_{k + 1} (\bm{x} - \bvec{v}_{k + 1}) = \gamma_k (\bm{x} - \bm{v}_{k}) + a_{k + 1} \bm{c}_{k + 1} + \mu (\bm{x} - \bm{y}_{k + 1}) .
\end{equation}
Taking \eqref{label_135} with $\bm{x} = \bm{y}_{k + 1}$, applying the Euclidean norm squared and dividing both sides by $2 \bar{\gamma}_{k + 1}$ yields
\begin{equation} \label{label_136}
\frac{\bar{\gamma}_{k + 1}}{2} \|\bm{x} - \bvec{v}_{k + 1}\|_2^2 = \frac{\gamma_k^2}{2 \bar{\gamma}_{k + 1}} \| \bm{x} - \bm{v}_{k} \|_2^2 + \frac{a_{k + 1}^2}{2 \bar{\gamma}_{k + 1}} \| \bm{c}_{k + 1} \|_2^2 + \frac{a_{k + 1} \gamma_k}{\bar{\gamma}_{k + 1}} \langle \bm{c}_{k + 1}, \bm{x} - \bm{v}_{k} \rangle .
\end{equation}
Further, we combine the assumption $\psi_k^* \geq F(\bm{x}_k)$ with Lemma~\ref{label_121} applied at $\bm{y} = \bm{x}_k$ and $\bm{x} = \bm{y}_{k + 1}$ we get
\begin{equation} \label{label_137}
\psi_k^* \geq F(\bm{x}_k) \geq F(\bm{x}_{k + 1}) + \frac{1}{2 \bar{L}_{k + 1}} \| \bm{c}_{k + 1} \|_2^2 + \langle \bm{c}_{k + 1}, \bm{x}_k - \bm{y}_{k + 1} \rangle + \frac{\mu_{\Psi}}{2} \| \bm{x}_k - \bm{y}_{k + 1} \|_2^2.
\end{equation}
Taking \eqref{label_134} with $\bm{x} = \bm{y}_{k + 1}$, applying \eqref{label_136} and \eqref{label_137} and grouping terms together produces
\begin{equation} \label{label_138}
\begin{gathered}
\bar{A}_{k + 1} \bar{\psi}_{k + 1}^* \geq \bar{A}_{k + 1} F(\bm{x}_{k + 1}) + \left( \frac{\bar{A}_{k + 1}}{2 \bar{L}_{k + 1}} - \frac{a_{k + 1}^2}{2 \bar{\gamma}_{k + 1}} \right) \| \bm{c}_{k + 1} \|_2^2 \\
+ \left\langle \bm{c}_{k + 1}, A_k \bm{x}_k + \frac{a_{k + 1} \gamma_k}{\bar{\gamma}_{k + 1}} \bm{v}_k - \left( A_k + \frac{a_{k + 1} \gamma_k}{\bar{\gamma}_{k + 1}}\right) \bm{y}_{k + 1} \right\rangle \\
+ \left(\frac{\gamma_k}{2} - \frac{\gamma_k^2}{2 \bar{\gamma}_{k + 1}} \right) \| \bm{y}_{k + 1} - \bm{v}_k \|_2^2 + \frac{A_k \mu_{\Psi}}{2} \| \bm{x}_k - \bm{y}_{k + 1} \|_2^2 .
\end{gathered}
\end{equation}
The updates in \eqref{label_114} and \eqref{label_115}, respectively, imply that the second and third terms in the right-hand side of \eqref{label_138} are null whereas $\bar{\gamma}_{k + 1} > \gamma_k$ implies the fourth term is non-negative and the last term is obviously non-negative. The desired result follows directly.
\end{proof}

Based on Theorem~\ref{label_127}, using the same reasoning as in Subsubsection~\ref{label_098}, we arrive at \eqref{label_106}. Considering that the growth rate of the convergence guarantees is never lower than that of ACGM when the Lipschitz constant estimates are the same, we have that the Theorem~2 in \cite{ref_009} holds, which gives the following worst-case rate:
\begin{equation}
F(\bm{x}_k) - F(\bm{x}^*) \leq \min\left\{\frac{4}{(k + 1)^2}, (1 - \sqrt{q_u})^{k - 1}\right\}
\frac{L_u - \mu_f}{2} \| \bm{x}_0 - \bm{x}^* \|_2^2 \quad k \geq 1.
\end{equation}

\subsection{Quadratic functional growth}

Good linear convergence rates can also be obtained for more relaxed notions of strong convexity such as the quadratic functional growth (QFG) condition. However, the quadratic lower bounds defining this class of problems, namely those in \eqref{label_003}, do not incorporate local gradient-like information. For this reason, we cannot adopt the approach used in the previous subsection. Instead, we improve overall performance by employing \emph{restart strategies}.

The idea of restarting a first-order method has been introduced in the original formulation of the Fast Gradient Method in \cite{ref_012} to deal with strong convexity. Interestingly, this idea predates the use of lower bounds that take into account strong convexity, either in composite form \cite{ref_014} or as smooth quadratic functions \cite{ref_013}, also found in Algorithm~\ref{label_119}. The notion of quadratic functional growth in \cite{ref_011} was proposed from the beginning alongside a restarting procedure, albeit only applicable when $\Psi$ is an indicator function of the feasible set. This strategy relies either on the a priori knowledge of both $L_f$ and $\mu$ or, alternatively, of the value $F^*$. Many composite problems do not fall into either category. A very recent breakthrough work in \cite{ref_001} has managed to formulate a restart strategy that is able to estimate an unknown growth parameter. This adaptive restart endows any first-order method with a near-optimal rate on composite objectives possessing the QFG property. Due to the importance of this work, we adopt the notation used therein when formulating our restarting procedure. However, we note that even the strategy proposed in \cite{ref_001} assumes that $L_f$ is known (with no strong convexity assumption on $\Psi$) and places constraints on the minimum number of iterations performed by the first-order scheme thereby ignoring to a certain extent the progress made beyond the worst-case guarantees, which is central to our methods.

In this work, we consider both the case when $\mu$ is known and when $\mu$ is not known. Under both assumptions, we formulate a restart strategy that is amenable to any optimization scheme possessing sublinear worst-case convergence guarantees. Afterwards, we analyze the implementation and performance of these restart strategies when taking into account the particularities of our AGMM, in this case the strong convexity agnostic version in Algorithm~\ref{label_093}.

First we need to introduce some notation. A restart scheme can be considered a wrapper around an existing optimization method $\mathcal{R}$. We require $\mathcal{R}$ to maintain at every iteration $k$ a convergence guarantee $A_k$ satisfying
\begin{equation} \label{label_139}
F(\bm{x}_k) - F^* \leq \frac{1}{2 A_k} d^2(\bm{x}_0).
\end{equation}
The worst-case rate sequence $\{A_k\}_{k \geq 0}$ has to increase at every iteration ($A_{k + 1} > A_k$ for all $k \geq 0$), has to start at $A_0 = 0$ and has to become arbitrarily large given enough iterations $\left(\displaystyle \lim_{k \rightarrow \infty}A_k = \infty\right)$. Throughout our analysis, it is desirable to maintain a one-to-one correspondence between iterations and convergence guarantees. To this end we extend the sequence $\{A_k\}_{k \geq 0}$ to take real-valued arguments instead of discrete indexes $k$, thereby ensuring that the resulting real-valued function is smooth and invertible. Let $A : \mathbb{R}_+ \rightarrow \mathbb{R}_+ $, $A(0) = 0$ and $A(k) = A_k$ for every integer $k \geq 1$. The value at all other points can be obtained either by extending to all positive real numbers the analytical expression of $A_k$ in integer $k$, if available, either by interpolation or through a combination of the two approaches. The resulting function $A$ must be a strictly increasing and must satisfy $\displaystyle \lim_{x \rightarrow \infty}A(x) = \infty$. These properties render $A$ invertible and we denote its inverse by $A^{-1} : \mathbb{R}_+ \rightarrow \mathbb{R}_+ $.

Every instance of the restarted scheme $\mathcal{R}$ needs to make meaningful progress beyond the starting point $\bm{x}_0$, at least as good as one iteration of the Gradient Method, namely $F(\bm{x}_k) \leq F\left(T_{L(\bm{x_0})}(\bm{x}_0)\right)$, $k > 0$, given an estimate $L(x_0)$ satisfying \eqref{label_019} with $\bm{x} = \bm{x}_0$.

The wrapper scheme successively calls $\mathcal{R}$, feeding the output iterate of one $\mathcal{R}$ instance as the input to the next one. An iteration of the wrapper scheme thus needs the include the following update:
\begin{equation} \label{label_140}
(\bm{r}_{j + 1}, U_{j + 1}, n_{j + 1}) = \mathcal{R}(\bm{r}_j, \bar{U}_j, E_j),
\end{equation}
where $j$ is the wrapper iteration counter, taking the values of $0$ through $J - 1$, with $J > 0$. Note that the wrapper scheme can be easily made online by setting $J = +\infty$. At every wrapper iteration $j$, the method $\mathcal{R}$ takes as parameters the starting point $\bm{r}_j$ (which becomes $\bm{x}_0$ within $\mathcal{R}$), a threshold value of the convergence guarantee upon termination $\bar{U}_j$ and an optional additional termination criterion $E_j$. The condition $E_j$ is sufficient for the termination of $\mathcal{R}$, and except for the first wrapper iteration, of the wrapper scheme as well. For instance, $E_j$ can mark the depletion of a predetermined computation budget, as a certain \emph{global total} number of $\mathcal{R}$ iterations. The output of $\mathcal{R}$ must contain the last iterate, the final convergence guarantee and the number of expended $\mathcal{R}$ iterations, known to the wrapper scheme as $\bm{r}_{j + 1}$, $U_{j + 1}$ and $n_{j + 1}$, respectively. A template for the restarted scheme can be found in Algorithm~\ref{label_141}. The algorithmic state of $\mathcal{R}$ at iteration $k$, denoted by $\operatorname{State}_k$, can include the estimate function, auxiliary points, results of oracle calls (possibly including higher of order ones) and any other parameters $\mathcal{R}$ may maintain at runtime. The implementation of $\operatorname{\mathcal{R}\_initialization}$ and $\operatorname{\mathcal{R}\_iteration}$ define the method $\mathcal{R}$.

\begin{algorithm}[h!]
\caption{A template for the restarted scheme call \newline{} $(\bm{r}_{j + 1}, U_{j + 1}, n_{j + 1}) = \mathcal{R}(\bm{r}_j, \bar{U}_j, E_j)$}
\label{label_141}
\begin{algorithmic}[1]
\STATE $\bm{x}_0 = \bm{r}_j$
\STATE $A_0 = 0$
\STATE $k := 0$
\STATE $\operatorname{State}_0 = \operatorname{\mathcal{R}\_initialization}(\bm{x}_0, A_0)$
\REPEAT
\STATE $(x_{k + 1}, A_{k + 1}, \operatorname{State}_{k + 1}) = \operatorname{\mathcal{R}\_iteration}(x_k, A_k, \operatorname{State}_k)$
\STATE $k := k + 1$
\UNTIL $A_{k} \geq \bar{U}_j$ or $E_j$ holds
\STATE $\bm{r}_{j + 1} = \arg\max\left\{F(\bm{x}_{k}), F\left(T_{L(\bm{x}_0)}(\bm{x}_0)\right)\right\}$
\STATE $U_{j + 1} = A_{k}$
\STATE $n_{j + 1} = k$
\end{algorithmic}
\end{algorithm}

Given the quadratic functional growth assumption, the outputs of Algorithm~\ref{label_141} satisfy
\begin{equation} \label{label_142}
\begin{gathered}
F(\bm{r}_{j + 1}) - F^* \leq \frac{1}{2 U_{j + 1}} d^2(\bm{r}_j) \leq \frac{1}{\mu U_{j + 1}} (F(\bm{r}_j) - F^*) \\
\overset{\mbox{if}\ E_j = \mbox{False}}\leq \frac{1}{\mu \bar{U}_{j}}(F(\bm{r}_j) - F^*), \quad j \geq 0.
\end{gathered}
\end{equation}
Iterating \eqref{label_142} we obtain that
\begin{equation} \label{label_143}
F(\bm{r}_j) - F^* \leq \frac{1}{C_j} (F(\bm{r}_0) - F^*), \quad j \geq 1,
\end{equation}
where the wrapper scheme convergence guarantee $C_j$ is given by $C_j = \mu^j \prod_{i = 1}^{j} U_{i}$. The accuracy of our estimate $\bm{r}_j$ in terms of objective value can be upper bounded by a \emph{known} quantity (see \cite{ref_001}). Specifically, rearranging terms in \eqref{label_143} yields
\begin{equation} \label{label_144}
F(\bm{r}_j) - F^* \leq \frac{1}{C_j - 1} (F(\bm{r}_0) - F(\bm{r}_j)), \quad j \geq 1 .
\end{equation}
Note that our assumptions do not ensure that the starting point $\bvec{r}_0$ of the wrapper scheme is feasible. If we set $\bm{r}_0 = \bm{x}_0$, then \eqref{label_143} and \eqref{label_144} become meaningless if $F(\bm{x}_0)$ is unbounded. For simplicity, throughout this section we choose $\bm{r}_0 = \operatorname{prox}_{\tau_0}(\bm{x}_0)$, with a suitably chosen $\tau_0$ such as $1 / L_0$ for AGMM. The complexity of this additional step is by assumption negligible on the scale of the wrapper scheme.

While starting point feasibility is necessary for every instance of $\mathcal{R}$, successive instances need not be linked by a single point. A restart must ensure that \eqref{label_139} is maintained, which entails the reset of the estimate function, if employed. However, for AGMM in Algorithm~\ref{label_093} the bundle itself need not be emptied at restart. Among the many possibilities of adjusting the model during restarts we consider in this work the two extremes: a \emph{soft restart}, where the bundle is maintained in its entirety and a \emph{hard restart} where the bundle is completely emptied. For AGMM, resetting the estimate function at the end of the wrapper iteration $j$ amounts to setting $\bm{v}_0 = \bm{r}_{j + 1}$, with $A_0 = 0$. Hard restart entails a full reset of the estimate function, and consequently the assignment $\psi_0^* = F(\bm{v}_0)$, whereas a soft restart does not alter the estimate function optimal value.

Going back to the general form of $\mathcal{R}$, we first consider that the guarantee $A_k$ for $\mathcal{R}$ is well approximated by a term $c \cdot k^p$ with constants $c > 0$ and $p > 0$. In this case we have that $C_j = (\mu c)^j \left( \prod_{i = 1}^{j} n_{i} \right)^p$, $j \geq 1$. For a given global total number of $\mathcal{R}$ iterations $N_j = \sum_{i = 1}^{j} n_{i}$, the inequality of arithmetic and geometric means implies that the largest values of $C_j$ are obtained when all $n_i$, $i \inrange{j}$, border $\bar{n} = N_j / j$. Generalizing to the case when the scale of $A_k$ only needs to be sublinear and independent of the problem dimensionality $n$, we obtain the same result and thus conclude that it is near optimal to obtain after every call to $\mathcal{R}$ an improvement by a constant $D \in (0, 1)$, stated as
\begin{equation} \label{label_145}
F(\bm{r}_{j + 1}) - F^* \leq D (F(\bm{r}_j) - F^*) , \quad j \geq 0 .
\end{equation}

\subsubsection{Known growth parameter}

When the exact value of $\mu$ is available to the algorithm, we can easily determine $D$ and $\bar{n}$ to maximize the overall worst-case rate $C_j$. We thus need to maximize the expression $\left( \mu A\left(\frac{N}{j}\right) \right)^j$ for any given $N$. By changing the variable to $\bar{n} = N / j$, the task reduces to determining the value $\bar{n}^*$ that maximizes $\left( \mu A(\bar{n}) \right)^{\frac{1}{\bar{n}}}$ with the wrapper scheme parameters $\bar{U}_j$ and $D$ directly given by $\bar{U}_j = A(\bar{n}^*)$ for all $j \geq 0$ and $D = \frac{1}{\mu A(\bar{n}^*)}$.

When $A(x)$ is well approximated by $c x^p$ we can obtain simple closed form expressions for $\bar{n}$, $D$ and $j$, respectively, given by
\begin{equation} \label{label_146}
\bar{n}^* = e \cdot (\mu c)^{-\frac{1}{p}}, \quad D = \frac{1}{\mu c (\bar{n}^*)^p} = e^{-p} , \quad j = \frac{N}{\bar{n}^*} =\frac{1}{e}(\mu c)^{\frac{1}{p}} N.
\end{equation}
Under this setup we have that $C_j = C^{-j} = e^{\frac{p}{e} (\mu c)^\frac{1}{p} N}$ for every $N = j \bar{n}^*$, $j \geq 1$. While the results in \eqref{label_146} accurately describe the asymptotic behavior, we need $\bar{n}$ and $j$ to be integers. In practical implementations we need to restart after a worst-case $\tilde{n} = \lceil \bar{n}^* \rceil$ iterations of $\mathcal{R}$, where $\lceil x \rceil$ is the smallest integer no less than $x$. The estimate $\bm{x}_N$ obtained after $N$ iterations of $\mathcal{R}$ satisfies for all $N$ that are \emph{positive multiples} of $\tilde{n}$ the following:
\begin{equation} \label{label_147}
\begin{gathered}
F(\bm{x}_N) - F^* \leq e^{ -\frac{p N}{\tilde{n}} } (F(\bm{r}_0) - F^*) \leq \exp{ -\frac{p}{e} (\mu c)^\frac{1}{p} N } (F(\bm{r}_0) - F^*), \\
F(\bm{x}_N) - F^* \leq \frac{1}{\exp{\frac{p}{e} (\mu c)^\frac{1}{p} N } - 1} (F(\bm{r}_0) - F(\bm{x}_N)) .
\end{gathered}
\end{equation}
Note that the worst-case rate in \eqref{label_147} \emph{does not} generally hold for all values of $N$ not multiples of $\tilde{n}$ because we cannot guarantee that $A_k = A(k) \geq A(\tilde{n})^{\frac{k}{\tilde{n}}}$ with $k \in \{ 1, ..., \tilde{n} - 1 \}$ for all possible parameter choices. However, in most common applications, \eqref{label_147} can be shown to hold for every positive integer $N$.

Our AGMM also fits this case and by setting $\mathcal{R}$ to be Algorithm~\ref{label_093} we can formulate a restart strategy according to \eqref{label_146} using $c = \frac{1}{ 4L_u}$ and $p = 2$. AGMM employs the bundle to attain a rate that is in practice much better then the worst-case rate and instead we only use $D = e^{-2}$ and set the threshold convergence guarantees $\bar{U}_j$ at $\bar{U}_j = 1 / (\mu D) = e^2 / \mu$ for all $j \geq 0$. The worst-case results in \eqref{label_147} also hold, for both soft and hard restart strategies.

\subsubsection{Unknown growth parameter} \label{label_148}

We can see in \eqref{label_146} that when the worst-case guarantees are dominated by a monomial term,
the constant $D$ \emph{does not depend on the growth parameter $\mu$}. This suggests that an efficient restart scheme can be implemented without knowledge of $\mu$. However, the condition in \eqref{label_145} cannot verified at runtime because the value of $F^*$ is usually not known. The estimation procedure introduced in \cite{ref_001} brings \eqref{label_145} to a form where all objective function values are known. As we have seen in \eqref{label_144}, \eqref{label_145} can be refactored to become
\begin{equation} \label{label_149}
F(\bm{r}_{j + 1}) - F(\bm{x}) \leq F(\bm{r}_{j + 1}) - F^* \leq \frac{D}{1 - D} (F(\bm{r}_j) - F(\bm{r}_{j + 1})) , \ j \geq 0, \ \bm{x} \in \mathbb{R}^n .
\end{equation}
For the progress made in \eqref{label_149} to be meaningful, we need to have $0 < D < \frac{1}{2}$. Note that \cite{ref_001} enforces $D =1 / (e + 1)$ whilst in this work we determine the value of $D$ that maximizes the asymptotic worst-case rate or alternatively, as we shall elaborate in the sequel, maximizes the iteration efficiency.

\paragraph{A simple adaptive restart scheme}
First, we formulate a restart strategy that can utilize any algorithm $\mathcal{R}$ adhering to the template in Algorithm~\ref{label_141}. The first call to $R$ needs to differ from the subsequent ones because we do not have an adequate threshold value $\bar{U}_0$ at hand.
An estimate can be obtained by choosing a suitable criterion $E_0$. A good choice was proposed in \cite{ref_001} also based on \eqref{label_149}. We generalize it to our setup as
\begin{equation} \label{label_150}
F(\bm{x}_m) - F(\bm{x}_k) \leq \frac{D}{1 - D} ( F(\bm{x}_0) - F(\bm{x}_m) ),
\end{equation}
where $m = \left\lceil A^{-1}\left( \frac{A(k)}{s} \right) \right\rceil$. For \eqref{label_150} to be a valid stopping criterion, we must first prove that it terminates within a reasonable amount of $\mathcal{R}$ iterations.

We define the maximal convergence guarantee as $U_{\operatorname{max}} \overset{\operatorname{def}}{=} \frac{1}{\mu D}$. When the iteration counter $k$ of the $\mathcal{R}$ instance called at $\bm{r}_j$ reaches a level where $A_k = A(k) \geq U_{\operatorname{max}}$, using the same reasoning as in the derivation of \eqref{label_149}, we obtain that within $\mathcal{R}$ for all $\bm{x} \in \mathbb{R}^n$ the following holds:
\begin{equation} \label{label_151}
\begin{gathered}
F(\bm{x}_k) - F(\bm{x}) \leq \frac{1}{\mu A(k) - 1} ( F(\bm{x}_0) - F(\bm{x}_k) ) \\ \leq \frac{1}{\mu U_{\operatorname{max}} - 1} ( F(\bm{x}_0) - F(\bm{x}_k) ) = \frac{D}{1 - D} ( F(\bm{x}_0) - F(\bm{x}_k) ) .
\end{gathered}
\end{equation} Based on this observation, we can formulate a limit on $n_1$ as follows.

\begin{lemma} \label{label_152}
The instance $\mathcal{R}(r_0, +\infty, E_0)$ terminates within $\bar{n}_1 \overset{\operatorname{def}}{=} $
$\left\lceil A^{-1}\left( \frac{s}{\mu D} \right) \right\rceil$ iterations.
\end{lemma}
\begin{proof}
If the iteration counter $k$ in $\mathcal{R}(r_0, +\infty, E_0)$ reaches $\bar{n}_1$, the condition in \eqref{label_150} is evaluated at $k = \bar{n}_1$ and $m = \bar{m}_1$, where $\bar{m}_1 \overset{\operatorname{def}}{=} \left\lceil A^{-1}\left( \frac{A(\bar{n}_1)}{s} \right) \right\rceil$. Because $A$ is monotonically increasing, we have that
\begin{equation}
\begin{gathered}
A(\bar{m}_1) = A\left( \left\lceil A^{-1}\left( \frac{A(\bar{n}_1)}{s} \right) \right\rceil \right) \geq A\left( A^{-1}\left( \frac{A(\bar{n}_1)}{s} \right) \right) = \frac{A(\bar{n}_1)}{s} \\
= \frac{A\left(\left\lceil A^{-1}\left( \frac{s}{\mu D} \right) \right\rceil\right)}{s} \geq \frac{A\left(A^{-1}\left( \frac{s}{\mu D} \right)\right)}{s} = \frac{\frac{s}{\mu D}}{s} = \frac{1}{\mu D} = U_{\operatorname{max}}.
\end{gathered}
\end{equation}
Consequently, taking into account \eqref{label_151}, we have that $E_0$ in \eqref{label_150} is satisfied and $\mathcal{R}(r_0, +\infty, E_0)$ terminates.
\end{proof}
Having proven the validity of the starting conditions, we are now ready to formulate the restart strategy.

At every wrapper iteration $j \geq 1$ \emph{when $E_j$ is not triggered}, we distinguish two cases. If the geometric decrease condition
\begin{equation} \label{label_153}
F(\bm{r}_{j}) - F(\bm{r}_{j + 1}) \leq \frac{D}{1 - D} ( F(\bm{r}_{j - 1}) - F(\bm{r}_{j}) ),
\end{equation}
holds, then we proceed to the next instance of $\mathcal{R}$ without need to adjust any parameters to attain a linear rate. If the condition in \eqref{label_153} is not satisfied, then an adjustment of the wrapper scheme occurs, which involves the increase of the threshold value $\bar{U}_j$. These increases eventually stop when $\bar{U}_j \geq U_{\operatorname{max}}$, whereby \eqref{label_151} implies \eqref{label_153}. We thus have $\bar{U}_j < s U_{\operatorname{max}}$ and at every wrapper step $j$, we automatically obtain an estimate of $\mu$ in the form
\begin{equation}
\hat{\mu}_j \overset{\operatorname{def}}{=} \frac{1}{D \bar{U}_j} > \frac{1}{D (sU_{\operatorname{max}})} = \frac{\mu}{s}, \quad j \geq 0.
\end{equation}
The condition $E_j$ ends the restarts if triggered for any $j\geq 1$. The entire wrapper scheme is listed in Algorithm~\ref{label_154}.

\begin{algorithm}[h!]
\caption{A restart strategy for the sublinear scheme $\mathcal{R}$ that attains a near-optimal linear convergence rate}
\label{label_154}
\begin{algorithmic}[1]
\STATE \bd{Input:} $\bvec{r}_0 \in \mathbb{R}^n$, $0 < D < \frac{1}{2}$, $s > 1$, $J \inrange{+\infty}$
\STATE $\bm{r}_0 = \operatorname{prox}_{\tau_0}(\bvec{r}_0)$
\STATE $(\bm{r}_1, U_1, n_1) = \mathcal{R}(\bm{r}_0, +\infty, E_0)$ \hspace{1em} \#Reference step \label{label_154_1}
\STATE $\bar{U}_1 := U_1$
\FOR{$j = 1, \ldots{}, J - 1$}
\STATE $(\bm{r}_{j + 1}, U_{j + 1}, n_{j + 1}) = \mathcal{R}(\bm{r}_j, \bar{U}_j, E_j)$
\IF {$E_j$ holds}
\STATE \bd{break}
\ENDIF
\IF {$F(\bm{r}_{j}) - F(\bm{r}_{j + 1}) > \frac{D}{1 - D} (F(\bm{r}_{j - 1}) - F(\bm{r}_{j}))$}
\STATE $\bar{U}_{j + 1} := s \cdot \bar{U}_j$ \hspace{1em} \# Adjustment step
\ELSE
\STATE $\bar{U}_{j + 1} := \bar{U}_j$ \hspace{1em} \# Normal step
\ENDIF
\ENDFOR
\end{algorithmic}
\end{algorithm}
\paragraph{Complexity analysis}
To analyze the worst-case complexity of Algorithm~\ref{label_154}, we need to introduce a fundamental property of composite objectives endowed with the quadratic functional growth property in \eqref{label_003}. It is similar to but not the same as the Polyak-\L{}ojasiewicz inequality, which does not hold in this context.
\begin{lemma} \label{label_156}
At every feasible point $\bm{x}$, the squared norm of \emph{any} subgradient taken with coefficient $2/\mu$ constitutes an upper bound on the optimality gap, namely
\begin{equation}
F(\bm{x}) - F^* \leq \frac{2}{\mu} \| \bm{\xi} \|_2^2, \quad \bm{x} \in X, \quad \bm{\xi} \in \delta F(\bm{x}).
\end{equation}
\end{lemma}
\begin{proof}
When $\bm{x} \in X^*$ we have for all $\bm{\xi} \in \delta F(\bm{x})$ that $F(\bm{x}) - F^* = 0 \leq \frac{2}{\mu} \| \bm{\xi} \|_2^2$.
When $\bm{x} \in X \setminus X^*$, from the convexity of $F$ at $\bm{x}$ we have
\begin{equation}
F^* - F(\bm{x)} \geq \langle \bm{\xi}, \bm{o}(\bm{x}) - \bm{x} \rangle .
\end{equation}
Applying the Cauchy–Bunyakovsky–Schwarz inequality we obtain
\begin{equation} \label{label_157}
\| \bm{\xi} \|_2 d(\bm{x}) = \| \bm{\xi} \|_2 \| \bm{x} - \bm{o}(\bm{x}) \|_2 \geq \langle \bm{\xi}, \bm{x} - \bm{o}(\bm{x}))\rangle \geq F(\bm{x}) - F^*.
\end{equation}
Plugging in the QFG definition \eqref{label_003} in \eqref{label_157} yields
\begin{equation}
\frac{2}{\mu} \| \bm{\xi} \|_2^2 \geq \frac{\left( F(\bm{x}) - F^* \right)^2}{\frac{\mu}{2}d^2(\bm{x})} \geq \frac{\left( F(\bm{x}) - F^* \right)^2}{F(\bm{x}) - F^*} = F(\bm{x}) - F^*.
\end{equation}
\end{proof}
The above property enables us to transform localized progress into a global optimality certificate. Note that the following result applies even to one iteration of the Gradient Method and it is thus applicable to instances of $\mathcal{R}$ that were terminated while the convergence guarantee was below the threshold.
\begin{proposition} \label{label_158}
After every wrapper iteration \eqref{label_140} employing a restarted scheme that adheres to the template in Algorithm~\ref{label_141}, we have
\begin{equation}
F(\bm{r}_{j + 1}) - F^* \leq \frac{4 L(\bm{r}_j)}{\mu}\left(1 + \frac{L_f}{L(\bm{r}_j)}\right)^2 \left( F(\bm{r}_j) - F(\bm{r}_{j + 1})\right), \quad j \geq 0,
\end{equation}
even when the stopping criterion $E_j$ triggers an early termination of $\mathcal{R}$.
\end{proposition}
\begin{proof}
Let $s_L(\bm{x}) \overset{\operatorname{def}}{=} g_L(\bm{x}) - f'(\bm{x}) + f'(T_L(\bm{x}))$, $\bm{x} \in \mathbb{R}^n$, $L > 0$.
It is a well-known result~\cite{ref_014} that for all $\bm{x} \in \mathbb{R}^n$, $L > 0$ we have
\begin{equation}
\begin{gathered} \label{label_159}
\| s_L(\bm{x}) \|_2 = \| g_L(\bm{x}) - f'(\bm{x}) + f'(T_L(\bm{x})) \|_2 \leq \| g_L(\bm{x}) \|_2 + \| f'(\bm{x}) - f'(T_L(\bm{x})) \|_2 \\ \leq L \| \bm{x} - T_L(\bm{x}) \|_2 + L_f \| \bm{x} - T_L(\bm{x}) \|_2 = \left( 1 + \frac{L_f}{L} \right) \| g_L(\bm{x}) \|_2.
\end{gathered}
\end{equation}
From \eqref{label_017} we have that $s_L(\bm{x}) \in \delta F(T_L(\bm{x}))$ and we can apply Lemma~\ref{label_156} at $T_{L(\bm{r}_j)}(\bm{r}_j)$. Further using the properties of the scheme $\mathcal{R}$ and our previous result in \eqref{label_159} we obtain
\begin{equation} \label{label_160}
\begin{gathered}
F(\bm{r}_{j + 1}) - F^* \leq F\left(T_{L(\bm{r}_j)}(\bm{r}_j)\right) - F^* \leq \frac{2}{\mu} \| s_{L(\bm{r}_j)}(\bm{r_j}) \|_2^2 \\
\overset{\eqref{label_159}}{\leq} \frac{2}{\mu} \left(1 + \frac{L_f}{L(\bm{r}_j)}\right)^2 \left\| g_{L(\bm{r}_j)} (\bm{r}_j)\right\|^2_2.
\end{gathered}
\end{equation}
The local upper bound property in \eqref{label_019} implies the descent rule~\cite{ref_014} which, together with our assumptions, yields
\begin{equation} \label{label_161}
\frac{1}{2 L(\bm{r}_j)} \| g_{L(\bm{r}_j)}(\bm{r}_j) \|_2^2 \leq F(r_j) - F\left(T_{L(\bm{r}_j)}(\bm{r}_j)\right) \leq F(\bm{r}_j) - F(\bm{r}_{j + 1}) .
\end{equation}
Finally, applying \eqref{label_161} to \eqref{label_160} gives the desired result.
\end{proof}
Now we have all the theoretical tools necessary for our analysis. To proceed, we need to distinguish between the 4 types of wrapper iterations. The first is the \emph{reference} iteration, it occurs exactly once and entails $N_r \leq \bar{n}_1$ iterations of $\mathcal{R}$. As the name implies, its output is used as a reference point in our final results. The heuristic ensures that the progress made in this wrapper iteration is substantial. It also tries to provide an accurate estimate of $\mu$ and thus, as we shall show in the sequel, lowers the bound on the total number of a second type of wrapper iterations, the \emph{adjustment} iterations.

Adjustments occur when the condition in \eqref{label_153} is not satisfied. The $\mathcal{R}$ iterations improve our estimate of the optimal point but cannot be counted towards the asymptotic linear rate. However, we easily observe that the number of such adjustments is bounded independently of the total number wrapper iterations $J$, even when it is infinite. As we have seen, $\bar{U}_j$ can only be increased until it exceeds $U_{\operatorname{max}}$, at which point \eqref{label_153} must hold. Consequently, the total number of adjustments satisfies
\begin{equation} \label{label_162}
b \leq b_{\operatorname{max}} \overset{\operatorname{def}}{=} \left\lceil log_s\left(\frac{U_{\operatorname{max}}}{U_1}\right)\right\rceil = \lceil -log_s(\mu D U_1)\rceil < -log_s(\mu D U_1) + 1 .
\end{equation}
We can thus count the progress made in the $i$th adjustment, indexed as $j_i$, as
\begin{equation} \label{label_163}
F(\bm{r}_{j_i}) - F(\bm{r}_{{j_i} + 1}) \leq \frac{1}{\mu U_1 s^{i - 1} -1} \left(F(\bm{r}_{{j_i} - 1}) - F(\bm{r}_{j_i})\right), \quad i \inrange{b} .
\end{equation}
Moreover, the total number of $\mathcal{R}$ iterations expended during adjustments, denoted by $N_b$, is also bounded by $N_b \leq \displaystyle \sum_{i = 0}^{b - 1} A^{-1}(s^i U_1)$ .
Note that unlike the results obtained in \cite{ref_001}, the adjustment overhead obtained does not depend on a target accuracy, meaning that using our approach one can derive a worst-case rate for an \emph{online} scheme as well.

The third type of wrapper iterations is the dominant one, which we designate as the normal iterations. The progress here is clearly measured as
\begin{equation} \label{label_164}
F(\bm{r}_j) - F(\bm{r}_{j + 1}) \leq \frac{D}{1 - D} \left(F(\bm{r}_{j - 1}) - F(\bm{r}_j)\right), \quad j \geq 1, \quad j \notin \{j_i \mid i \inrange{b}\} .
\end{equation}

Finally, if $E_j$ holds at a given wrapper iteration $j \geq 1$, then the entire wrapper scheme in Algorithm~\ref{label_154} terminates. It occurs at most once and if it does, it always succeeds all other wrapper iterations. In our analysis we can always assume that the termination step occurs by simply ignoring the progress made in the last wrapper iteration beyond one step of the Gradient Method, regardless of whether $E_j$ is triggered or not. Therefore, we consider the progress made here to be governed by Proposition~\ref{label_158} and denote the number of $\mathcal{R}$ iterations expended here by $N_t$.

To obtain $\bm{r}_{j + 1}$ we thus need to perform one reference step, $b \leq b_{\operatorname{max}}$ adjustment steps, $j - b - 1$ normal steps and one termination step. Putting together all the above assumptions as well as the results in \eqref{label_163}, \eqref{label_164} and Proposition~\ref{label_158}, we obtain
\begin{equation}
F(\bm{r}_{j + 1}) - F^* \leq C_b \left(\frac{D}{1 - D} \right)^{j} \left( F(\bm{r}_0) - F(\bm{r}_1) \right), \quad j \geq 0,
\end{equation}
where
\begin{equation} \label{label_165}
C_b \overset{\operatorname{def}}{=} \frac{4 L(\bm{r}_j)}{\mu} \left( 1 + \frac{L_f}{L(\bm{r}_j)} \right)^2 \left(\frac{1 - D}{D} \right)^{b + 1} \prod_{i = 1}^{b} \frac{1}{\mu U_1 s^{i - 1} - 1} .
\end{equation}

In the worst-case, the number of $\mathcal{R}$ iterations at every \emph{normal} wrapper iteration is $\tilde{n} = \lceil A^{-1}\left(\max\{s U_{\operatorname{max}}, U_1\}\right) \rceil$. This also holds for the termination iteration: $N_t \leq \tilde{n}$.
When $A(x)$ is well approximated by $c\cdot x^p$ and the number of $\mathcal{R}$ iterations is large enough to ignore the effects of integer rounding, we can simplify the expression of $\tilde{n}$ as $\tilde{n} = \left( \frac{s}{\mu D c} \right)^{\frac{1}{p}}$. Taking the total number of iterations to be in the worst case $N = N_r + N_b + (j - b - 1) \tilde{n} + N_t = N_o + j \tilde{n}$, with the overhead
\begin{equation} \label{label_166}
N_o \overset{\operatorname{def}}{=} N_r + N_b + N_t - (b + 1) \tilde{n},
\end{equation}
the problem of maximizing the worst-case \emph{asymptotic linear rate} becomes
\begin{equation} \label{label_167}
\begin{gathered}
\arg\min_{D \in \left(0, \frac{1}{2}\right)} \displaystyle \lim_{N \rightarrow \infty} \frac{\ln{\left(C_b \left(\frac{D}{1 - D} \right)^{j}\right)}}{N} = \arg\min_{D \in \left(0, \frac{1}{2}\right)} \displaystyle \lim_{N \rightarrow \infty} \frac{\frac{N - N_o}{\tilde{n}} \ln{\frac{D}{1 - D}} + \ln{C_b} }{N} \\
= \arg\min_{D \in \left(0, \frac{1}{2}\right)} \frac{1}{\tilde{n}}\ln{\frac{D}{1 - D}}
= \arg\min_{D \in \left(0, \frac{1}{2}\right)} D^\frac{1}{p} \ln\left(\frac{D}{1 - D}\right) = \arg\max_{D \in \left(0, \frac{1}{2}\right)} \eta(D, p),
\end{gathered}
\end{equation}
where we define the iteration efficiency $\eta(D, p)$ as
\begin{equation} \label{label_168}
\eta(D, p) \overset{\operatorname{def}}{=} -\frac{e}{p} D^\frac{1}{p} \ln\left(\frac{D}{1 - D}\right) , \quad D \in \left(0, \frac{1}{2} \right), \quad p \geq 1.
\end{equation}
According to our assumptions, we can use the notation $\bm{x}_N = \bm{r}_{j + 1}$. The iteration efficiency allows us to express the worst-case convergence rate in a simple form, namely
\begin{equation}
F(\bm{x}_N) - F^* \leq C_b \exp{-\eta(D, p) \frac{p}{e} \left(\frac{\mu c}{s}\right)^\frac{1}{p} (N - N_o)} \left( F(\bm{r}_0) - F(\bm{r}_1) \right),
\end{equation}
with $C_b$, $\eta(D, p)$ and $N_o$ respectively given by \eqref{label_165}, \eqref{label_168} and \eqref{label_166}. Thus, the ratio between the asymptotic rate for an unknown QFG parameter is $\eta(D, p) s^{-\frac{1}{p}}$ that of the known parameter case in \eqref{label_147}.

Our simulations suggest that at least on most problems the worst-case behavior entails that all backtracks occur during the first wrapper iterations, with the geometric decrease rate being attained only after the threshold exceeds $U_{max}$. This possibility leads to an optimal value of $D = e^{-p}$, the same is in the known growth parameter case (see \eqref{label_146} and \eqref{label_147}). It is worth investigating the relationship between the experimental worst-case rate and our analytical results above. To this end, we list in Table~\ref{label_169} for $p$ in the range $[1,5]$ the iteration efficiency when $D$ takes the values $\frac{1}{e + 1}$ as in \cite{ref_001}, $e^{-p}$ as seen in our experiments and $D^*$ as the numerical solution to \eqref{label_167}, respectively. The case when $p = 2$, corresponding to our AGMM, is marked in bold font.

\begin{table}[H]
\centering
\small
\caption{The iteration efficiency for $p$ in the range $[1,5]$ when considering various values of $D$}
\label{label_169}
\begin{tabular}{ccccccc} \toprule
p & $\frac{1}{e + 1}$ & $e^{-p}$ & $D^*$ & $\eta(\frac{1}{e + 1}, p)$ & $\eta(e^{-p}, p)$ & $\eta(D^*, p)$ \\ \midrule
1.0 & 0.2689414 & 0.3678794 & 0.2178117 & 0.7311 & 0.5413 & 0.7569 \\
1.5 & 0.2689414 & 0.2231302 & 0.1469828 & 0.7550 & 0.8317 & 0.8875 \\
\bd{2.0} & \bd{0.2689414} & \bd{0.1353353} & \bd{0.0981709} & \bd{0.7048} & \bd{0.9273} & \bd{0.9444} \\
2.5 & 0.2689414 & 0.0820850 & 0.0646055 & 0.6430 & 0.9657 & 0.9714 \\
3.0 & 0.2689414 & 0.0497871 & 0.0418457 & 0.5849 & 0.9830 & 0.9849 \\
3.5 & 0.2689414 & 0.0301974 & 0.0267004 & 0.5337 & 0.9912 & 0.9919 \\
4.0 & 0.2689414 & 0.0183156 & 0.0168168 & 0.4894 & 0.9954 & 0.9956 \\
5.0 & 0.2689414 & 0.0067379 & 0.0064795 & 0.4181 & 0.9986 & 0.9987 \\ \bottomrule
\end{tabular}
\end{table}

\subsection{Model overhead} \label{label_076_overhead}

With respect to per-iteration complexity, all accelerated algorithms introduced in this work can be assimilated into AGMM. One reason is that both Algorithms \ref{label_093} and \ref{label_119} have iterations that are identical in terms of computational load. Moreover, the restart strategy in Algorithm~\ref{label_154} applied to Algorithm~\ref{label_093} can be reformulated as Algorithm~\ref{label_093} where at the end of each iteration the restart condition in \eqref{label_153} is additionally evaluated. With adequate caching of past function values, which are scalars and thus not subject to the memory limitation of the bundle, there is no computational complexity difference between the initial run in line \ref{label_154_1} of Algorithm~\ref{label_154} and the subsequent ones.

Following the case of Algorithm~\ref{label_030}, the model overhead for AGMM, in all aforementioned forms, is given by the costs pertaining to the inner problem, followed by middle scheme which iterates a few values of the convergence guarantee $A_{k + 1}$, this time increasing while fit. The inner problem setup is also dominated by the update of $\bm{Q}_{k + 1}$, amounting $\mathcal{O}(p_{k+ 1} n)$ FLOPs. Unlike in GMM, the $\bm{C}_{k + 1}$ update is of negligible complexity, as it can be obtained from $\bm{C}_{k}$ with $\mathcal{O}(n)$ cost. Moreover, for AGMM the middle method does not perform any calls to the oracle functions and the complexity of each step in the middle and inner methods do not depend on the outer problem dimensionality $n$. Actually, we have noticed that a small number $N_N$ of Newton iterations captures almost all performance improvements. Consequently, we consider the middle method, and for that matter the entire model overhead, to have negligible complexity when compared to that of a single oracle call.

\section{Simulations} \label{label_171}

To test the effectiveness of our methods, we have chosen the same collection of composite problems we had used to benchmark the generalized form of the Accelerated Composite Gradient Method in \cite{ref_009}. The five problems come from the areas of statistics, inverse problems and machine learning. Specifically, these are: least absolute shrinkage and selection operator (LASSO)~\cite{ref_020}, non-negative least squares (NNLS), $l_1$-regularized logistic regression (L1LR), ridge regression (RR) and elastic net (EN). The last two are strongly convex (in a strict sense) with known strong convexity parameter. To maximize the performance of the competing methods and to support the modification made to Algorithm~\ref{label_030} in Subsubsection~\ref{label_051}, we transfer all known strong convexity to the regularizer.

The objective function structure for each of the five problems is listed in Table~\ref{label_172}. To simplify notation, we introduce the sum softplus function $\mathcal{I}(\bm{x})$, the element-wise logistic function $\mathcal{L}(\bm{x})$, and the shrinkage operator $\mathcal{T}_{\tau}(\bm{x})$, respectively given for all $\bm{x} \in \mathbb{R}^n$ and $\tau > 0$ by
\begin{gather}
\mathcal{I}(\bm{x}) = \displaystyle \sum_{i = 1}^{m} \log(1 + e^{\bm{x}_i}), \\
\mathcal{L}(\bm{x})_i = \frac{1}{1 + e^{-\bm{x}_i}}, \quad i \inrange{m}, \\
\mathcal{T}_{\tau}(\bm{x})_j = (|\bm{x}_j| - \tau)_{+} \sgn(\bm{x}_j), \quad j \inrange{n}.
\end{gather}

\begin{table*}[h!]
\centering
\small
\caption{Oracle functions of the five test problems}
\label{label_172}
\begin{tabular}{lllll} \toprule
& $f(\bm{x})$ & $\Psi(\bm{x})$ & $\grad{f}(\bm{x})$ & $\operatorname{prox}_{\tau \Psi}(\bm{x})$ \\ \midrule
LASSO & $\frac{1}{2}\|\bm{A} \bm{x} - \bm{b} \|_2^2$ & $\lambda_1 \|\bm{x}\|_1$ & $\bm{A}^T(\bm{A}\bm{x} - \bm{b})$ & $\mathcal{T}_{\tau \lambda_1} (\bm{x})$ \\
NNLS & $\frac{1}{2}\|\bm{A} \bm{x} - \bm{b} \|_2^2$ & $\sigma_{\mathbb{R}_{+}^n}(\bm{x})$ & $\bm{A}^T(\bm{A}\bm{x} - \bm{b})$ & $(\bm{x})_{+}$ \\
L1LR & $\mathcal{I}(\bm{A}\bm{x}) - \bm{y}^T \bm{A} \bm{x}$ & $\lambda_1 \|\bm{x}\|_1$ & $\bm{A}^T(\mathcal{L}(\bm{A}\bm{x}) - \bm{y})$ & $\mathcal{T}_{\tau \lambda_1} (\bm{x})$ \\
RR & $\frac{1}{2}\|\bm{A} \bm{x} - \bm{b} \|_2^2$ & $\frac{\lambda_2}{2} \|\bm{x}\|_2^2$ & $\bm{A}^T(\bm{A}\bm{x} - \bm{b})$ & $\frac{1}{1 + \tau \lambda_2} \bm{x}$ \\
EN & $\frac{1}{2}\|\bm{A} \bm{x} - \bm{b} \|_2^2$ & $\lambda_1 \|\bm{x}\|_1 + \frac{\lambda_2}{2} \|\bm{x}\|_2^2$ & $\bm{A}^T(\bm{A}\bm{x} - \bm{b})$ & $\frac{1}{1 + \tau \lambda_2} \mathcal{T}_{\tau \lambda_1} (\bm{x})$ \\ \bottomrule
\end{tabular}
\end{table*}

The instances of the LASSO, L1LR and RR problems are completely identical to the ones described in \cite{ref_009} but for NNLS and EN it was necessary to alter the parameters. In its original form, NNLS was solved up to machine precision by a well configured gradient method with memory in under $30$ iterations while for EN this was accomplished in under $60$. Such values do not allow the construction of large bundles and leave little room for restarts making it impossible to showcase the advantage of these models and techniques. The parameters for these two problems were altered to ensure that even the fastest method tested would require at least around $100$ iterations converge to a high level of accuracy. For the sake of completeness, we describe in the following the specifics of all five problems. All random values were drawn from independent and identically distributed variables, unless otherwise stated, with $\mathcal{N}(0, 1)$ denoting the standard normal distribution. See also \cite{ref_009} for more details on how the problem instances LASSO, L1LR and RR were generated.

For LASSO, we used a $m = 500$ by $n = 500$ matrix $\bm{A}$ with entries sampled from $\mathcal{N}(0, 1)$, a vector $\bm{b}$ with entries drawn from $\mathcal{N}(0, 9)$ and a regularization parameter $\lambda_1 = 4$. The starting point $\bm{x}_0 \in \mathbb{R}^n$ has all entries sampled from $\mathcal{N}(0, 1)$. In NNLS, $\bm{A}$ is an $m = 1000$ by $n = 1000$ sparse matrix with $1\%$ of entries non-zero. The locations of the non-zero entries were chosen uniformly at random and the entries themselves were drawn from $\mathcal{N}(0, 1)$. Both $\bm{b}$ and $\bm{x}_0$ are dense with entries also drawn from $\mathcal{N}(0, 1)$. For L1LR, $\bm{A}$ is $m = 200 \times n = 1000$ with entries sampled from the standard normal distribution and $\lambda_1 = 5$. The starting point $\bm{x}_0$ has exactly $10$ non-zero entries at locations selected uniformly at random, with each entry drawn from $\mathcal{N}(0, 225)$. The binary labels $\bm{y}_i$ were chosen according to $\mathbb{P}(\bm{Y}_i = 1) = \mathcal{L}(\bm{A} \bm {x})_i$. In RR, we have $m = 500 \times n = 500$. The entries of $\bm{A}$, $\bm{b}$ and $\bm{x}_0$ are sampled from $\mathcal{N}(0, 1)$, $\mathcal{N}(0, 25)$, and $\mathcal{N}(0, 1)$, respectively. We have $\lambda_2 = 10^{-3} (\sigma_{max}(\bm{A}))^2$, with $\sigma_{max}(\bm{A})$ denoting the largest singular value of $\bm{A}$. For EN, $\bm{A}$ has $m = 1000 \times n = 500$ entries sampled from $\mathcal{N}(0, 1)$, $\lambda_1 = 1.5 \sqrt{2 \log(n)}$ (as recommended by \cite{ref_010}) and $\lambda_2 = 10^{-3} (\sigma_{max}(\bm{A}))^2$.

The global Lipschitz constant for each problem with a quadratic smooth part is given by $L_f = (\sigma_{max}(\bm{A}))^2$ whereas for L1LR we have $L_f = \frac{1}{4}(\sigma_{max}(\bm{A}))^2$. The strong convexity parameter in RR and EN is $\mu = \mu_{\Psi} = \lambda_2$ which yields an inverse condition number $q \overset{\operatorname{def}}{=} \mu / (L_f + \mu_{\Psi}) = 1 / 1001$.

To determine an approximate optimal value for each problem along with one approximate optimal point, we have employed AMGS, a variant of Nesterov's Fast Gradient Method introduced in \cite{ref_014}. We use the term Accelerated Multistep Gradient Scheme (AMGS) to distinguish between the original formulation of the Fast Gradient Method and this variant. AMGS was chosen despite its expensive iterations (which require at least two proximal gradient steps each) for its exceptional numerical stability owing to its damped relaxation condition based line-search procedure~\cite{ref_014}. For all problems, we have obtained an optimal point estimate $\hat{\bm{x}}^*$ by running this algorithm for $10000$ iterations with $L_0 = L_f$, $\gamma_u = 2$ and $\gamma_d = 10 / 9$ (as suggested in \cite{ref_003}). We consider that convergence is attained for a certain $\bm{x}_k$ when the relative error $\epsilon_k \overset{\operatorname{def}}{=} \frac{F(\bm{x}_k) - F(\hat{\bm{x}}^*)}{F(\bm{x}_0) - F(\hat{\bm{x}}^*)}$ goes below $\epsilon = 10^{-9}$. In all problems $\bm{x}_0$ is feasible and there is no need to use the proximal map to project it onto the feasible set.

In a preliminary stage, we have tested only the methods introduced in this work, with extensions meant to fill some gaps in the parameter range. The first method in the benchmark is GMM, as listed in Algorithm~\ref{label_030}. We distinguish between GMM-C employing the cyclic replacement strategy (CRS), whereby the oldest bundle entries are removed to make way for the new~\cite{ref_018}, and GMM-M relying on the max-norm replacement strategy (MRS) that removes the entries with the largest composite gradient norm. On the non-strongly convex problems LASSO, NNLS and L1LR we test AGMM in the form given by Algorithm~\ref{label_093} whereas on the strongly-convex problems RR and EN we consider the generalized form in Algorithm~\ref{label_119}. Behavior varies according to the replacement strategy and we likewise consider AGMM-C with CRS and AGMM-M with MRS. Indeed, Algorithm~\ref{label_093} is a particular case of Algorithm~\ref{label_119} and there is no apparent need to explicitly mention the former form separately. However, Algorithm~\ref{label_093} can also take advantage of strong convexity and of the more general QFG by resorting to the restart strategy described in Algorithm~\ref{label_154}. In this case we further distinguish between soft and hard restart modes, and thus have ASRC (Restarting AGMM by having Algorithm~\ref{label_154} call Algorithm~\ref{label_093} with soft restart and CRS as the method $\mathcal{R}$), ASRM (soft restart, MRS) as well as AHRC (hard restart, CRS) and AHRM (hard restart, MRS). In our experiments we found that hard restarting produces very similar, albeit slightly worse, results than soft restarting. Due to space limitations, we do not include the results involving hard restart.

We do however implicitly test against state-of-the-art methods by using the following observations. When the bundle size is one, GMM becomes the Gradient Method (also known as Forward-Backward method in the context of composite problems). Neither variant of AGMM is defined for $m = 1$ but, for consistency, we adopt the convention that AGMM with $m = 1$ actually denotes ACGM~\cite{ref_008,ref_009}. Note that for $m = 2$, AGMM is a generalization and improvement over the Adaptive-Accelerated (AA) Method introduced in \cite{ref_019}, another state-of-the-art method.

A series of tables explains the effectiveness of the six above-mentioned methods as a function of bundle size. We consider bundle sizes given by a power of $2$ within the range $1$ to $128$. The performance measured in iterations until convergence is listed in Table~\ref{label_173} for LASSO, Table~\ref{label_174} for NNLS, Table~\ref{label_175} for L1LR, Table~\ref{label_176} for RR and Table~\ref{label_177} for EN. For each method, the convergence results are presented in two columns. The column on the left gives the number of \emph{outer} iterations until convergence was attained whereas the column on the right contains the average number of \emph{inner} iterations expended per outer iteration.

Performance is also analyzed using wall-clock running times. Again, for each method we provide in a left-hand column the total time expended until convergence (in seconds) and in a right-hand column the average running time of each outer iteration (in miliseconds). The times are listed in Table~\ref{label_178} for LASSO, Table~\ref{label_179} for NNLS, Table~\ref{label_180} for L1LR, Table~\ref{label_181} for RR and Table~\ref{label_182} for EN.

We have set the initial Lipschitz constant estimate $L_0$ for all methods to be $L_f$ whereas the line-search parameters are $r_u = 2$ and $r_d = 0.9$, as recommended by \cite{ref_003}. For GMM these parameters are shared across both the Lipschitz search and step size adjustment procedures. For ASRC and ASRM, the restart parameters are $D = e^{-2}$ and $s = 4$, as recommended by our analysis in Subsubsection~\ref{label_148}.

The inner problem for each gradient method with memory employed was always solved using a constrained version of the Fast Gradient Method~\cite{ref_013}, with the target accuracy set to $\delta = 10^{-9}$. Note that $\delta$ is absolute whereas the outer problem $\epsilon$ is relative. On all five test problems, the inner problem target accuracy is actually at least two orders greater than the outer one in absolute terms.

\begin{table}[t]
\caption{Iteration performance on LASSO. For each method, the left column lists the number of outer iterations until convergence while the right column lists the average number of inner interations per outer iteration.}
\label{label_173}
\centering \footnotesize
\begin{tabular}{|r|rr|rr|rr|rr|rr|rr|} \hline
m & \tcol{GMM-C} & \tcol{GMM-M} & \tcol{AGMM-C} & \tcol{AGMM-M} & \tcol{ASRC} & \tcol{ASRM} \\ \hline
1 & 1024 & 0.00 & 1024 & 0.00 & 368 & 0.00 & 368 & 0.00 & 245 & 0.00 & 245 & 0.00 \\
2 & 628 & 51.43 & 837 & 39.00 & 821 & 21.97 & 821 & 21.97 & 272 & 20.64 & 272 & 20.64 \\
4 & 483 & 55.15 & 709 & 48.96 & 1683 & 21.99 & 1496 & 21.74 & 252 & 21.13 & 254 & 20.94 \\
8 & 479 & 55.32 & 652 & 49.22 & 1581 & 21.97 & 2128 & 21.75 & 251 & 21.21 & 185 & 21.26 \\
16 & 479 & 57.30 & 589 & 54.02 & 2032 & 21.98 & 1288 & 21.97 & 203 & 21.13 & 182 & 21.15 \\
32 & 474 & 61.72 & 506 & 57.88 & 2373 & 21.98 & 981 & 21.98 & 257 & 21.23 & 254 & 21.22 \\
64 & 467 & 71.00 & 500 & 68.36 & 2306 & 21.98 & 1166 & 21.98 & 242 & 21.18 & 241 & 21.36 \\
128 & 479 & 89.72 & 484 & 88.72 & 1475 & 21.98 & 973 & 21.98 & 234 & 21.44 & 235 & 21.44 \\ \hline
\end{tabular}
\vspace{3mm}
\captionof{table}{Iteration performance on NNLS}
\label{label_174}
\centering \footnotesize
\begin{tabular}{|r|rr|rr|rr|rr|rr|rr|} \hline
m & \tcol{GMM-C} & \tcol{GMM-M} & \tcol{AGMM-C} & \tcol{AGMM-M} & \tcol{ASRC} & \tcol{ASRM} \\ \hline
1 & 568 & 0.00 & 568 & 0.00 & 261 & 0.00 & 261 & 0.00 & 180 & 0.00 & 180 & 0.00 \\
2 & 401 & 18.37 & 513 & 17.32 & 601 & 21.78 & 601 & 21.78 & 150 & 19.18 & 150 & 19.18 \\
4 & 340 & 21.72 & 440 & 19.38 & 2047 & 21.70 & 896 & 21.47 & 111 & 20.59 & 150 & 20.22 \\
8 & 359 & 23.19 & 392 & 19.24 & 1990 & 21.72 & 888 & 21.54 & 130 & 20.79 & 119 & 19.24 \\
16 & 341 & 25.59 & 378 & 23.78 & 2031 & 21.61 & 548 & 21.90 & 132 & 20.81 & 115 & 20.63 \\
32 & 350 & 26.82 & 338 & 25.93 & 2191 & 21.89 & 564 & 21.95 & 131 & 20.97 & 134 & 20.99 \\
64 & 354 & 33.66 & 357 & 33.35 & 1117 & 21.97 & 614 & 21.96 & 139 & 21.03 & 138 & 21.02 \\
128 & 351 & 54.10 & 359 & 53.25 & 986 & 21.97 & 534 & 21.95 & 164 & 21.31 & 164 & 21.31 \\ \hline
\end{tabular}
\vspace{3mm}
\captionof{table}{Iteration performance on L1LR}
\label{label_175}
\centering \footnotesize
\begin{tabular}{|r|rr|rr|rr|rr|rr|rr|} \hline
m & \tcol{GMM-C} & \tcol{GMM-M} & \tcol{AGMM-C} & \tcol{AGMM-M} & \tcol{ASRC} & \tcol{ASRM} \\ \hline
1 & 156 & 0.00 & 156 & 0.00 & 159 & 0.00 & 159 & 0.00 & 114 & 0.00 & 114 & 0.00 \\
2 & 135 & 24.04 & 149 & 21.09 & 568 & 21.95 & 568 & 21.95 & 148 & 20.89 & 148 & 20.89 \\
4 & 120 & 28.21 & 161 & 36.22 & 2668 & 21.95 & 788 & 21.97 & 142 & 20.55 & 141 & 21.45 \\
8 & 121 & 28.59 & 138 & 32.09 & 2927 & 21.66 & 771 & 21.97 & 132 & 20.86 & 127 & 21.44 \\
16 & 117 & 58.68 & 148 & 36.12 & 2967 & 21.99 & 606 & 21.96 & 130 & 21.49 & 115 & 21.62 \\
32 & 116 & 64.36 & 135 & 54.24 & 748 & 21.97 & 413 & 21.95 & 133 & 21.50 & 112 & 21.61 \\
64 & 116 & 68.26 & 117 & 66.47 & 652 & 21.97 & 350 & 21.94 & 106 & 21.58 & 108 & 21.59 \\
128 & 116 & 69.07 & 116 & 69.07 & 462 & 21.95 & 349 & 21.94 & 106 & 21.58 & 106 & 21.58 \\ \hline
\end{tabular}
\vspace{3mm}
\captionof{table}{Iteration performance on RR}
\label{label_176}
\centering \footnotesize
\begin{tabular}{|r|rr|rr|rr|rr|rr|rr|} \hline
m & \tcol{GMM-C} & \tcol{GMM-M} & \tcol{AGMM-C} & \tcol{AGMM-M} & \tcol{ASRC} & \tcol{ASRM} \\ \hline
1 & 2248 & 0.00 & 2248 & 0.00 & 228 & 0.00 & 228 & 0.00 & 311 & 0.00 & 311 & 0.00 \\
2 & 1428 & 54.22 & 1482 & 42.27 & 189 & 21.88 & 189 & 21.88 & 314 & 21.18 & 314 & 21.18 \\
4 & 878 & 66.51 & 1299 & 52.40 & 178 & 21.88 & 184 & 21.88 & 257 & 21.49 & 314 & 21.40 \\
8 & 799 & 66.91 & 1281 & 58.40 & 177 & 21.88 & 180 & 21.88 & 301 & 21.42 & 316 & 21.44 \\
16 & 806 & 69.23 & 1287 & 62.65 & 178 & 21.88 & 180 & 21.88 & 297 & 21.41 & 309 & 21.43 \\
32 & 809 & 69.63 & 1072 & 65.88 & 183 & 21.88 & 183 & 21.88 & 304 & 21.57 & 305 & 21.50 \\
64 & 818 & 77.73 & 861 & 74.39 & 190 & 21.88 & 191 & 21.88 & 423 & 21.44 & 308 & 21.57 \\
128 & 798 & 90.76 & 814 & 86.90 & 206 & 21.89 & 206 & 21.89 & 381 & 21.54 & 382 & 21.54 \\ \hline
\end{tabular}
\vspace{3mm}
\captionof{table}{Iteration performance on EN}
\label{label_177}
\centering \footnotesize
\begin{tabular}{|r|rr|rr|rr|rr|rr|rr|} \hline
m & \tcol{GMM-C} & \tcol{GMM-M} & \tcol{AGMM-C} & \tcol{AGMM-M} & \tcol{ASRC} & \tcol{ASRM} \\ \hline
1 & 823 & 0.00 & 823 & 0.00 & 159 & 0.00 & 159 & 0.00 & 197 & 0.00 & 197 & 0.00 \\
2 & 487 & 66.27 & 705 & 59.85 & 145 & 21.85 & 145 & 21.85 & 182 & 21.03 & 182 & 21.03 \\
4 & 401 & 75.91 & 612 & 69.42 & 134 & 21.84 & 141 & 21.84 & 151 & 21.13 & 164 & 21.20 \\
8 & 416 & 74.77 & 532 & 73.85 & 136 & 21.84 & 138 & 21.84 & 148 & 21.26 & 154 & 21.14 \\
16 & 404 & 76.31 & 465 & 76.64 & 145 & 21.85 & 145 & 21.85 & 151 & 21.27 & 152 & 21.28 \\
32 & 408 & 83.79 & 398 & 83.13 & 148 & 21.85 & 148 & 21.85 & 153 & 21.28 & 155 & 21.29 \\
64 & 409 & 95.54 & 426 & 92.49 & 167 & 21.87 & 167 & 21.87 & 167 & 21.34 & 166 & 21.34 \\
128 & 404 & 118.61 & 389 & 118.59 & 170 & 21.87 & 170 & 21.87 & 193 & 21.54 & 193 & 21.54 \\ \hline
\end{tabular}
\end{table}

\begin{table}[t]
\caption{Running time performance on LASSO. For each method, the left column lists the total wall-clock running time until convergence (in seconds) while the right column lists the average running time per outer iteration (in miliseconds).}
\label{label_178}
\centering \footnotesize
\begin{tabular}{|r|rr|rr|rr|rr|rr|rr|} \hline
m & \tcol{GMM-C} & \tcol{GMM-M} & \tcol{AGMM-C} & \tcol{AGMM-M} & \tcol{ASRC} & \tcol{ASRM} \\ \hline
1 & 1.21 & 1.18 & 1.20 & 1.18 & 0.48 & 1.30 & 0.48 & 1.30 & 0.32 & 1.30 & 0.32 & 1.30 \\
2 & 0.76 & 1.21 & 1.00 & 1.20 & 1.08 & 1.31 & 1.08 & 1.31 & 0.36 & 1.31 & 0.36 & 1.31 \\
4 & 0.59 & 1.22 & 0.85 & 1.20 & 2.22 & 1.32 & 1.97 & 1.32 & 0.33 & 1.32 & 0.33 & 1.31 \\
8 & 0.59 & 1.23 & 0.80 & 1.22 & 2.10 & 1.33 & 2.83 & 1.33 & 0.33 & 1.32 & 0.24 & 1.32 \\
16 & 0.60 & 1.26 & 0.74 & 1.25 & 2.74 & 1.35 & 1.74 & 1.35 & 0.27 & 1.33 & 0.24 & 1.34 \\
32 & 0.62 & 1.31 & 0.66 & 1.30 & 3.28 & 1.38 & 1.37 & 1.40 & 0.35 & 1.36 & 0.34 & 1.36 \\
64 & 0.67 & 1.43 & 0.71 & 1.41 & 3.40 & 1.47 & 1.78 & 1.53 & 0.34 & 1.40 & 0.34 & 1.39 \\
128 & 0.80 & 1.68 & 0.81 & 1.68 & 2.64 & 1.79 & 1.83 & 1.89 & 0.34 & 1.45 & 0.34 & 1.46 \\ \hline
\end{tabular}
\vspace{3mm}
\captionof{table}{Running time performance on NNLS}
\label{label_179}
\centering \footnotesize
\begin{tabular}{|r|rr|rr|rr|rr|rr|rr|} \hline
m & \tcol{GMM-C} & \tcol{GMM-M} & \tcol{AGMM-C} & \tcol{AGMM-M} & \tcol{ASRC} & \tcol{ASRM} \\ \hline
1 & 2.79 & 4.92 & 2.79 & 4.90 & 1.42 & 5.45 & 1.42 & 5.45 & 0.98 & 5.43 & 0.98 & 5.43 \\
2 & 2.01 & 5.01 & 2.56 & 4.99 & 3.29 & 5.48 & 3.29 & 5.48 & 0.82 & 5.49 & 0.82 & 5.48 \\
4 & 1.71 & 5.02 & 2.21 & 5.02 & 11.23 & 5.49 & 4.91 & 5.48 & 0.61 & 5.49 & 0.82 & 5.49 \\
8 & 1.80 & 5.03 & 1.97 & 5.02 & 10.95 & 5.50 & 4.88 & 5.50 & 0.71 & 5.46 & 0.65 & 5.49 \\
16 & 1.73 & 5.06 & 1.91 & 5.07 & 11.23 & 5.53 & 3.03 & 5.52 & 0.73 & 5.51 & 0.63 & 5.49 \\
32 & 1.80 & 5.14 & 1.72 & 5.10 & 12.24 & 5.59 & 3.15 & 5.59 & 0.72 & 5.51 & 0.74 & 5.53 \\
64 & 1.86 & 5.25 & 1.87 & 5.24 & 6.42 & 5.75 & 3.53 & 5.75 & 0.78 & 5.59 & 0.77 & 5.61 \\
128 & 1.93 & 5.51 & 1.98 & 5.51 & 6.05 & 6.14 & 3.27 & 6.13 & 0.93 & 5.65 & 0.93 & 5.65 \\ \hline
\end{tabular}
\vspace{3mm}
\captionof{table}{Running time performance on L1LR}
\label{label_180}
\centering \footnotesize
\begin{tabular}{|r|rr|rr|rr|rr|rr|rr|} \hline
m & \tcol{GMM-C} & \tcol{GMM-M} & \tcol{AGMM-C} & \tcol{AGMM-M} & \tcol{ASRC} & \tcol{ASRM} \\ \hline
1 & 0.15 & 0.99 & 0.15 & 0.96 & 0.19 & 1.17 & 0.17 & 1.05 & 0.12 & 1.05 & 0.12 & 1.05 \\
2 & 0.14 & 1.01 & 0.14 & 0.97 & 0.61 & 1.07 & 0.61 & 1.07 & 0.16 & 1.06 & 0.16 & 1.06 \\
4 & 0.12 & 1.01 & 0.16 & 0.97 & 2.89 & 1.08 & 0.85 & 1.08 & 0.15 & 1.06 & 0.15 & 1.07 \\
8 & 0.12 & 1.02 & 0.14 & 0.99 & 3.21 & 1.10 & 0.84 & 1.09 & 0.14 & 1.08 & 0.14 & 1.07 \\
16 & 0.13 & 1.07 & 0.15 & 1.02 & 3.35 & 1.13 & 0.68 & 1.13 & 0.14 & 1.09 & 0.13 & 1.09 \\
32 & 0.13 & 1.14 & 0.15 & 1.09 & 0.90 & 1.20 & 0.49 & 1.20 & 0.15 & 1.13 & 0.13 & 1.13 \\
64 & 0.14 & 1.24 & 0.14 & 1.20 & 0.89 & 1.37 & 0.47 & 1.34 & 0.12 & 1.17 & 0.13 & 1.17 \\
128 & 0.15 & 1.29 & 0.15 & 1.25 & 0.78 & 1.70 & 0.57 & 1.63 & 0.13 & 1.19 & 0.13 & 1.19 \\ \hline
\end{tabular}
\vspace{3mm}
\captionof{table}{Running time performance on RR}
\label{label_181}
\centering \footnotesize
\begin{tabular}{|r|rr|rr|rr|rr|rr|rr|} \hline
m & \tcol{GMM-C} & \tcol{GMM-M} & \tcol{AGMM-C} & \tcol{AGMM-M} & \tcol{ASRC} & \tcol{ASRM} \\ \hline
1 & 2.71 & 1.20 & 2.70 & 1.20 & 0.30 & 1.30 & 0.30 & 1.30 & 0.41 & 1.30 & 0.41 & 1.30 \\
2 & 1.74 & 1.22 & 1.79 & 1.21 & 0.25 & 1.31 & 0.25 & 1.31 & 0.41 & 1.31 & 0.41 & 1.31 \\
4 & 1.09 & 1.24 & 1.59 & 1.22 & 0.23 & 1.32 & 0.24 & 1.31 & 0.34 & 1.31 & 0.41 & 1.32 \\
8 & 1.00 & 1.25 & 1.59 & 1.24 & 0.23 & 1.32 & 0.24 & 1.32 & 0.40 & 1.32 & 0.42 & 1.32 \\
16 & 1.03 & 1.28 & 1.63 & 1.27 & 0.24 & 1.34 & 0.24 & 1.33 & 0.40 & 1.34 & 0.41 & 1.34 \\
32 & 1.08 & 1.33 & 1.42 & 1.32 & 0.25 & 1.36 & 0.25 & 1.36 & 0.41 & 1.36 & 0.41 & 1.36 \\
64 & 1.19 & 1.46 & 1.25 & 1.45 & 0.27 & 1.40 & 0.27 & 1.40 & 0.59 & 1.40 & 0.43 & 1.40 \\
128 & 1.38 & 1.73 & 1.41 & 1.73 & 0.31 & 1.49 & 0.31 & 1.49 & 0.56 & 1.47 & 0.57 & 1.48 \\ \hline
\end{tabular}
\vspace{3mm}
\captionof{table}{Running time performance on EN}
\label{label_182}
\centering \footnotesize
\begin{tabular}{|r|rr|rr|rr|rr|rr|rr|} \hline
m & \tcol{GMM-C} & \tcol{GMM-M} & \tcol{AGMM-C} & \tcol{AGMM-M} & \tcol{ASRC} & \tcol{ASRM} \\ \hline
1 & 4.17 & 5.07 & 4.11 & 4.99 & 0.86 & 5.42 & 0.86 & 5.43 & 1.07 & 5.44 & 1.07 & 5.44 \\
2 & 2.47 & 5.06 & 3.54 & 5.02 & 0.79 & 5.47 & 0.79 & 5.47 & 1.00 & 5.48 & 1.00 & 5.48 \\
4 & 2.04 & 5.08 & 3.09 & 5.04 & 0.73 & 5.47 & 0.77 & 5.47 & 0.83 & 5.48 & 0.90 & 5.48 \\
8 & 2.13 & 5.11 & 2.70 & 5.08 & 0.74 & 5.47 & 0.75 & 5.46 & 0.81 & 5.47 & 0.84 & 5.47 \\
16 & 2.08 & 5.15 & 2.39 & 5.14 & 0.79 & 5.48 & 0.79 & 5.48 & 0.83 & 5.51 & 0.84 & 5.50 \\
32 & 2.14 & 5.24 & 2.08 & 5.23 & 0.82 & 5.53 & 0.82 & 5.53 & 0.85 & 5.53 & 0.86 & 5.56 \\
64 & 2.22 & 5.42 & 2.30 & 5.40 & 0.94 & 5.61 & 0.94 & 5.61 & 0.93 & 5.56 & 0.92 & 5.57 \\
128 & 2.34 & 5.80 & 2.25 & 5.79 & 0.97 & 5.70 & 0.97 & 5.70 & 1.10 & 5.69 & 1.10 & 5.69 \\ \hline
\end{tabular}
\end{table}
\clearpage

Eliminating $\delta$ from the convergence analysis allows us to better manage the auxiliary problem overhead by placing bound on the number of inner iterations. For GMM, we chose the limit $N_{\operatorname{inner}} = 1000$. The number of middle iterations was unrestricted for GMM but for all variants of AGMM we have decided that the middle method must employ at most $N_N = 2$ Newton iterations, with the number of inner iterations limited to $N_{\operatorname{inner}} = 11$, thus targeting an average of $10$ inner iterations per middle iteration. We found that increasing either $N_N$ or $N_{\operatorname{inner}}$ beyond this point did not significantly increase performance.

On all test problems, GMM benefits most substantially from the increase in bundle size. For GMM-M, this dependence is very clear, with larger bundle sizes almost always leading to a decreased number of outer iterations until convergence is reached (thereafter only referred to as iterations). This improvement is confirmed by the running times, although for very high bundle sizes the performance benefits diminish due to the high cost of setting up the inner problem. For GMM-C, most gains, both in iterations and in running time, are obtained when the bundle size is in the range $4$ to $32$. These gains are quite remarkable, with the number of iterations and running times halved for LASSO, RR and EN and reduced by around $40\%$ on NNLS. Not only is GMM-C more efficient in terms of bundle size than GMM-M, but the number of outer iterations, again reflected in running times, was consistently lower than for GMM-M on all tested problems. This is rather unexpected, as it contrasts with the preliminary results pertaining to the original GMM on smooth problems~\cite{ref_018}.

More surprisingly, AGMM without restarts does not benefit from using the bundle on the non-strongly convex problems LASSO, NNLS and L1LR. In fact, for AGMM-C small bundles may increase the number of iterations as well as running times by almost an order of magnitude as compared to ACGM and up to $5$-fold when compared to AA. The performance penalty is smaller for AGMM-M, but remains nonetheless prohibitive. Quite the opposite occurs on the strongly convex problems RR and EN, where AGMM-C attains an iteration and running time decrease of around $15\%$ for small bundles, particularly for the size $8$. AGMM-M performs slightly poorer. These results are in concordance with the ones obtained on Exact Gradient Methods with Memory in \cite{ref_005}.

Restart strategies not only mitigate the penalties but actually lead to substantial performance improvements on all problems. On the non-strongly convex ones, the best performance is attained with MRS, with the exception is L1LR, where no more that two restarts are triggered for all strategies.
Therefore, this problem can be considered an outlier. On the strongly convex problems, Restarting AGMM (R-AGMM) lags behind AGMM. This is however due to the former not having access to the strong convexity parameter, which it instead needs to estimate from oracle calls. Under these conditions, the performance of R-AGMM is actually very good. We also notice that similarly to AGMM, R-AGMM converges faster with CRS than with MRS.

In terms of per-iteration complexity, increasing the bundle up to the size $16$ has a negligible impact (less than $5\%$ in running time) on all test problems both for the accelerated methods, where the number of inner iterations per outer iteration is severely restricted, as well as for the fixed-point schemes. This is in perfect agreement with our analysis in Subsections \ref{label_075} and \ref{label_076_overhead}. Bundle sizes beyond $32$ increase the per outer iteration cost, mostly due to the inner problem setup overhead, and partially negate the performance gains made by the bundle. Interestingly, the per iteration running time of the fixed-point schemes is around $10\%$ lower than the corresponding accelerated methods. The main reason is that fixed-point schemes reuse gradient information during backtracks. Despite employing two line-search procedures, this low backtrack overhead manages to compensate for the higher frequency of backtracks.

Based on these preliminary simulation results, we can select a collection of parameters that is small enough to allow the display of convergence rate plots of our methods alongside state-of-the-art schemes. We plot the convergence behavior of our GMM-C, AGMM-M and ASRM, as well as that of FISTA~\cite{ref_002}, AMGS~\cite{ref_014}, ACGM~\cite{ref_008,ref_009} and R-ACGM (Algorithm~\ref{label_154} applied to ACGM) on the non-strongly convex problems LASSO, NNLS and L1LR. On RR and EN, we replace AGMM-M with AGMM-C and ASRM with ASRC. The most consistent performance gains were noticed for $m = 16$ in our previous simulations and we choose this bundle size to plot all methods with memory in our benchmark. The plots are shown for LASSO, NNLS and L1LR in Figure~\ref{label_183} whereas for RR, EN they are shown in Figure~\ref{label_184} .

\begin{figure*}[t] \centering \footnotesize
\begin{minipage}[t]{0.43\linewidth} \centering
\includegraphics[width=\textwidth]{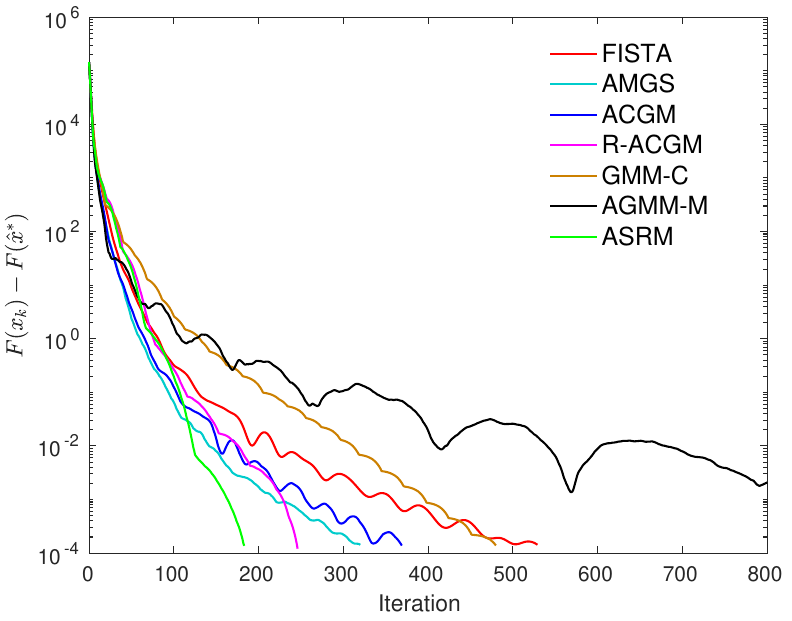}
(a) Convergence rates in iterations on LASSO
\end{minipage}
\hspace{2mm}
\begin{minipage}[t]{0.43\linewidth} \centering
\includegraphics[width=\textwidth]{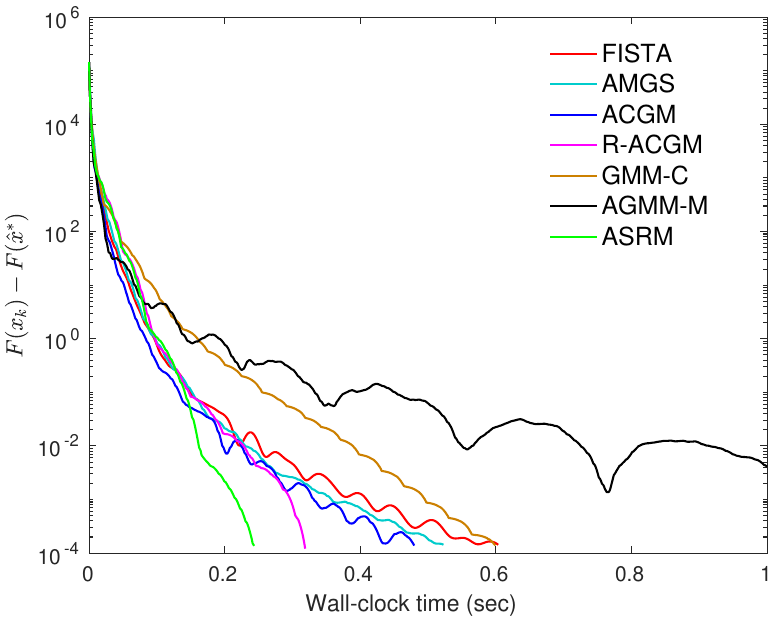}
(b) Convergence rates in wall-clock time on LASSO
\end{minipage}
\vspace{2mm}
\begin{minipage}[t]{0.43\linewidth} \centering
\includegraphics[width=\textwidth]{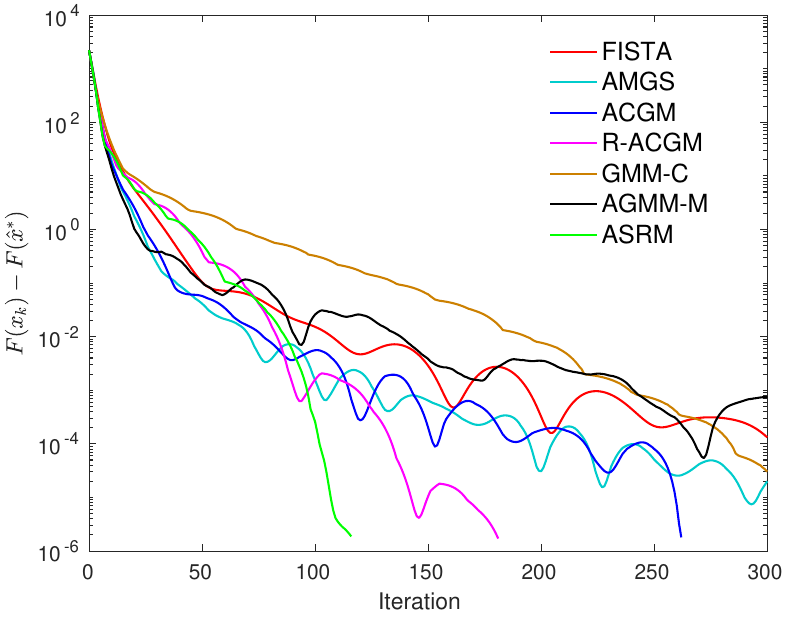}
(c) Convergence rates in iterations on NNLS
\end{minipage}
\hspace{2mm}
\begin{minipage}[t]{0.43\linewidth} \centering
\includegraphics[width=\textwidth]{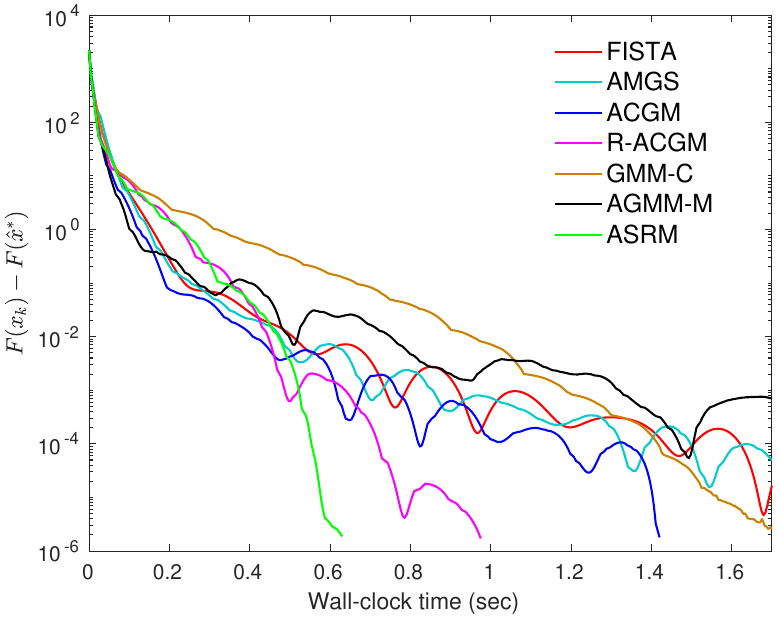}
(d) Convergence rates in wall-clock time on NNLS
\end{minipage}
\vspace{2mm}
\begin{minipage}[t]{0.43\linewidth} \centering
\includegraphics[width=\textwidth]{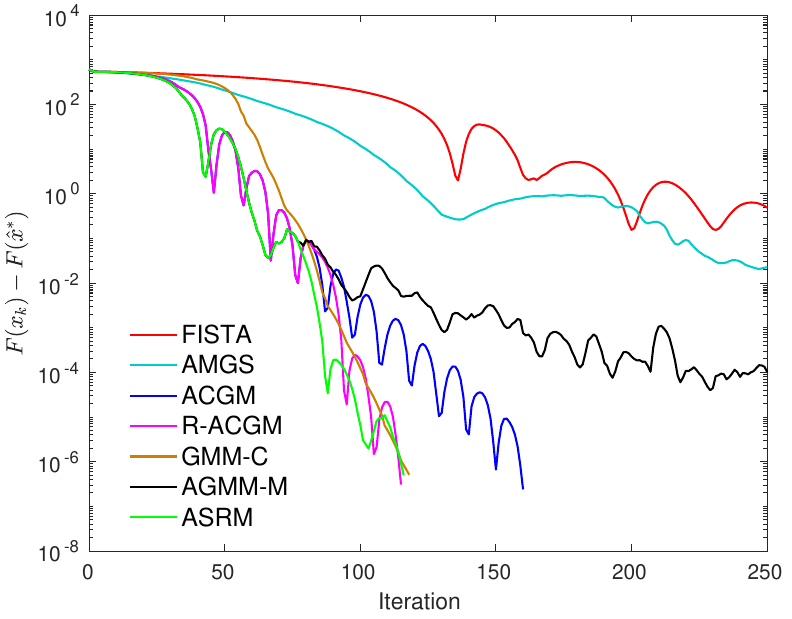}
(e) Convergence rates in iterations on L1LR
\end{minipage}
\hspace{2mm}
\begin{minipage}[t]{0.43\linewidth} \centering
\includegraphics[width=\textwidth]{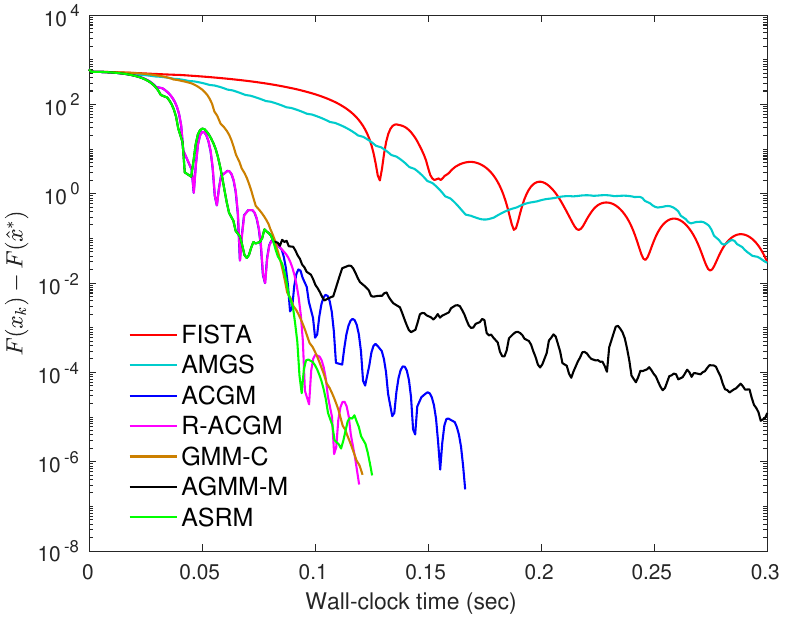}
(f) Convergence rates in wall-clock time on L1LR
\end{minipage}
\caption{Convergence results of GMM-C, AGMM-M and ASRM (all with $m = 16$), as well as those of FISTA, AMGS, ACGM and R-ACGM on the non-strongly convex test problems}
\label{label_183}
\end{figure*}

\begin{figure*}[t] \centering \footnotesize
\begin{minipage}[t]{0.43\linewidth} \centering
\includegraphics[width=\textwidth]{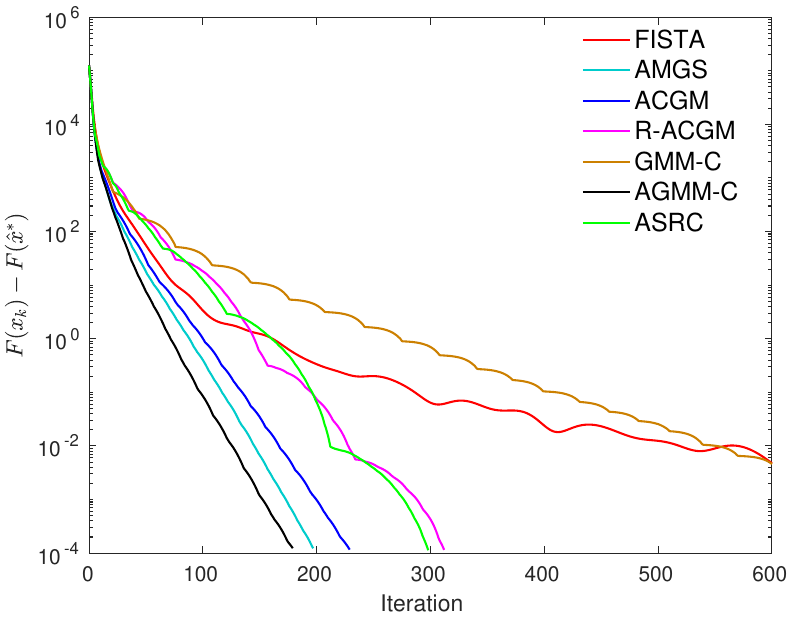}
(a) Convergence rates in iterations on RR
\end{minipage}
\hspace{2mm}
\begin{minipage}[t]{0.43\linewidth} \centering
\includegraphics[width=\textwidth]{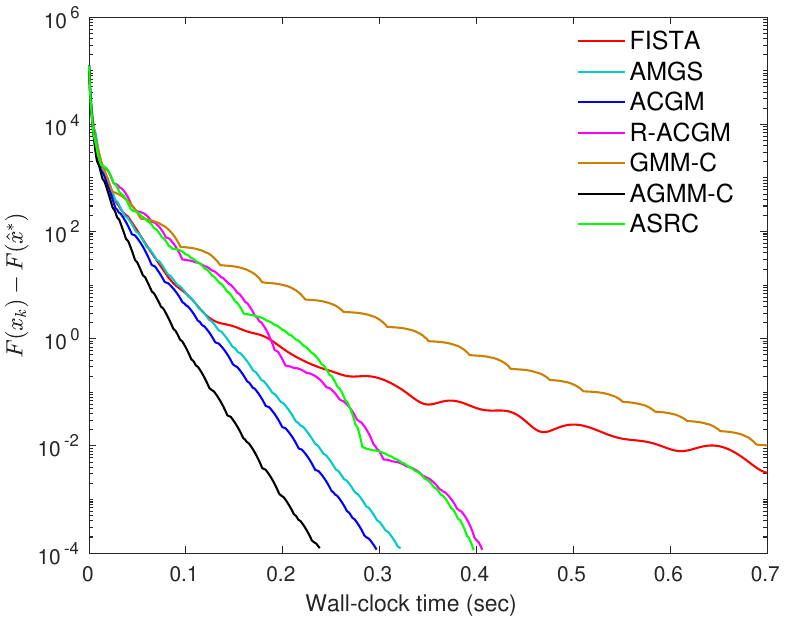}
(b) Convergence rates in wall-clock time on RR
\end{minipage}
\vspace{2mm}
\begin{minipage}[t]{0.43\linewidth} \centering
\includegraphics[width=\textwidth]{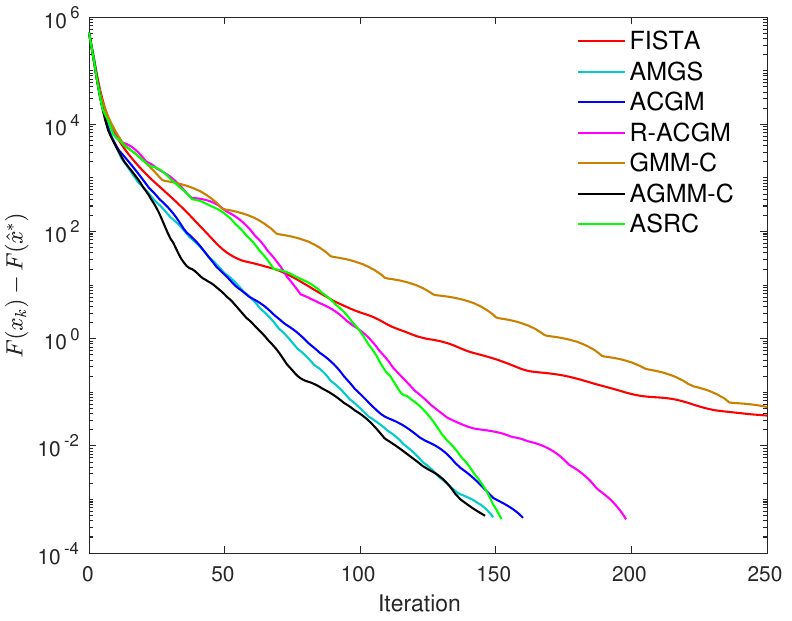}
(c) Convergence rates in iterations on EN
\end{minipage}
\hspace{2mm}
\begin{minipage}[t]{0.43\linewidth} \centering
\includegraphics[width=\textwidth]{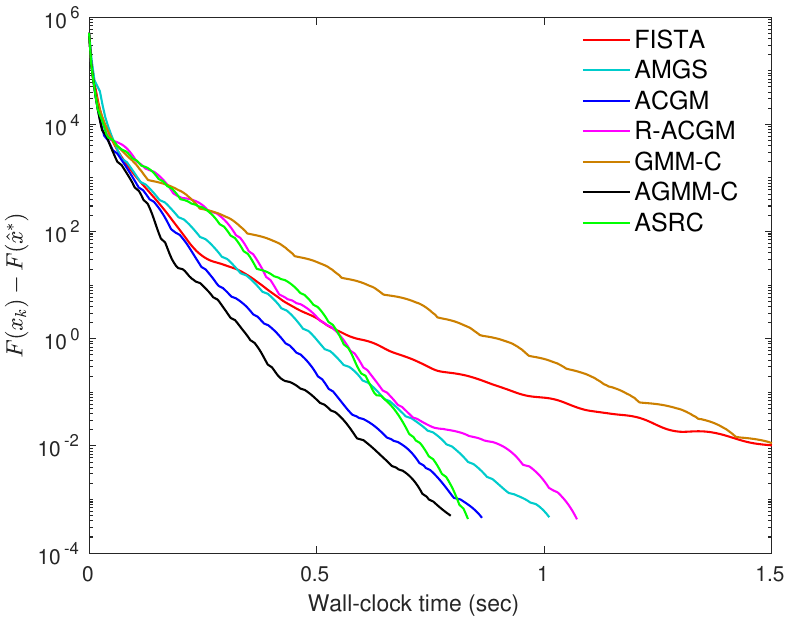}
(d) Convergence rates in wall-clock time on EN
\end{minipage}
\caption{Convergence results of GMM-C, AGMM-C and ASRC (all with $m = 16$) as well as those of FISTA, AMGS, ACGM and R-ACGM on the strongly convex test problems}
\label{label_184}
\end{figure*}

On non-strongly convex problems LASSO and NNLS, ASRM dominates all other methods, followed by R-ACGM. L1LR is an outlier problem where restarts are rarely triggered and the bundle benefits are limited, which is why both ASRC and R-ACGM have a similar convergence behavior. Restarting frequently does seem to be effective on L1LR by looking at the remarkable performance of GMM-C. Performing a restart at every iteration by design, GMM-C is able to match ASRC and R-ACGM on this problem, both in iterations and in running time. GMM-C even manages to compete with FISTA on LASSO and NNLS. By contrast, AGMM-M converges very slowly on every problem it is employed, lagging behind its memory-less counterpart ACGM, which is a top performer among the state-of-the-art.

On the strongly convex problems, AGMM-C is the most effective method. Employing the bundle enables it to clearly surpass ACGM. ASRC performs very well considering that it needs to estimate the strong convexity parameter and on EN it almost matches AGMM-C in performance. ASRC also evidently surpasses the memory-less R-ACGM. GMM-C however is not able to compete here. It is able to take advantage of strong convexity but the rate of convergence is much poorer compared to the accelerated methods. GMM-C surpasses only FISTA, which is unable to utilize strong convexity in any way to increase performance. AMGS performs very well in iterations on all problems except L1LR, approaching ASRM on LASSO and ASRC on RR and EN, but its expensive iterations cause it to lag behind these methods when measuring performance using wall-clock running times.

\section{Discussion} \label{label_185}

In this work we have introduced several efficient Gradient Methods with Memory, specifically designed to address composite problems. All are multi-level schemes, having in common an outer (main) scheme indirectly calling an inner method that solves the auxiliary problem generated by the bundle. The efficiency of all these methods is driven by the model design and initialization, which ensure that the auxiliary (inner) problem can be solved without making any calls to the oracle and that the effects of inexactly solving the inner problem are eliminated from the convergence analysis. The removal of oracle calls from the inner problem has been achieved by adding composite gradients to the bundle. The gradients used in the original GMM~\cite{ref_018} include information only on the smooth part $f$ but the composite gradients also capture information on $\Psi$. The composite gradient data collection imposes no oracle overhead and the bundle can be constructed solely from information already collected to advance the algorithm. Moreover, if the inner method is initialized with the solution obtained by the outer scheme without the bundle, the inner scheme will produce a solution no worse than the starting point thus negating the effects of the solution's inexactness on the outer scheme.

Using this efficient configuration, we have proposed a fixed-point scheme in Algorithm~\ref{label_030}, which we also denote as the Gradient Method with Memory (GMM) for composite objectives. Because the quality of the inner problem solution is no worse (and in practice significantly better) than the one of its memory-less counterpart GM, we can increase the step size to a much higher level than that of GM. On the other hand, the model contains composite gradients that also incorporate a Lipschitz constant estimate. We thus distinguish two distinct line-search procedures: an initial Lipschitz search and a subsequent step size search. The step size search constitutes a middle scheme that repeatedly calls the inner scheme until its stopping criterion is satisfied. The evaluation of the stopping criterion for the middle scheme does however require oracle calls, a drawback inherited from the original GMM formulation in \cite{ref_018}.

On the other hand, with inner solution inexactness negated, we can apply acceleration to GMM to obtain an efficient Accelerated Gradient Method with Memory (AGMM), as listed in Algorithm~\ref{label_093}, that does not exhibit error accumulation~\cite{ref_004}. By incorporating the bundle in the estimate functions, we attain the optimal worst-case rate on the problem class along with the ability to increase the convergence guarantees beyond this level. The convergence guarantee increase procedure in Algorithm~\ref{label_086} becomes a middle method for AGMM, taking on the role of the step size search procedure in GMM. Unlike GMM, the middle problem in AGMM can be solved using Newton's rootfinding algorithm without making any calls to the oracle. Newton's method is further parameter-free and its efficiency dramatically lowers the number of inner iterations required to capture the performance benefit induced by the bundle. Thus, apart from the complexity of the inner problem setup, occurring exactly once per outer iteration, the overhead introduced by the bundle can be shown to be virtually non-existent.

We have also studied how our methods can be altered to take advantage of additional problem structure, in particular strong convexity and relaxations of this condition such as the quadratic functional growth (QFG) property. GMM can exploit both strong convexity and QFG by design, with only minor modifications needed to account for strong convexity in the regularizer (see \eqref{label_067} through \eqref{label_068}). The rates obtained for strong convexity also apply to a larger class of problems where the bundle can be shown to satisfy \eqref{label_057} in Proposition~\ref{label_033_sc}. This property resembles quasi-strong convexity~\cite{ref_011}. Determining its exact scope and finding relevant practical non-strongly convex applications is left as a topic for future research. Despite GMM's flexibility, the worst-case rates on the entire class, as well as the subclasses studied, remain far below the optimal ones.

Much better rates can be obtained for AGMM on these subclasses but require more substantial alterations. A generalized version of AGMM that makes use of known strong convexity, listed in Algorithm~\ref{label_119}, incorporates strongly convex lower bounds in its estimate functions, following a procedure previously described in the state-of-the-art method ACGM~\cite{ref_008,ref_009}. However, to be able to increase the convergence guarantee beyond that of ACGM, as shown in Algorithm~\ref{label_086-sc-method}, we must restrict ourselves to unaugmented estimate functions. Interestingly, the main convergence result in Theorem~\ref{label_127} only requires known strong convexity in the regularizer, which can be readily attained due to the simplicity of $\Psi$. The lower bounds aggregated into the estimate functions need not be global, and the convergence rate obtained in Subsubsection~\ref{label_120} remains valid when $f$ has the more general quasi-strong convexity property, provided that all iterates project onto the same optimal point. Remark~1 in \cite{ref_011} describes a subclass of problems that are not strongly convex but satisfy the above conditions.

To deal with QFG, we have turned to restart strategies. The idea of restarting is very general and can be applied to any optimization scheme with a sublinear worst-case rate that satisfies \eqref{label_139}. The need for considering values of the exponent $p$ other than $2$ arises when dealing with universal methods~\cite{ref_015} or future methods dealing with narrower problem classes. For instance, recent developments on higher-order methods in \cite{ref_016,ref_017} may lead to the emergence of new super-fast methods applicable to subclasses of composite problems defined in terms of higher-order conditions. We have shown that for any such algorithm, the best rate can be obtained if the restart occurs when the distance to the optimum measured in function value decreases by a fixed factor $D$. In the known growth parameter (KGP) case, it is straightforward to determine the value of $D$ that gives the best worst-case rate. When the growth parameter is not known, we propose an adaptive restart strategy in Algorithm~\ref{label_154}.

One would expect that the ratio between the number of outer iterations of the restarted scheme necessary for the adaptive scheme to reach the same level of objective accuracy as in the KGP case, a ratio that we denote as the iteration efficiency, to be very small. However, when considering the asymptotic rates, the iteration efficiency for values of $D$ in the range $D^*$ (the optimal value according to our otherwise conservative analysis) to $e^{-p}$ (the optimal value for KGP case derived in \cite{ref_011} that produces good results in our experiments) is over $92\%$ for all $p \geq 2$ when the search parameter $s$ is close to $1$. This increases to $99\%$ for $p \geq 3.5$. In other words, the computational resource penalty of not knowing the growth parameter can be rendered \emph{less than $8\%$} in the long term for \emph{every} restarted every scheme that manages to take advantage of composite problem structure, which is quite remarkable.

Whereas adaptive restart has already been proposed in \cite{ref_001}, our approach has a number of important improvements. It does not impose conditions on the optimization scheme being restarted, aside from sublinear convergence guarantees. We can thus restart methods that are able to dynamically estimate other problem parameters such as the Lipschitz constant of the smooth part gradient. Our analysis does not rely on a target accuracy, being thus applicable to online methods, and acknowledges that the number of adjustments of the wrapper scheme is always bounded, attaining an astoundingly efficient asymptotic rate. Note that we are also able to remove the redundant proximal gradient steps utilized in \cite{ref_001} to place a bound on the composite gradient, resulting in a slightly better overall convergence rate.

When the restart strategy in Algorithm~\ref{label_154} is applied to strong convexity agnostic AGMM in Algorithm~\ref{label_093}, we obtain a Restarting AGMM (R-AGMM). Whereas GMM and AGMM are three-level schemes, R-AGMM becomes a four-level scheme, with the outermost layer (Algorithm~\ref{label_154}) denoted as a wrapper. Its complexity renders R-AGMM a fully adaptive method, being able to simultaneously estimate both the growth parameter as well as the Lipschitz constant. It also possesses one of the highest known worst-case rates on its class, as well as on the entire composite problem class (even when no restarts are necessary) and requires approximately $3$ times (when the parameter $s$ is set very close to $1$) more outer iterations to attain the asymptotic rate of AGMM (Algorithm~\ref{label_119}) on strongly convex objectives. R-AGMM is thus a very robust scheme that is able to achieve state-of-the-art performance on a multitude of important problem classes not just by automatically and dynamically adjusting key geometry parameters, but also by adaptively increasing the convergence guarantees.

We have tested GMM, AGMM and R-AGMM with a variety of parameter choices on a collection of synthetic problems spanning the areas of statistics, inverse problems and machine learning. GMM benefited the most from the bundle having an all-round stable performance, far superior to the worst-case guarantees. Discarding older information (CRS) proved more effective than keeping the composite gradients with the smallest norm (MRS). Not only was the number of iterations required for convergence noticeably small, but the average iteration running time was slightly smaller than its accelerated counterparts. Despite having two separate line-search procedures, fixed-point backtracking kept the line-search overhead at a low level.

On the non-strongly convex problems, AGMM suffered a severe performance penalty. This is most likely due to some form of interference between the bundle based acceleration and the estimate sequence acceleration. The CRS seems to contribute to this penalty more than MRS. The interference is mitigated by restarting, with R-AGMM generally outperforming its memory-less counterpart R-ACGM. Restarting also preserves the beneficial effects of MRS. Soft restarting was found to perform slightly better than hard restarting, but the difference was not found to be significant.

No interference was noticed on the strongly convex problems, with AGMM leading all other methods, partly due to its knowledge of the strong convexity parameter $\mu$. The benefits of employing the bundle were substantial in this case. Restating proved very effective when $\mu$ is not known, R-AGMM being only slightly behind AGMM.

Small bundles captured most performance benefits on all problems for both AGMM and R-AGMM, with the values $m = 8$ and $m = 16$ leading to the fastest convergence. For R-AGMM, our experimental results confirm that the $D = e^{-2}$ is better than the choice $D = 1 / (e + 1)$ made in \cite{ref_001}, with the selection of $s$ being problem specific. Simulations also suggest that when no information on the problem is available beforehand, R-AGMM is most likely to perform well with soft restarting and MRS.

Overall, R-AGMM has proved to be the best method, both theoretically due to its robustness, adaptivity and excellent worst-case rate as well as in practice, converging very quickly on all the test problems, regardless of their structure. Whereas GMM restarts at every iteration, thereby implicitly estimating strong convexity or a relaxed condition, but does not employ acceleration, and AGMM uses the estimate sequence to accelerate but lacks restarts, R-AGMM constitutes a hybrid between the two, cumulating their strengths to attain outstanding efficacy.

\section*{Acknowledgements}

We thank Yurii Nesterov for very helpful discussions and suggestions.

\section*{Funding}

This project has received funding from the European Research Council (ERC) under the European Union's Horizon 2020 research and innovation programme (grant agreement No. 788368).


\begin{thebibliography}{10}

\bibitem{ref_001}
{\sc T.~Alamo, P.~Krupa, and D.~Limon}, {\em Restart {FISTA} with global linear
convergence}, in European Control Conference (ECC), Jun. 2019, Napoli, Italy,
pp.~1969--1974.

\bibitem{ref_002}
{\sc A.~Beck and M.~Teboulle}, {\em A fast iterative shrinkage-thresholding
algorithm for linear inverse problems}, {SIAM} J. Imaging Sci., 2 (2009),
pp.~183--202.

\bibitem{ref_003}
{\sc S.~R. Becker, E.~J. Cand{\`e}s, and M.~C. Grant}, {\em Templates for
convex cone problems with applications to sparse signal recovery}, Math.
Program. Comput., 3 (2011), pp.~165--218.

\bibitem{ref_004}
{\sc O.~Devolder, F.~Glineur, and Y.~Nesterov}, {\em First-order methods of
smooth convex optimization with inexact oracle}, Math. Program., Ser. A, 2014
(2014), pp.~37--75.

\bibitem{ref_005}
{\sc M.~I. Florea}, {\em Exact gradient methods with memory}, CORE Discussion
Papers, 2019 (2019).

\bibitem{ref_006}
{\sc M.~I. Florea}, {\em An efficient accelerated gradient method with memory
applicable to composite problems}, in Intern. Conf. on Optimization of
Electrical \& Electronic Equipment (OPTIM), Sep. 2021, Bra\c{s}ov, Romania.

\bibitem{ref_007}
{\sc M.~I. Florea and S.~A. Vorobyov}, {\em A generalized accelerated composite
gradient method: Uniting {N}esterov's fast gradient method and {FISTA}},
arXiv preprint arXiv:1705.10266, (2017).

\bibitem{ref_008}
{\sc M.~I. Florea and S.~A. Vorobyov}, {\em An accelerated composite gradient
method for large-scale composite objective problems}, {IEEE} Trans. Signal
Process., 67 (2019), pp.~444--459.

\bibitem{ref_009}
{\sc M.~I. Florea and S.~A. Vorobyov}, {\em A generalized accelerated composite
gradient method: Uniting nesterov's fast gradient method and fista}, {IEEE}
Trans. Signal Process., 68 (2020), pp.~3033--3048.

\bibitem{ref_010}
{\sc T.~Hastie, R.~Tibshirani, and M.~Wainwright}, {\em Statistical Learning
with Sparsity: The Lasso and Generalizations}, CRC Press, 2015.

\bibitem{ref_011}
{\sc I.~Necoara, Y.~Nesterov, and F.~Glineur}, {\em Linear convergence of first
order methods for non-strongly convex optimization}, Math. Program., Ser. A,
175 (2019), pp.~69--107, \url{https://doi.org/10.1007/s10107-018-1232-1}.

\bibitem{ref_012}
{\sc Y.~Nesterov}, {\em A method of solving a convex programming problem with
convergence rate $\mathcal{O}(1/k^2)$}, Dokl. Math., 27 (1983), pp.~372--376.

\bibitem{ref_013}
{\sc Y.~Nesterov}, {\em Introductory Lectures on Convex Optimization. Applied
Optimization, vol. 87}, Kluwer Academic Publishers, Boston, MA, 2004.

\bibitem{ref_014}
{\sc Y.~Nesterov}, {\em Gradient methods for minimizing composite functions},
Universit\'e catholique de Louvain, Center for Operations Research and
Econometrics (CORE), CORE Discussion Papers, 140 (2007),
\url{https://doi.org/10.1007/s10107-012-0629-5}.

\bibitem{ref_015}
{\sc Y.~Nesterov}, {\em Universal gradient methods for convex optimization
problems}, Math. Program., 152 (2015), pp.~381--404,
\url{https://doi.org/10.1007/s10107-014-0790-0}.

\bibitem{ref_016}
{\sc Y.~Nesterov}, {\em Inexact accelerated high-order proximal-point methods},
Math. Program., (2021), \url{https://doi.org/10.1007/s10107-021-01727-x}.

\bibitem{ref_017}
{\sc Y.~Nesterov}, {\em Inexact high-order proximal-point methods with
auxiliary search procedure}, {SIAM} J. Optim., 31 (2021), pp.~2807--2828,
\url{https://doi.org/10.1137/20M134705X}.

\bibitem{ref_018}
{\sc Y.~Nesterov and M.~I. Florea}, {\em Gradient methods with memory}, Optim.
Methods Software, (2021), pp.~1--18,
\url{https://doi.org/10.1080/10556788.2020.1858831}.

\bibitem{ref_019}
{\sc B.~O'Donoghue and E.~Cand{\`e}s}, {\em Adaptive restart for accelerated
gradient schemes}, Found. Comput. Math., 15 (2015), pp.~715--732.

\bibitem{ref_020}
{\sc R.~Tibshirani}, {\em Regression shrinkage and selection via the lasso}, J.
R. Stat. Soc. Ser. B. Methodol., 58 (1996), pp.~267--288.

\end{thebibliography}
\end{document}